\renewcommand{\subjclass}[1]{\thanks{\emph{2010 Mathematics Subject Classification:}~#1}}
\renewcommand{\keywords}[1]{\thanks{\emph{Keywords and Phrases:}~#1}}
\renewcommand{\date}{\thanks{\today}}
\newtheorem{theorem}{Theorem}[section]
\newtheorem{lemma}{Lemma}[section]
\newtheorem{corollary}{Corollary}[section]
\newtheorem{proposition}[lemma]{Proposition}
\numberwithin{equation}{section}
\def\teto#1{\setbox\z@\hbox{${#1\vphantom k}$}\hbox{%
 \hbox{\lower2\ex@\hbox{\lower\dp\z@\hbox{\vbox{\hrule
 \hbox{\vrule\hskip2\ex@\vbox{\vskip2\ex@\box\z@\vskip1\ex@}%
 \hskip2\ex@\vrule}}}}}}}
\newcommand{\s}{\mathcal{S}}
\newcommand{\Q}{\mathbb{Q}}
\newcommand{\oQ}{\overline{\mathbb{Q}}}
\newcommand{\Z}{\mathbb{Z}}
\newcommand{\K}{\Bbbk}
\newcommand{\obK}{\overline{\Bbbk}}
\newcommand{\Qq}{\mathbb{Q}}
\newcommand{\OQq}{\overline{\mathbb{Q}}}
\newcommand{\Zz}{\mathbb{Z}}
\newcommand{\x}{\mathbf{x}}
\newcommand{\av}{\mathbf{a}}
\newcommand{\uv}{\mathbf{u}}
\newcommand{\OO}{\mathcal{O}}
\newcommand{\M}{\mathcal{M}}
\newcommand{\F}{\mathcal{F}}
\newcommand{\HH}{\mathcal{H}}
\newcommand{\hh}{\hat{h}}
\newcommand{\vp}{\varphi}
\newcommand{\al}{\alpha}
\newcommand{\be}{\beta}
\newcommand{\de}{\delta}
\newcommand{\oh}{\overline{h}}
\newcommand{\odeg}{\overline{\deg}\,}
\newcommand{\fp}{\frak{p}}
\newcommand{\fa}{\frak{a}}
\newcommand{\til}[1]{\tilde{#1}}
\newcommand{\kdots}{,\ldots ,}
\renewcommand{\eqref}[1]{(\ref{#1})}
\title[Diophantine equations over finitely generated domains]{Effective results for Diophantine equations over finitely generated
domains}
\author[A. B\'erczes]{Attila B\'erczes}
\subjclass{11D41, 11D59, 11D61}
\keywords{Thue equations, hyperelliptic equations, superelliptic equations,
Schinzel-Tijdeman equation, effective results, Diophantine equations over finitely generated domains}
\thanks{The research was supported in part by the Hungarian Academy of
Sciences, and by grants K100339 (A.B., K.G.), NK104208 (A.B., K.G.)
and K75566 (A.B.) of the Hungarian National Foundation for
Scientific Research. The work is supported
by the T\'AMOP 4.2.1./B-09/1/KONV-2010-0007 project. The project
is implemented through the New Hungary Development Plan,
co-financed by the European Social Fund and the European Regional
Development Fund.}
\address{A. B\'erczes \newline
         \indent Institute of Mathematics, University of Debrecen \newline
         \indent H-4010 Debrecen, P.O. Box 12, Hungary}
\email{berczesa\char'100science.unideb.hu}
\author[J.-H. Evertse]{Jan-Hendrik Evertse}
\address{J.-H. Evertse \newline
         \indent Universiteit Leiden, Mathematisch Instituut, \newline
         \indent Postbus 9512, 2300 RA Leiden, The Netherlands}
\email{evertse\char'100math.leidenuniv.nl}
\author[K. Gy\H{o}ry]{K\'{a}lm\'{a}n Gy\H{o}ry}
\address{K. Gy\H{o}ry \newline
         \indent Institute of Mathematics, University of Debrecen \newline
         \indent H-4010 Debrecen, P.O. Box 12, Hungary}
\email{gyory\char'100science.unideb.hu}
\begin{document}
\maketitle

\section{Introduction.}\label{S_Int}

Let $A$ be an arbitrary integral domain of characteristic $0$ that is finitely
generated over $\Zz$.
We consider Thue equations $F(x,y)=\delta$ in $x,y\in A$,
where $F$ is a binary form with coefficients from $A$ and $\delta$ is
a non-zero element from $A$, and hyper- and superelliptic equations
$f(x)=\delta y^m$ in $x,y\in A$, where $f\in A[X]$,
$\delta\in A\setminus\{ 0\}$ and $m\in \Z_{\geq 2}$.

Under the necessary finiteness conditions we give effective upper bounds for the sizes (defined in Section \ref{S_Results}) of the
solutions of the equations in terms of appropriate
representations for $A$, $\delta$, $F$, $f$, $m$.
These results imply that the solutions of these equations can be determined in principle.
Further, we consider the Schinzel-Tijdeman equation
$f(x)=\delta y^m$ where $x,y\in A$ and $m\in\Zz_{\geq 2}$ are the unknowns
and give an effective upper bound for $m$.

We mention that results from the existing literature deal only with
equations over restricted classes of finitely generated domains
whereas we do not have to impose any restrictions on $A$.
Further, our upper bounds for the sizes of the solutions $x,y$
and $m$ are new, also for the special cases considered earlier.
Our proofs are a combination of existing effective results for Thue equations
and hyper- and superelliptic equations over number fields and over function
fields, and a recent effective specialization method of
Evertse and Gy\H{o}ry \cite{EGy9}.

We give a brief overview of earlier results.
A major breakthrough in the effective theory of Diophantine
equations was established by A. Baker in the 1960's.
Using his own estimates for linear forms in logarithms of
algebraic numbers,
he obtained effective finiteness results, i.e.,
with explicit upper bounds for the absolute values of the solutions, for
Thue equations \cite{Baker67}
and hyper- and superelliptic equations
\cite{Baker69} over $\Zz$.
Schinzel and Tijdeman \cite{SchinzelTijdeman76} were the first to consider superelliptic equations $f(x)=\delta y^m$ over $\Zz$
where also the exponent $m$ was taken as an unknown
and gave an effective upper bound for $m$.
Their proof also depends on Baker's linear forms estimates.

The effective results of Baker and of Schinzel and Tijdeman were extended to
equations where
the solutions $x,y$ are taken from larger integral domains; we mention here
Coates \cite{Coates68}, Sprind\v{z}uk and Kotov \cite{SprinKotov73}
(Thue equations over
$\OO_S$, where $\OO_S$ is the ring of $S$-integers of an algebraic
number field), Trelina \cite{Trelina78}, Brindza \cite{B6} (hyper- and
superelliptic equations over $\OO_S$),
Gy\H{o}ry \cite{Gy3} (Thue equations over a restricted class of integral
domains finitely generated over $\Zz$ that contain transcendental
elements), Brindza \cite{B5} and V\'{e}gs\H{o} \cite{Vegso94}
(hyper- and superelliptic equations and the Schinzel-Tijdeman equation
over the class of domains considered by Gy\H{o}ry).
These last mentioned works of Gy\H{o}ry, Brindza and V\'{e}gs\H{o}
were based on an effective specialization method developed
by Gy\H{o}ry in the 1980's \cite{Gy3}, \cite{Gy20}.

Recently,
Evertse and Gy\H{o}ry \cite{EGy9}
extended Gy\H{o}ry's specialization method so that it can now be used
to prove effective results for Diophantine equations over arbitrary finitely generated domains $A$
over $\Zz$, without any further restriction on $A$ whatsoever.
They applied this to
unit equations $ax+by=c$ in units $x,y$ of $A$,
and gave an effective upper bound for the
sizes of the solutions $x,y$
in terms of appropriate representations for $A,a,b,c$.
In their method of proof, Evertse and Gy\H{o}ry used existing effective
results for $S$-unit equations over number fields and function fields,
and combined these with their general specialization method.

The approach of Evertse and Gy\H{o}ry can be applied to various other classes
of Diophantine equations.
In the present paper, we have worked out the consequences for Thue equations, hyper-and superelliptic equations, and Schinzel-Tijdeman equations.

\section{Results}\label{S_Results}

We first introduce the necessary notation and then state our results.

\subsection{Notation}\label{SS_notation}
Let $A=\Z[z_1, \dots , z_r]$ be a finitely generated integral domain
of characteristic $0$ which is finitely generated over $\Z$. We assume that $r>0$. We have
$$
A \cong \Z[X_1, \dots, X_r]/I
$$
where $I$ is the ideal of polynomials $f \in \Z[X_1, \dots, X_r]$ such that $f(z_1, \dots, z_r)=0$. The ideal
$I$ is finitely generated, say
$$
I=(f_1\kdots f_t).
$$
We may view $f_1\kdots f_t$ as a representation for $A$.
Recall that a necessary and sufficient condition for $A$ to be a domain
of characteristic zero is that $I$ be a prime ideal with $I\cap\Zz =(0)$.
Given a set of generators $\{f_1\kdots f_t\}$ for $I$ this can be checked
effectively (see for instance Aschenbrenner
\cite[Cor. 6.7, Lemma 6.1]{Aschenbrenner04} but this
follows already from work of Hermann \cite{Hermann26}).

Denote by $K$ the quotient field of $A$. For $\al \in A$, we call $f$  a {\it representative} for $\al$, or
we say that $f$ represents $\al$, if $f \in \Z[X_1, \dots, X_r]$ and
$\al=f(z_1, \dots , z_r)$. Further,
for $\al\in K$ we call $(f,g)$ a {\it pair of representatives} for $\al$, or say that $(f,g)$ represents $\al$ if
$f,g \in \Z[X_1, \dots, X_r]$, $g \not\in I$ and
$\al=f(z_1, \dots , z_r)/g(z_1, \dots , z_r)$.

Using an ideal membership algorithm
for $\Zz [X_1\kdots X_r]$ (see e.g., Aschenbrenner \cite[Theorem A]{Aschenbrenner04}
but such algorithms were probably known in the 1960's), one can decide effectively whether two
polynomials $f',f''\in\Zz [X_1\kdots X_r]$ represent the same element
of $A$, i.e., $f'-f''\in I$, or whether two pairs of polynomials
$(f',g'),(f'',g'')$ in $\Zz [X_1\kdots X_r]$
represent the same element of $K$, i.e., $g'\not\in I$, $g''\not\in I$
and $f'g''-f''g'\in I$.

Given a non-zero polynomial $f\in\Zz [X_1\kdots X_r]$,
we denote by $\deg f$ its total degree and by
$h(f)$ its logarithmic height, that is the logarithm of the maximum of the absolute values
of its coefficients. Then the {\it size} of $f$ is defined by
$$
s(f):=\max(1,\deg f, h(f)).
$$
Further, we define $s(0):=1$.
It is clear that there are only finitely many polynomials in $\Z[X_1, \dots, X_r]$ of size below a given bound, and
these can be determined effectively.

Throughout the paper we shall use the notation $O(\cdot)$ to denote a quantity which is $c$ times the expression between the parentheses,
where $c$ is an effectively computable positive absolute constant which may be different at each occurrence of the $O$-symbol. Further,
throughout the paper we write
$$
\log^* a :=\max(1, \log a) \quad \text{for $a>0$}, \quad\quad \log^* 0:=1.
$$

\subsection{Thue equations}\label{SS_results_Thue}
We consider the Thue equation over $A$,
\begin{equation}\label{Thue}
F(x,y)=\de \quad\quad \text{in} \quad x,y \in A,
\end{equation}
where
$$
F(X,Y)=a_0X^n+a_1X^{n-1}Y+\dots+a_nY^n \in A[X,Y]
$$
is a binary form of degree $n\geq 3$ with discriminant $D_F\not= 0$, and $\de \in A\setminus \{ 0 \}$. Choose
representatives
$$
\til{a_0}, \til{a_1}, \dots , \til{a_n}, \til{\de}\in \Z[X_1, \dots , X_r]
$$
of $a_0, a_1, \dots , a_n, \de$, respectively.
To ensure that $\de\not= 0$ and $D(F)\not= 0$, we have to choose the
representatives in such a way that $\til{\de}\not\in I$, $D_{\til{F}}\not\in I$
where $D_{\til{F}}$ is the discriminant of
$\til{F}:=\sum_{j=0}^n \til{a_j}X^{n-j}Y^j$. These last two conditions
can be checked by means of the ideal membership algorithm
mentioned above.
Let
\begin{equation}\label{Thue_h_deg_assumption}
\begin{cases}
&\max (\deg f_1, \dots , \deg f_t, \deg \til{a_0}, \deg \til{a_1}, \dots , \deg \til{a_n}, \deg \til{\de}) \leq d \\
&\max (h(f_1), \dots ,h(f_t), h(\til{a_0}), h(\til{a_1}), \dots , h(\til{a_n}), h(\til{\de})) \leq h,
\end{cases}
\end{equation}
where $d\geq 1$, $h\geq 1$.

\begin{theorem}\label{T_Thue}
Every solution $x,y$ of equation \eqref{Thue} has representatives $\til{x}, \til{y}$ such that
\begin{equation}\label{Thue_boundsintheorem}
s(\til{x}), s(\til{y}) \leq \exp \left(n!(nd)^{\exp O(r)}(h+1) \right).
\end{equation}
\end{theorem}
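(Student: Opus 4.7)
The plan is to adapt the Evertse--Gy\H{o}ry specialization method of \cite{EGy9}, developed there for unit equations, to Thue equations. That method takes as black-box input two classical effective results and produces effective bounds over an arbitrary finitely generated domain. In our case the required inputs are Baker's effective theorem for Thue equations over number fields, and an effective Mason-type theorem for Thue equations over function fields in one variable. The task is to plug these ingredients into the framework of \cite{EGy9} and track the dependence on $n,d,h,r$.

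First I would normalize the representation of $A$ as in \cite{EGy9}: reorder the $z_i$ so that $z_1,\ldots,z_q$ form a transcendence basis of $K=\mathrm{Frac}(A)$ over $\Q$ and $z_{q+1},\ldots,z_r$ are algebraic over $K_0=\Q(z_1,\ldots,z_q)$. Setting $A_0=\Z[z_1,\ldots,z_q]$ and letting $B$ be the integral closure of $A_0$ in $K$, there exists an explicit nonzero $s\in A_0$ with $A\subseteq s^{-1}B$. The paper \cite{EGy9} supplies effective bounds for the degrees and heights of representatives of generators of $B$ and of $s$ in terms of $d,h,r$. This reduction preserves the shape of the final bound, so it suffices to work with $x,y\in s^{-1}B$.

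Next, when $q\geq 1$ I would regard the equation $F(x,y)=\de$ over $K$ as a Thue equation over a function field and apply the effective Mason-type bound (Mason, Gy\H{o}ry, Brindza) to control the heights of $x,y$ at every discrete valuation of $K/K_0$. This yields an effective bound on the total degrees of polynomial representatives of $x,y$. For the heights, invoke the specialization machinery of \cite{EGy9}: for a Zariski-dense set of $\underline{\omega}\in\oQ^q$ avoiding the zero loci of $\til{\de}$, $D_{\til{F}}$, $s$ and the relevant field discriminants, the substitution $z_i\mapsto\omega_i$ extends to a homomorphism $\varphi_{\underline{\omega}}:B\to\oQ$ landing in a number field of effectively controlled degree and discriminant. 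The pair $(\varphi_{\underline{\omega}}(x),\varphi_{\underline{\omega}}(y))$ satisfies a specialized Thue equation whose coefficients have effectively bounded height, so Baker's theorem delivers an effective height bound on the specialization. Running over sufficiently many such $\underline{\omega}$ and applying the interpolation lemma of \cite{EGy9}, one combines the bounded specialized heights with the degree bound from the function-field step to reconstruct an effective bound on the logarithmic heights of representatives $\til{x},\til{y}$. The case $q=0$ requires only Baker's theorem directly.

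The main obstacle will be bookkeeping of the constants. The factor $n!$ in \eqref{Thue_boundsintheorem} traces back to the number of conjugates appearing in Baker's bound for Thue equations (via the standard reduction to unit equations in the splitting field of $F$), while the doubly exponential $(nd)^{\exp O(r)}$ arises from the iterated use of the specialization machinery: each pass through one of the $q$ transcendental generators composes the bound with an exponential, giving the nested shape $\exp O(r)$. Extracting an effective function-field Thue bound with the correct polynomial dependence on $n$ and on coefficient degrees, and combining it cleanly with this iterated specialization so that the resulting exponent has the stated shape without accumulating extra factorials or exponentials, is the technical heart of the argument.
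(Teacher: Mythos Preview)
Your overall plan matches the paper's approach closely: reduce to a domain $B=A_0[f^{-1},w]$ via the machinery of \cite{EGy9}, bound $\odeg x,\odeg y$ using a function-field Thue result (the paper uses Schmidt's theorem rather than Mason directly, but these are equivalent for the purpose), then bound $\oh(x),\oh(y)$ by specializing all of $z_1,\ldots,z_q$ simultaneously to integers $\uv\in\Z^q$ and applying a Baker-type number-field Thue result (Gy\H{o}ry--Yu), finally converting back to representatives via the Aschenbrenner-based lemma of \cite{EGy9}. Your identification of the $n!$ as coming from the splitting field of $F$ in the number-field step is correct.

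One point deserves correction, however. You attribute the doubly-exponential shape $(nd)^{\exp O(r)}$ to an \emph{iterated} specialization, ``each pass through one of the $q$ transcendental generators compos[ing] the bound with an exponential.'' This is not how the argument works, and an actual iteration of that kind would produce a tower of exponentials of height $q$, not the stated bound. In the paper (and in \cite{EGy9}) the specialization $z_i\mapsto u_i$ is performed \emph{once}, in a single shot for all $q$ variables, and the function-field bounds for the various $z_i$ are combined additively, not compositionally. The exponent $\exp O(r)$ comes instead from the effective commutative algebra over $\Z[X_1,\ldots,X_r]$: constructing the primitive element $w$ and the denominator $f$ with controlled data (Proposition~3.4 of \cite{EGy9}), and converting $(\odeg,\oh)$-bounds back to representatives (Lemma~3.7 of \cite{EGy9}, which rests on Aschenbrenner's ideal-membership bounds). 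These steps introduce factors of the form $(2d)^{\exp O(r)}$ and $(2\widehat d)^{\exp O(r\log^* r)}$, and that is the true source of the bad dependence on $r$. Keeping this straight matters, because if you try to iterate specializations you will not recover the stated shape.
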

The exponential dependence of the upper bound on $n!$, $d$ and $h+1$ is coming from
a Baker-type effective result for Thue equations over number fields
that is used in the proof.
The bad dependence on $r$ is coming from the
effective commutative
algebra for polynomial rings over fields and over $\Zz$,
that is used in the specialization method of Evertse and Gy\H{o}ry mentioned
above.

We immediately deduce that equation \eqref{Thue} is effectively solvable:

\begin{corollary}\label{C_Thue}
There exists an algorithm which,
for any given $f_1\kdots f_t$ such that $A$
is a domain, and any representatives $\til{a_0}\kdots\til{a_n}$, $\tilde{\de}$
such that $D_{\til{F}},\til{\de}\not\in I$, computes
a finite list, consisting of one pair of representatives for each solution $(x,y)$
of \eqref{Thue}.
\end{corollary}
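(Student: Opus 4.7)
The plan is to convert the algorithm into a bounded brute-force search, combining Theorem \ref{T_Thue}, which supplies an a priori size bound for a pair of representatives of any solution, with the ideal membership algorithm for $\Zz[X_1,\ldots,X_r]$ recalled in Section \ref{SS_notation}; the latter lets us decide both whether a polynomial lies in $I$ and whether two pairs of polynomials represent the same element of $K$. First I would read off from the input the parameters $n$ (the degree of $\tilde{F}$), $r$ (the number of variables in the presentation of $A$), together with integers $d,h \geq 1$ satisfying \eqref{Thue_h_deg_assumption}; all of these are computed directly from the degrees and logarithmic heights of the given polynomials. Substituting into \eqref{Thue_boundsintheorem} yields an explicit integer $B$ such that every solution $(x,y)$ of \eqref{Thue} admits a representative pair $(\tilde{x}, \tilde{y})$ with $s(\tilde{x}), s(\tilde{y}) \leq B$.

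Next I would enumerate the finite set $\mathcal{S}_B$ of polynomials in $\Zz[X_1,\ldots,X_r]$ of size at most $B$, which is effectively computable by the remark in Section \ref{SS_notation}, and for each pair $(\tilde{x}, \tilde{y}) \in \mathcal{S}_B \times \mathcal{S}_B$ test, via ideal membership, whether $\tilde{F}(\tilde{x}, \tilde{y}) - \tilde{\de} \in I$. The pairs surviving this test are exactly the representative pairs of solutions of \eqref{Thue}, and by Theorem \ref{T_Thue} they collectively cover every solution. A second pass through this list, using the equality-of-representatives test from Section \ref{SS_notation} to discard any $(\tilde{x}', \tilde{y}')$ for which $\tilde{x}' - \tilde{x} \in I$ and $\tilde{y}' - \tilde{y} \in I$ for some earlier retained $(\tilde{x}, \tilde{y})$, trims the output to exactly one pair per solution.

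The hard part has essentially been done upstream: all the real analytic work is absorbed into Theorem \ref{T_Thue}, and termination, correctness, and effectivity of the search reduce to the decidability of ideal membership in $\Zz[X_1,\ldots,X_r]$. The only caveat worth flagging is that the bound $B$ is astronomical in $r$, $d$, and $h$, so the procedure is effective only in principle, as already noted in the introduction.
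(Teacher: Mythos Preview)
Your proposal is correct and follows essentially the same approach as the paper's proof: compute the bound from Theorem~\ref{T_Thue}, enumerate all pairs $(\tilde x,\tilde y)$ of size at most that bound, test ideal membership of $\tilde F(\tilde x,\tilde y)-\tilde\de$ in $I$, and then prune duplicates modulo $I$. The only difference is that you spell out a few routine details (reading off $n,r,d,h$ and the effectiveness of the enumeration) that the paper leaves implicit.
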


\begin{proof}
Let $C$ be the upper bound from \eqref{Thue_boundsintheorem}.
Check for each pair of polynomials $\til{x},\til{y}\in\Zz [X_1\kdots X_r]$
of size at most $C$ whether $\til{F}(\til{x},\til{y})-\til{\de}\in I$.
Then for all pairs $\tilde{x},\tilde{y}$ passing this test,
check whether they are equal modulo $I$, and keep a maximal subset of pairs
that are different modulo $I$.
\end{proof}

\subsection{Hyper- and superelliptic equations}\label{SS_results_hypersuper}
We now consider the equation
\begin{equation}\label{hypersuper}
F(x)=\de y^m \quad\quad \text{in} \quad x,y \in A,
\end{equation}
where
$$
F(X)=a_0X^n+a_1X^{n-1}+\dots+a_n \in A[X]
$$
is a polynomial of degree $n$ with discriminant $D_F \ne 0$, and where $\de \in A\setminus \{ 0 \}$. We assume that either $m=2$ and $n\geq 3$, or $m\geq 3$
and $n\geq 2$. For $m=2$, equation \eqref{hypersuper} is called
a \emph{hyperelliptic equation}, while for $m\geq 3$ it is called
a \emph{superelliptic equation}. Choose again
representatives
$$
\til{a_0}, \til{a_1}, \dots , \til{a_n}, \til{\de}\in \Z[X_1, \dots , X_r]
$$
for $a_0, a_1, \dots , a_n, \de$, respectively.
To guarantee that $\delta\not= 0$ and $D_F\not= 0$, we have to choose the
representatives in such a way that
$\til{\de}$ and the discriminant of $\til{F}:=\sum_{j=0}^n \til{a_j}X^{n-j}$ do not belong to $I$.
Let
\begin{equation}\label{hypersuper_h_deg_assumption}
\begin{cases}
&\max (\deg f_1, \dots , \deg f_t, \deg \til{a_0}, \deg \til{a_1}, \dots , \deg \til{a_n}, \deg \til{\de}) \leq d \\
&\max (h(f_1), \dots ,h(f_t), h(\til{a_0}), h(\til{a_1}), \dots , h(\til{a_n}), h(\til{\de})) \leq h,
\end{cases}
\end{equation}
where $d\geq 1$, $h\geq 1$.

\begin{theorem}\label{T_hypersuper}
Every solution $x,y$ of equation \eqref{hypersuper} has representatives $\til{x}, \til{y}$ such that
\begin{equation}\label{hypersuper_boundsintheorem}
s(\til{x}), s(\til{y}) \leq  \exp \left( m^3(nd)^{\exp{O(r)}}(h+1) \right).
\end{equation}
\end{theorem}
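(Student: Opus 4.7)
The plan is to follow the same blueprint as the proof of Theorem \ref{T_Thue}, with the Baker-type ingredient for Thue equations replaced by its hyper-/superelliptic analogue. The engine is the effective specialization method of Evertse and Győry \cite{EGy9}, which reduces a Diophantine problem over the finitely generated domain $A$ to the corresponding problem over (i) a number field and (ii) a function field of one variable over a number field. Effective bounds for hyper- and superelliptic equations are available in both settings (Baker, Brindza, Sprindžuk--Kotov, Trelina, and function-field analogues), so the task is essentially to marry these with the specialization machinery.

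First I would set up the algebraic framework exactly as in \cite{EGy9}: using the representation $A=\Zz[X_1,\dots,X_r]/I$, pick a transcendence basis $z_1,\dots,z_q$ of $K=\mathrm{Frac}(A)$ over $\Qq$, put $B:=\Zz[z_1,\dots,z_q]$, and realize $A$ inside $A_0:=B[w]$ with $w$ integral over $B$ and minimal polynomial of controlled size. The goal is then to bound, for each solution $(x,y)$ of \eqref{hypersuper}, both the $B$-heights and the $w$-degrees of $x$ and $y$, since this translates, via the effective commutative algebra developed in \cite{EGy9}, into a bound on the size of polynomial representatives $\tilde x,\tilde y$.

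Next, for each good specialization $\varphi\colon z_i\mapsto u_i\in\oQ$ ($i=1,\dots,q$) that extends to a homomorphism $\Phi\colon A_0\to\oQ$, equation \eqref{hypersuper} descends to $\Phi(F)(\Phi(x))=\Phi(\de)\Phi(y)^m$, a hyper-/superelliptic equation over the number field $\Qq(\Phi(w))$. Invoking the effective Baker-type bounds (Brindza \cite{B5}, \cite{B6}, following Sprindžuk--Kotov and Trelina), whose dependence on $m$ is of order $m^3$, gives effective bounds on the absolute logarithmic heights of $\Phi(x),\Phi(y)$. Collecting these bounds over a sufficiently rich family of specializations, avoiding a thin exceptional set cut out by the discriminants $D_{\til F}$, the numerators and denominators of the coefficient representations, and $\til\de$, and then running the reconstruction lemma of \cite{EGy9}, yields bounds on the Weil heights of $x,y$ viewed as elements of $K$. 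In parallel, specializing to function subfields (or proceeding by induction on the transcendence degree $q$, as in \cite{EGy9}) gives effective bounds on the degrees of polynomial representatives of $x,y$ in the variables $z_1,\dots,z_q$, via effective results for hyper-/superelliptic equations over a one-variable function field in the spirit of Mason. Putting the number-field and function-field bounds together and clearing denominators yields representatives $\tilde x,\tilde y$ of the shape asserted in \eqref{hypersuper_boundsintheorem}.

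The main obstacle I expect is the function-field side: one must carry the explicit $m^3$-dependence through a Mason-type argument for $f(x)=\de y^m$ in one function variable, uniformly in the family of specializations, while handling both cases $m=2$, $n\ge 3$ and $m\ge 3$, $n\ge 2$. A secondary, more bookkeeping, difficulty is to arrange that enough specializations simultaneously preserve nonvanishing of $\de$ and of $D_F$, so that the specialized equations remain genuinely hyper-/superelliptic and the discriminant-based estimates in \cite{EGy9} go through with the quantitative shape $(nd)^{\exp O(r)}(h+1)$ needed to match \eqref{hypersuper_boundsintheorem}.
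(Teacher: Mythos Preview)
Your proposal is correct and follows essentially the same route as the paper: pass to the overring $B=A_0[w,f^{-1}]$ (your $A_0$ and $B$ are swapped relative to the paper's notation), bound $\odeg x,\odeg y$ via Mason-type function-field results for hyper-/superelliptic equations, bound $\oh(x),\oh(y)$ via the specializations of \cite{EGy9} combined with effective number-field bounds (the paper uses \cite{BA18} rather than Brindza, but the structure is identical), and then invoke the reconstruction lemma of \cite{EGy9} to produce representatives of the asserted size. One small correction to your expectations: the $m^3$-dependence enters only through the number-field step; the Mason-type function-field bounds actually give $\odeg x$ and $m\,\odeg y$ bounded by $(nd)^{\exp O(r)}$ independently of $m$, so the function-field side is easier than you anticipate (and as a bonus yields the bound on $m$ in Theorem~\ref{T_ST_over_A} when $y\notin\OQq$).
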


Completely similarly as for Thue equations, one can determine effectively
a finite list, consisting of one pair of representatives
for each solution $(x,y)$ of \eqref{hypersuper}.

Our next result deals with the Schinzel-Tijdeman equation,
which is \eqref{hypersuper} but with three unknowns $x,y\in A$
and $m\in\Zz_{\geq 2}$.

\begin{theorem}\label{T_ST_over_A}
Assume that in \eqref{hypersuper}, $F$ has non-zero discriminant and $n\geq 2$.
Let $x,y\in A$, $m\in\Zz_{\geq 2}$ be a solution of \eqref{hypersuper}. Then
\begin{eqnarray}
\label{bound_ST_over_A}
&&m\leq \exp \left( (nd)^{\exp O(r)} (h+1) \right)
\\
\nonumber
&&\qquad\quad
\mbox{if }
y\in\OQq,\ y\not=0,\ y\ \mbox{is not a root of unity,}
\\[0.2cm]
\label{bound_ST_over_A-alg}
&&m\leq (nd)^{\exp O(r)}\ \ \mbox{if } y\not\in\OQq .
\end{eqnarray}
\end{theorem}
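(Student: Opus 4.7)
The plan is to apply the specialization method of Evertse and Gy\H{o}ry \cite{EGy9} and reduce to classical effective Schinzel--Tijdeman bounds over number fields and over function fields. The dichotomy in the conclusion mirrors the two sources: if $y$ is transcendental, a Mason-type ABC-inequality over function fields suffices and no arithmetic height enters; if $y$ is algebraic, one is forced into a number field, where Baker's estimates on linear forms in logarithms produce the bound of the shape $\exp\bigl((nd)^{\exp O(r)}(h+1)\bigr)$.

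Setup: pick from $z_1,\dots,z_r$ a transcendence basis of $K=\mathrm{Frac}(A)$ over $\Qq$, write $K=K_0(w)$ with $K_0=\Qq(z_{i_1},\dots,z_{i_q})$ and $w$ algebraic over $K_0$, and let $k$ be the algebraic closure of $\Qq$ in $K$, so that $K/k$ is a function field. The effective commutative-algebra lemmas of \cite{EGy9} control all arising degrees and heights by $(nd)^{\exp O(r)}$ and $h$. In the case $y\not\in\OQq$, the element $y$ is non-constant in $K/k$. I would then specialize $q-1$ of the $z_{i_j}$ to algebraic values, preserving the non-constancy of $y$ and the nondegeneracy of $F$ and $\delta$, so as to reduce to a one-variable function field over a number field. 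An effective Schinzel--Tijdeman theorem for such function fields (cf.\ Brindza \cite{B5}, or a direct consequence of Mason's inequality) then bounds $m$ in terms of the function-field degrees of $F,\delta,y$ alone. Inserting the estimates from \cite{EGy9} yields \eqref{bound_ST_over_A-alg}.

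In the case $y\in\OQq\setminus\{0\}$ not a root of unity, $y$ lies in $k$. I would construct a specialization $\varphi:A\to\OQq$ via \cite{EGy9} that is (i) injective on $k\cap A$ (which is automatic for any ring homomorphism from an integral domain of characteristic zero into a field), so $\varphi(y)$ is again a nonzero non-root-of-unity with the same Weil height as $y$, (ii) nondegenerate in the sense that $\varphi(a_0)\neq 0$, $\mathrm{disc}(\varphi(F))\neq 0$ and $\varphi(\delta)\neq 0$, and (iii) takes $A$ into a number field $L$ whose degree and defining-polynomial height are controlled by $(nd)^{\exp O(r)}(h+1)$. Applying the classical effective Schinzel--Tijdeman theorem over $\OO_S$ (Schinzel and Tijdeman \cite{SchinzelTijdeman76}, Brindza \cite{B5}, V\'egs\H{o} \cite{Vegso94}) to the specialized equation then yields \eqref{bound_ST_over_A}.

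The main obstacle will be the simultaneous construction of $\varphi$ satisfying (ii) and (iii) with explicit effective control, while also respecting the constraints imposed by the already-present element $y\in k$: one must avoid the bad loci in the specialization parameter space where $\varphi$ would collapse $F$ or $\delta$, all with fully explicit constants. The specialization lemmas of \cite{EGy9} are designed for exactly this task, but translating their output into the precise shape of input demanded by the number-field Schinzel--Tijdeman theorem will require careful bookkeeping of representatives and of the dependence of the resulting bounds on $n$, $d$, $h$, $r$.
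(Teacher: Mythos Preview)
Your proposal is correct and follows essentially the same route as the paper: pass to the overring $B=A_0[w,f^{-1}]$ in which $\delta$ and $D_F$ are units, handle the case $y\notin\OQq$ by Mason's function-field bounds on hyper/superelliptic equations (which give $m\,\odeg y\leq (nd)^{\exp O(r)}$, so $\odeg y\geq 1$ forces \eqref{bound_ST_over_A-alg}), and handle $y\in\OQq$ by applying a specialization $\vp_{\uv,j}$ from \cite{EGy9}---which sends $y$ to a conjugate of itself, hence still nonzero and not a root of unity---followed by the number-field Schinzel--Tijdeman bound of \cite{BA18}. The only cosmetic difference is that in the transcendental case the paper does not specialize $q-1$ variables to specific algebraic values but instead works over all the fields $M_i=\obK_i(z_i,w^{(1)},\dots,w^{(D)})$ for $i=1,\dots,q$, which avoids having to verify that a particular specialization preserves the non-constancy of $y$.
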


\section{A reduction}\label{S_reduction}

We shall reduce our equations to equations of the same type over an integral domain $B\supseteq A$ of a special type which is more convenient
to deal with.

As before, let $A=\Zz [z_1\kdots z_r]$ be an integral domain
which is finitely generated over $\Zz$ and let $K$ be the quotient field of $A$.
Suppose that $K$ has transcendence degree $q\geq 0$. If $q>0$, we assume without loss of generality that
$\{z_1, \dots, z_q \}$ forms a transcendence basis of $K/\Q$. Write $\rho:=r-q$. We define
$$
\begin{aligned}
&A_0:=\Z[z_1, \dots, z_q], \quad &&K_0:=\Q(z_1, \dots , z_q) \quad && \text{if $q>0$} \\
&A_0:=\Z, \quad &&K_0 :=\Q && \text{if $q=0$}.
\end{aligned}
$$
The field $K$ is a finite extension of $K_0$. Further, if $q=0$, it is an algebraic number field.
In case that $q>0$, for $f\in A_0\setminus\{ 0\}$ we define $\deg f$
and $h(f)$ to be the total degree and logarithmic height of $f$, viewed
as a polynomial in the variables $z_1\kdots z_q$.
In case that $q=0$, for $f\in A_0\setminus\{ 0\}=\Z\setminus\{ 0\}$,
we put $\deg f := 0$ and $h(f):=\log |f|$. 

We shall construct an integral extension $B$ of $A$ in $K$ such that
\begin{equation}\label{extended_domain}
B:=A_0[w,f^{-1}],
\end{equation}
where $f\in A_0$ and $w$ is a primitive element of $K$ over $K_0$ which is integral over $A_0$. Then we give a bound for the sizes of the
solutions of our equations in $x,y\in B$.

We recall that $A\cong\Zz [X_1\kdots X_r]/I$ where
$I\subset \Zz [X_1\kdots X_r]$
is the ideal of polynomials $f$ with $f(z_1\kdots z_r)=0$
and $z_i$ corresponds
to the residue class of $X_i$ modulo $I$.
The ideal $I$ is finitely generated.  Assume that
$$
I=(f_1, \dots , f_t),
$$
and put
\begin{equation}
d_0:=\max (1,\deg f_1, \dots , \deg f_t),\quad\quad h_0:=\max (1,h(f_1), \dots ,h(f_t)).
\end{equation}

\begin{proposition}\label{P_newdomaingenerators}
(i) There is a $w\in A$ such that $K=K_0(w)$, $w$ is integral over $A_0$ and $w$ has minimal polynomial
$$
\F(X)=X^D+\F_1 X^{D-1}+\dots+\F_D\in A_0[X]
$$
over $K_0$ such that $D\leq d_0^{\rho}$ and
\begin{equation}\label{bound_minpol_fieldgen}
\deg \F_k \leq (2d_0)^{\exp O(r)}, \quad\quad h(\F_k) \leq (2d_0)^{\exp O(r)}(h_0+1)
\end{equation}
for $k=1, \dots, D$.

(ii) Let $\al_1, \dots, \al_k\in K^*$ and suppose that the pairs $u_i, v_i\in \Z[X_1, \dots, X_r]$, $v_i \not\in I$ represent $\al_i$ for $i=1,\dots, k$, respectively. Put
$$
\begin{aligned}
&d^{**}:=\max(d_0, \deg u_1, \deg v_1, \dots, \deg u_k, \deg v_k), \\
&h^{**}:=\max(h_0, h(u_1), h(v_1), \dots, h(u_k), h(v_k)).
\end{aligned}
$$
Then there is a non-zero $f\in A_0$ such that
\begin{equation}\label{extended_domain_III}
\begin{aligned}
&A \subseteq A_0[w,f^{-1}], \\
&\al_1, \dots, \al_k\in A_0[w,f^{-1}]^* \\
\end{aligned}
\end{equation}
and
\begin{equation}\label{bound_minpol_fieldgen_II}
\deg f \leq (k+1)(2d^{**})^{\exp O(r)}, \quad\quad h(f) \leq (k+1) (2d^{**})^{\exp O(r)}(h^{**}+1).
\end{equation}
\end{proposition}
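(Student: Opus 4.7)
My plan is to split the statement into its two parts and handle each by a classical commutative-algebra construction made effective via resultant and Mahler-type height bounds.

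For part (i), I would first use the transcendence basis assumption: since $z_1,\ldots,z_q$ are algebraically independent, each $z_{q+j}$ for $1\leq j\leq\rho$ is algebraic over $K_0$. Eliminating the remaining variables from the generators $f_1,\ldots,f_t$ of $I$ via iterated univariate resultants in $\Z[X_1,\ldots,X_r]$ produces, for each $j$, a non-zero $P_j(X)\in A_0[X]$ with $P_j(z_{q+j})=0$ of degree at most $d_0^\rho$ and height at most $(2d_0)^{\exp O(r)}(h_0+1)$; the degree and height bounds compound geometrically in $\rho\leq r$, giving the $\exp O(r)$. Scaling $z_{q+j}\mapsto\ell_j z_{q+j}$ by the leading coefficient $\ell_j\in A_0$ of $P_j$ makes it integral over $A_0$ (invertibility of $\ell_j$ will be restored in (ii) by absorbing $\ell_j$ into $f$). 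An effective primitive element argument --- choosing integer coefficients $\lambda_j$ bounded polynomially in $D\leq d_0^\rho$ so that the $D$ Galois conjugates of $w:=\sum_j\lambda_j\ell_j z_{q+j}$ remain pairwise distinct --- yields a $w\in A$ generating $K$ over $K_0$ and integral over $A_0$. Its minimal polynomial $\F(X)$ is then an irreducible factor, over $K_0$, of the iterated resultant of $X-\sum_j\lambda_j Y_j$ against monic rescalings of $P_1(Y_1),\ldots,P_\rho(Y_\rho)$. The bounds \eqref{bound_minpol_fieldgen} follow from standard inequalities for degrees and heights of resultants and of factors of polynomials, together with the fact that $A_0=\Z[z_1,\ldots,z_q]$ is integrally closed (so that $\F\in A_0[X]$).

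For part (ii), I would first express each generator $z_i\in A$ in the $K_0$-basis $\{1,w,\ldots,w^{D-1}\}$ as $z_i=g_i(w)/h_i$ with $g_i\in A_0[X]$ of degree less than $D$ and $h_i\in A_0\setminus\{0\}$: the $g_i,h_i$ come by Cramer's rule from the linear system that records how the powers $w^0,\ldots,w^{D-1}$ generate $\ell_j z_{q+j}$ over $K_0$, and are controlled by the height of $\F$ obtained in (i). This already places $A\subseteq A_0[w,f_0^{-1}]$ for $f_0:=\prod_j\ell_j\cdot\prod_i h_i$, with $\deg f_0$ and $h(f_0)$ satisfying \eqref{bound_minpol_fieldgen_II} in the case $k=0$. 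Combining these formulas with the representatives $(u_i,v_i)$ for each $\alpha_i$, I can write $\alpha_i=p_i(w)/q_i$ and $\alpha_i^{-1}=p_i'(w)/q_i'$ with $p_i,p_i'\in A_0[X]$ of degree $<D$ and $q_i,q_i'\in A_0\setminus\{0\}$, all of bounded degree and height in terms of $d^{**}$ and $h^{**}$. To force $\alpha_i\in A_0[w,f^{-1}]^*$ it suffices that both $p_i(w)$ and $p_i'(w)$ be units in $A_0[w,f^{-1}]$; for this, multiplying $f_0$ by $\prod_i q_iq_i'\cdot\prod_i N_{K/K_0}(p_i(w))\,N_{K/K_0}(p_i'(w))$ is enough. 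Each norm equals $\pm\Res_X(p_i(X),\F(X))\in A_0\setminus\{0\}$, so one more application of the resultant bounds yields the linear-in-$(k+1)$ estimate \eqref{bound_minpol_fieldgen_II}.

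The main obstacle is not conceptual but the careful propagation of degrees and heights through the iterated resultants and norm computations: each elimination step can at worst square degrees and multiply heights by factors polynomial in the previous degree, so after $\rho\leq r$ steps one picks up the compound factor $(2d_0)^{\exp O(r)}$. I would not re-derive the necessary inequalities from scratch; the required effective elimination-theoretic estimates --- degree bounds for resultants of multivariate polynomials over $\Z[z_1,\ldots,z_q]$, Gelfond--Mahler bounds for heights of factors, and the analogous statements controlling both the $z_i$-degree and the logarithmic height --- are available either from Aschenbrenner \cite{Aschenbrenner04} or, in exactly the form needed here, from the Evertse--Gy\H{o}ry paper \cite{EGy9} on which the present work is built. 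I would invoke those results as black boxes and chain them to produce the bounds \eqref{bound_minpol_fieldgen} and \eqref{bound_minpol_fieldgen_II}.
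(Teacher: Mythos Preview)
Your sketch is essentially sound and follows the natural effective-commutative-algebra route (elimination to get monic relations for the $z_{q+j}$, an effective primitive element, then Cramer/resultant bookkeeping to place $A$ and the $\al_i^{\pm 1}$ inside $A_0[w,f^{-1}]$). One small redundancy: once you have $\al_i=p_i(w)/q_i$ and $\al_i^{-1}=p_i'(w)/q_i'$ with $q_i,q_i'\in A_0\setminus\{0\}$, it is enough to absorb $\prod_i q_iq_i'$ into $f$; the extra norm factors $N_{K/K_0}(p_i(w))$ are unnecessary (though harmless for the bounds).

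The comparison with the paper is short: the paper does not prove this proposition at all. Its entire argument is the citation ``For (i) see \cite{EGy9}, Proposition~3.4 and Lemma~3.2(i), and for (ii) see \cite{EGy9}, Lemma~3.6.'' So you have reconstructed, in outline, what is presumably carried out in \cite{EGy9}; for the purposes of the present paper you could simply cite those results directly, exactly as the authors do, and skip the resultant bookkeeping you describe. If you do want to write it out, be aware that the delicate step is not the height propagation but guaranteeing that the iterated resultants of the $f_i$ produce a \emph{non-zero} eliminant in $A_0[X]$ for each $z_{q+j}$; naive pairwise resultants can vanish identically, and one typically needs a more careful elimination (e.g.\ via a suitable choice of which $f_i$'s and which variable order to use, or via the effective Nullstellensatz-type arguments in \cite{Aschenbrenner04}) to secure this.
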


\begin{proof}
For (i) see Evertse and Gy\H ory \cite{EGy9}, Proposition 3.4 and Lemma 3.2, (i), and for (ii) see \cite{EGy9}, Lemma 3.6.
\end{proof}

We shall use Proposition \ref{P_newdomaingenerators}, (ii) in a special case. To state it, we introduce some
further notation and prove a lemma.

We recall that $a_0, a_1, \dots, a_n \in A$ are the coefficients of the binary form $F(X,Y)$, resp. of the polynomial $F(X)$
in Sections \ref{SS_results_Thue} resp. \ref{SS_results_hypersuper}, and $\til{a}_0, \til{a}_1, \dots, \til{a}_n$ denote their
representatives satisfying \eqref{Thue_h_deg_assumption} resp. \eqref{hypersuper_h_deg_assumption}.
This implies that $d_0\leq d$, $h_0\leq h$, and that $\til{a}_i$
has total degree $\leq d$ and logarithmic height $\leq h$ for $i=0\kdots n$.
Denote by $\til{F}$ the binary form $F(X,Y)$ resp. the polynomial $F(X)$ with coefficients $a_0, a_1, \dots, a_n$ replaced by $\til{a}_0, \til{a}_1, \dots, \til{a}_n$, and by $D_{\til{F}}$ the discriminant of $\til{F}$. In view of the assumption $D_F\ne 0$ we have $D_{\til{F}}\not\in I$.

Keeping the notation and assumptions of Sections \ref{SS_results_Thue} resp. \ref{SS_results_hypersuper}, we have the following lemma.

\begin{lemma}\label{L_bound_h_deg_DFtil}
For the discriminant $D_{\til{F}}$ the following statements are true:
\begin{align}
\label{bound_deg_DFtil} &\deg D_{\til{F}} \leq (2n-2)d, \\
\label{bound_h_DFtil} &h(D_{\til{F}}) \leq (2n-2)\left( \log \left( 2n^2 \binom{d+r}{r} \right) +h \right).
\end{align}
\end{lemma}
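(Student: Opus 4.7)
The plan is to express $D_{\tilde F}$ explicitly as a polynomial combination of the $\tilde a_i$, and then to estimate its degree and height via the standard multiplicative rules for polynomials in $\Z[X_1,\dots,X_r]$. The starting point is the Sylvester identity $a_0 D_F=\pm\det M$, where $M$ is the $(2n-1)\times(2n-1)$ Sylvester matrix of $F$ and $F'$, whose entries lie in $\{0,\,a_i,\,(n-i)a_i\}$. Since $\det M$ is divisible by $a_0$ as an algebraic identity in $\Z[a_0,\dots,a_n]$, expanding the determinant writes $D_F$ as a sum of at most $(2n-1)!$ signed monomials $c_\sigma\,a_{j_1}\cdots a_{j_{2n-2}}$, each of total multi-degree $2n-2$ in $a_0,\dots,a_n$ with integer coefficient $|c_\sigma|\le n^n$.

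For \eqref{bound_deg_DFtil} I would simply note that, after substituting the $\tilde a_i$ of total degree at most $d$, every product $\tilde a_{j_1}\cdots \tilde a_{j_{2n-2}}$ has total degree at most $(2n-2)d$ in $X_1,\dots,X_r$, and so does the whole sum.

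For \eqref{bound_h_DFtil} I would iterate the elementary inequality
\[
h(pq)\le h(p)+h(q)+\log\min\bigl(\#\mathrm{mon}(p),\#\mathrm{mon}(q)\bigr)
\]
for polynomials in $\Z[X_1,\dots,X_r]$, using $\#\mathrm{mon}(\tilde a_i)\le\binom{d+r}{r}$, to obtain the bound $h(\tilde a_{j_1}\cdots\tilde a_{j_{2n-2}})\le (2n-2)h+(2n-3)\log\binom{d+r}{r}$. Each term in the expansion of $D_{\tilde F}$ is this product multiplied by an integer $c_\sigma$ with $\log|c_\sigma|\le n\log n$, and there are at most $(2n-1)!$ such terms; summing and simplifying via $(2n-1)!\le (2n)^{2n}$ and $\binom{d+r}{r}\ge 2$ yields the desired majorant $(2n-2)\bigl(\log(2n^2\binom{d+r}{r})+h\bigr)$.

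The argument is essentially combinatorial bookkeeping, so there is no real obstacle. The one structural point that needs verification rather than computation is the polynomial divisibility of $\det M$ by $a_0$, which follows from the standard identity $\mathrm{Res}(F,F')=\pm a_0 D_F$; thus no actual division in $\Z[X_1,\dots,X_r]$ is ever performed, and the coefficient-counting above really controls $D_{\tilde F}$.
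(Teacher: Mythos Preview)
Your approach is essentially the same as the paper's: write $D_F$ as an explicit integer polynomial of degree $2n-2$ in $a_0,\dots,a_n$, substitute the $\tilde a_i$, and bound degree and height multiplicatively. The paper works with a direct $(2n-2)\times(2n-2)$ determinantal expression for $D_{\tilde F}$ itself (so no division by $a_0$ is needed) and uses the length $L(P)=\sum |\text{coeffs}|$ together with $L(PQ)\le L(P)L(Q)$; this gives $L(D_{\tilde F})\le (2n-2)!\bigl(n\binom{d+r}{r}e^h\bigr)^{2n-2}$ in one line. Your route through the $(2n-1)\times(2n-1)$ Sylvester matrix and the height inequality $h(pq)\le h(p)+h(q)+\log\min(\#\mathrm{mon}(p),\#\mathrm{mon}(q))$ is a legitimate alternative.

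Two points need repair. First, the sentence ``since $\det M$ is divisible by $a_0$ \dots\ expanding the determinant writes $D_F$ as a sum of at most $(2n-1)!$ signed monomials of degree $2n-2$'' is not yet an argument: the Leibniz expansion of $\det M$ produces monomials of degree $2n-1$, and many of them contain no factor $a_0$. You need the extra observation that the sub-sum of such terms lies in $\Z[a_1,\dots,a_n]$ and hence, by $a_0\mid\det M$, must vanish identically; only then can you strip one $a_0$ from each surviving term. (Alternatively, expand along the first column, whose only nonzero entries are $a_0$ and $na_0$.)

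Second, your final simplification does not deliver the stated constant. With $(2n-1)!\le(2n)^{2n}$ one obtains $2n\log(2n)+n\log n+(2n-3)\log\binom{d+r}{r}+(2n-2)h$, and for $n\in\{2,3,4\}$ this exceeds $(2n-2)\bigl(\log(2n^2\binom{d+r}{r})+h\bigr)$ even after invoking $\binom{d+r}{r}\ge 2$: the required inequality reduces to $(n-4)\log n\ge\log 2$, which fails. Using the sharper (and still elementary) bound $(2n-1)!\le(2n)^{2n-2}$, valid for all $n\ge 2$, fixes this immediately.
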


\begin{proof}
Recall that the discriminant $D_{\til{F}}$ can be expressed as
\begin{equation}\label{determinant-expression}
D(\til{F})=\pm
\left|
\begin{array}{ccccccc}
\til{a}_0 &\til{a}_1      &\cdots&\cdots &\til{a}_n     &      &       \\
    &\ \ddots &      &       &        &\ddots&       \\
    &         &\til{a}_0   &\til{a}_1    &\cdots  &\cdots&\til{a}_m    \\
\til{a}_1 &2\til{a}_2     &\cdots&n\til{a}_n   &        &      &       \\
n\til{a}_0&(n-1)\til{a}_1 &\cdots&\til{a}_{n-1}&        &      &       \\
    &\ \ddots &      &       &\ddots  &      &       \\
    &         &\ddots&       &        &\ddots&       \\
    &         &      &n\til{a}_0   &(n-1)\til{a}_1&\cdots&\til{a}_{n-1}\\
\end{array}
\right|\, ,
\end{equation}
with on the first $n-2$ rows of the determinant $\til{a_0}\kdots \til{a_n}$,
on the $(n-1)$-st row $\til{a}_1,2\til{a}_2\kdots n\til{a}_n$, and on the last $n-1$ rows
$n\til{a}_0\kdots \til{a}_{n-1}$.
This implies at once \eqref{bound_deg_DFtil}.

To prove \eqref{bound_h_DFtil}, we use the length $L(P)$ of a polynomial $P\in \Z[X_1, \dots ,X_r]$, that is the sum of the
absolute values of the coefficients of $P$. It is known and easily seen that if $P,Q\in \Z[X_1, \dots , X_r]$ then $L(P+Q)$ and
$L(PQ)$ do not exceed $L(P)+L(Q)$ and $L(P)L(Q)$, respectively (see e.g. Waldschmidt \cite{Waldschmidt2000}, p.76).

We have
$$
L(\til{a}_i)\leq \binom{d+r}{r}H \quad\quad \text{with} \quad H=\exp h \quad \quad \text{for $i=0,\dots , n$}.
$$
By applying these facts to \eqref{determinant-expression},
we obtain
$$
L(D_{\til{F}}) \leq (2n-2)! \left(n  \binom{d+r}{r}H \right)^{2n-2}.
$$
Together with $h(D_{\til{F}}) \leq \log L(D_{\til{F}})$ this implies \eqref{bound_h_DFtil}.
\end{proof}

We now apply Proposition \ref{P_newdomaingenerators}, (ii) to the numbers $\al_1=\de$, $\al_2=\de^{-1}$, $\al_3=D_F$ and
$\al_4=D_F^{-1}$.
Then the pairs $(\til{\de},1)$, $(1,\til{\de})$, $(D_{\til{F}},1)$, $(1,D_{\til{F}})$ represent the numbers
$\al_i$, $i=1, \dots ,4$. Using the upper bounds for
$\deg D_{\til{F}}$,  $h(D_{\til{F}})$ implied by
Lemma \ref{L_bound_h_deg_DFtil}
as well as the upper bounds $\deg\til{\de}\leq d$, $h(\til{\de})\leq h$
implied by \eqref{Thue_h_deg_assumption},  \eqref{hypersuper_h_deg_assumption},
we get immediately from Proposition \ref{P_newdomaingenerators}, (ii) the following.

\begin{proposition}\label{P_newdomaingenerators_spec_case}
There is a non-zero $f\in A_0$ such that
\begin{equation}\label{extended_domain_IV}
A \subseteq A_0[w,f^{-1}],\ \ \
\de, D_F \in A_0[w,f^{-1}]^*
\end{equation}
and
\begin{equation}\label{bound_minpol_fieldgen_III}
\deg f \leq (nd)^{\exp O(r)}, \quad\quad h(f) \leq (nd)^{\exp O(r)}(h+1).
\end{equation}
\end{proposition}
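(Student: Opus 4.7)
The plan is to deduce this as a direct specialization of Proposition \ref{P_newdomaingenerators}(ii), applied to the four elements
\[
\alpha_1=\de,\ \alpha_2=\de^{-1},\ \alpha_3=D_F,\ \alpha_4=D_F^{-1}\in K^*,
\]
which is exactly the list of elements that must become units in the extension $A_0[w,f^{-1}]$. For these four elements one has the obvious pairs of representatives $(\til{\de},1)$, $(1,\til{\de})$, $(D_{\til{F}},1)$, $(1,D_{\til{F}})$, so the whole task reduces to controlling $d^{**}$ and $h^{**}$ in terms of $n,d,h,r$.

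First I would read off the degree/height of $\til{\de}$ directly from \eqref{Thue_h_deg_assumption} (or \eqref{hypersuper_h_deg_assumption}), giving $\deg\til{\de}\leq d$ and $h(\til{\de})\leq h$, and also note that $d_0\leq d$, $h_0\leq h$. Then I would plug in the bounds for $\deg D_{\til{F}}$ and $h(D_{\til{F}})$ supplied by Lemma \ref{L_bound_h_deg_DFtil}. Combining,
\[
d^{**}\leq (2n-2)d,\qquad h^{**}\leq (2n-2)\Bigl(\log\bigl(2n^{2}\tbinom{d+r}{r}\bigr)+h\Bigr),
\]
so in particular $d^{**}\leq 2nd$ and, using $\log\binom{d+r}{r}\leq r\log(d+r)$,
\[
h^{**}\leq (nd)^{\exp O(r)}(h+1).
\]

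With $k=4$, Proposition \ref{P_newdomaingenerators}(ii) then produces an $f\in A_0\setminus\{0\}$ satisfying $A\subseteq A_0[w,f^{-1}]$ and $\al_1,\ldots,\al_4\in A_0[w,f^{-1}]^*$; since both $\de,\de^{-1}$ and both $D_F,D_F^{-1}$ lie in $A_0[w,f^{-1}]$, we get $\de,D_F\in A_0[w,f^{-1}]^*$ as required. The promised bounds
\[
\deg f\leq 5\,(2d^{**})^{\exp O(r)},\qquad h(f)\leq 5\,(2d^{**})^{\exp O(r)}(h^{**}+1)
\]
are then collapsed to $\deg f\leq (nd)^{\exp O(r)}$ and $h(f)\leq (nd)^{\exp O(r)}(h+1)$ by substituting the estimates above and absorbing the constant $5$ and the factor $(2n)^{\exp O(r)}$ into $(nd)^{\exp O(r)}$.

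The only non-bookkeeping step is verifying that the logarithmic binomial term in $h^{**}$ really does get swallowed by the final bound; this is where I would be most careful, but it poses no real obstacle since $\log\binom{d+r}{r}=O(r\log(nd))\leq (nd)^{\exp O(r)}$, so multiplying through by $(2d^{**})^{\exp O(r)}=(nd)^{\exp O(r)}$ still yields a quantity of the form $(nd)^{\exp O(r)}(h+1)$. Everything else is a routine unravelling of the quoted proposition.
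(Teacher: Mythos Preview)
Your proposal is correct and follows exactly the same approach as the paper: the paper also applies Proposition~\ref{P_newdomaingenerators}(ii) to the four elements $\de,\de^{-1},D_F,D_F^{-1}$ with representatives $(\til{\de},1)$, $(1,\til{\de})$, $(D_{\til{F}},1)$, $(1,D_{\til{F}})$, invokes Lemma~\ref{L_bound_h_deg_DFtil} and the hypotheses \eqref{Thue_h_deg_assumption}/\eqref{hypersuper_h_deg_assumption} to bound $d^{**},h^{**}$, and then absorbs all auxiliary factors into $(nd)^{\exp O(r)}$. Your more explicit bookkeeping of the binomial term is a harmless elaboration of what the paper leaves implicit.
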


In the case $q>0$, $z_1, \dots , z_q$ are algebraically independent. Thus, for $q\geq 0$, $A_0$ is a unique factorization domain, and hence the
greatest common divisor of a finite set of elements of $A_0$ is well defined and up to sign uniquely determined. We associate with every element $\al\in K$ the
up to sign unique tuple $P_{\al,0}, \dots, P_{\al, D-1}, Q_{\al}$ of elements of $A_0$ such that
\begin{equation}\label{representation_typeII}
\al=Q_{\al}^{-1}\sum_{j=0}^{D-1}P_{\al,j}w^j \quad \text{with} \quad Q_{\al}\ne 0, \ \ \gcd(P_{\al,0}, \dots, P_{\al, D-1}, Q_{\al})=1.
\end{equation}
We put
\begin{equation}\label{degbar_hbar}
\begin{cases}
&\odeg \al:=\max(\deg P_{\al,0}, \dots, \deg P_{\al, D-1}, \deg Q_{\al}) \\
&\oh(\al):=\max(h(P_{\al,0}), \dots, h(P_{\al, D-1}), h(Q_{\al})),
\end{cases}
\end{equation}
where as usual, $\deg P$, $h(P)$ denote the total degree and logarithmic height
of a polynomial $P$ with rational integral coefficients.
Thus for $q=0$ we have $\odeg \al=0$ and $\oh(\al)=\log\max(|P_{\al,0}|, \dots, |P_{\al, D-1}|, |Q_{\al}|)$.

\begin{lemma}\label{L_degbar_hbar_by_d*_h*}
Let $\al\in K^*$ and let $(a,b)$ be a pair of representatives for $\al$ with $a,b\in \Z[X_1, \dots , X_r]$, $b \not\in I$.
Put
$$
d^*=\max(d_0, \deg a, \deg b) \quad \text{and} \quad h^*:=\max(h_0,h(a),h(b)).
$$
Then
\begin{equation}\label{bound_degbar_hbar_by_d*_h*}
\odeg \al \leq (2d^*)^{\exp O(r)}, \quad\quad \oh(\al)\leq (2d^*)^{\exp O(r)}(h^*+1).
\end{equation}
\end{lemma}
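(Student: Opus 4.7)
The plan is to derive the bounds by explicitly computing the representation \eqref{representation_typeII} of $\alpha$ from the pair $(a,b)$, using the primitive element $w$ supplied by Proposition~\ref{P_newdomaingenerators}~(i). First I would invoke the construction underlying that proposition (the full statement being in \cite{EGy9}, Prop.\ 3.4 and its auxiliary lemmas): besides producing $w$ with minimal polynomial $\F(X)\in A_0[X]$ of degree $D\le d_0^\rho$ and size bounded by $(2d_0)^{\exp O(r)}$ resp.\ $(2d_0)^{\exp O(r)}(h_0+1)$, it also yields representations $z_i = G_i(w)/H$ for $i=1,\dots,r$, with $G_i\in A_0[X]$ of degree less than $D$ and $H\in A_0\setminus\{0\}$ of comparable size.

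Substituting these into the given representatives gives
\[
a(z_1,\dots,z_r)=\hat a(w)/H^{\deg a},\qquad b(z_1,\dots,z_r)=\hat b(w)/H^{\deg b},
\]
with $\hat a,\hat b\in A_0[X]$. Reducing $\hat a$ and $\hat b$ modulo $\F(X)$ we may assume their $X$-degree is $<D$; the sizes of the remainders after polynomial division over $A_0$ are controlled by the sizes of $\hat a,\hat b$ and $\F$. Since $\hat a,\hat b$ are obtained by substituting $G_i(w)/H$ into polynomials of total degree $\le d^*$ and logarithmic height $\le h^*$, their degrees and heights are still of the form $(2d^*)^{\exp O(r)}$ resp.\ $(2d^*)^{\exp O(r)}(h^*+1)$.

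Next I would rationalise $\alpha = H^{\deg b-\deg a}\hat a(w)/\hat b(w)$ by multiplying numerator and denominator with $\prod_{j=2}^{D}\hat b(w_j)$, where $w=w_1,\dots,w_D$ are the conjugates of $w$ over $K_0$. The new denominator equals $H^{\deg b-\deg a}\cdot N_{K/K_0}(\hat b(w))\in A_0\setminus\{0\}$ (nonzero because $\alpha\in K^*$), and the numerator, being a symmetric polynomial in $w_2,\dots,w_D$ with coefficients polynomial in $w$, lies in $A_0[w]$ by the fundamental theorem on symmetric functions together with $\F\in A_0[X]$. Reducing once more modulo $\F$ yields
\[
\alpha = Q^{-1}\sum_{j=0}^{D-1}P_j w^j,\qquad P_j,Q\in A_0,
\]
with degrees and heights again of the stated form, the constants absorbing the $D\le d_0^{\rho}$ factors coming from the norm and a further polynomial division. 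Finally, cancelling the gcd of $P_0,\dots,P_{D-1},Q$ in the unique factorisation domain $A_0$ does not raise degrees and, by a Gauss-lemma argument, does not raise heights; the surviving coprime tuple is $(P_{\alpha,0},\dots,P_{\alpha,D-1},Q_\alpha)$ and yields \eqref{bound_degbar_hbar_by_d*_h*}.

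The main bookkeeping obstacle is the norm step: one multiplies $D$ conjugates of a polynomial in $w$ whose $X$-degree and coefficient heights are already $(2d^*)^{\exp O(r)}$, so a priori one could fear a $D$-fold exponential blow-up. However $D\le d_0^\rho\le (2d^*)^{\exp O(r)}$, and all intermediate reductions modulo $\F$ introduce only polynomial factors in the sizes of $\F$; since the $\exp O(r)$ in the exponent absorbs arbitrary polynomial-in-$r$ towers, the final estimates match. In fact this lemma is essentially Lemma~3.5 of \cite{EGy9}, so once that is invoked the proof reduces to checking that the derivations above fit within the uniform $(2d^*)^{\exp O(r)}$ bookkeeping.
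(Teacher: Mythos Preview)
The paper's own ``proof'' is a one-line citation: this is Lemma~3.5 of Evertse--Gy\H{o}ry \cite{EGy9}, and you correctly identify this at the end of your proposal. So as far as matching the paper goes, your final sentence already suffices.

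Your sketch goes further and outlines how such a lemma is actually proved, which is useful and broadly along the lines one expects from \cite{EGy9}: express the $z_i$ as polynomials in $w$ over $A_0$ with a common denominator, substitute into $a,b$, reduce modulo $\F$, clear the denominator by a norm, and cancel the gcd. Two small points are worth tightening. First, in the rationalisation step your new denominator should be $N_{K/K_0}(\hat b(w))$ times the appropriate power of $H$; the sign of the exponent in $H^{\deg b-\deg a}$ depends on which of $\deg a,\deg b$ is larger, so it is cleaner to clear both by $H^{\max(\deg a,\deg b)}$ from the start. Second, the claim that cancelling the gcd in $A_0$ ``does not raise heights'' is not literally true: dividing a multivariate integer polynomial by a factor can increase the maximum coefficient, but only by an amount controlled by the degrees (via a Gelfond/Mahler-type inequality), and this is absorbed into the $(2d^*)^{\exp O(r)}$ bound. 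With these caveats your bookkeeping argument is sound.
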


\begin{proof}
This is Lemma 3.5 in Evertse and Gy\H ory \cite{EGy9}.
\end{proof}

\begin{lemma}\label{L_boundedrep}
Let $\al$ be a nonzero element of $A$, and put
$$
\widehat{d}:=\max(d_0, \odeg \al), \quad\quad \widehat{h}:=\max(h_0, \oh(\al)).
$$
Then $\al$ has a representative $\til{\al}\in \Z[X_1, \dots , X_r]$ such that
\begin{equation}\label{bound_deg_h_by_dtil_htil}
\left\{\begin{array}{l}
\deg \til{\al} \leq (2\widehat{d})^{\exp O(r\log^* r)}(\widehat{h}+1),
\\
h(\til{\al})\leq (2\widehat{d})^{\exp O(r\log^* r)}(\widehat{h}+1)^{r+1}.
\end{array}\right.
\end{equation}
\end{lemma}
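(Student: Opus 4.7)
The goal is to upgrade the canonical ``type~II'' representation $\alpha = Q_\alpha^{-1}\sum_{j=0}^{D-1}P_{\alpha,j}w^j$ of \eqref{representation_typeII} into an honest polynomial representative $\til{\al}\in\Z[X_1,\dots,X_r]$ of $\alpha$, with the required control on degree and height. There are three stages: produce a representative for the primitive element $w$; assemble polynomial representatives for the numerator $Q_\alpha\alpha$ and the denominator $Q_\alpha$; then ``divide'' by $Q_\alpha$ modulo $I$ by solving an ideal-membership problem quantitatively.

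\emph{Step 1 (representing $w$).} First choose $\til{w}\in\Z[X_1,\dots,X_r]$ representing the primitive element $w$ from Proposition~\ref{P_newdomaingenerators}(i). Since $w$ is constructed in \cite{EGy9} as a $\Z$-linear combination of $z_1,\dots,z_r$ with small coefficients, one may take $\til{w}$ of degree $1$ and height bounded polynomially in $\log D$.

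\emph{Step 2 (clearing the denominator).} Using that $X_1,\dots,X_q$ themselves represent $z_1,\dots,z_q$, set
$$
R:=\sum_{j=0}^{D-1}P_{\alpha,j}(X_1,\dots,X_q)\,\til{w}^{\,j},\qquad S:=Q_\alpha(X_1,\dots,X_q),
$$
so that $R,S\in\Z[X_1,\dots,X_r]$ represent $Q_\alpha\alpha$ and $Q_\alpha$, respectively. A direct estimate using $D\leq d_0^{\rho}$ and the hypotheses $\odeg\alpha\leq\widehat d$, $\oh(\alpha)\leq\widehat h$ yields
$$
\deg R,\deg S\leq(2\widehat d)^{\exp O(r)},\qquad h(R),h(S)\leq(2\widehat d)^{\exp O(r)}(\widehat h+1).
$$

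\emph{Step 3 (division modulo $I$).} Since $\alpha = R(z_1,\dots,z_r)/S(z_1,\dots,z_r)$ is assumed to lie in $A$, there exists $\til{\al}\in\Z[X_1,\dots,X_r]$ with $S\cdot\til{\al}-R\in I=(f_1,\dots,f_t)$, i.e.\ $S\til{\al}-\sum_i g_i f_i = R$ for suitable $g_i\in\Z[X_1,\dots,X_r]$. Solving this linear equation quantitatively, by means of the effective ideal-membership / division estimates of Evertse--Gy\H{o}ry \cite{EGy9} (a Hermann/Aschenbrenner-type bound for polynomial rings over $\Z$ modulo a prime ideal), produces $\til{\al}$ satisfying \eqref{bound_deg_h_by_dtil_htil}.

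\emph{Main obstacle.} Step~3 is the heart of the matter. Existence of $\til{\al}$ is immediate, but the quantitative control is not: both the tower $\exp O(r\log^* r)$ in the base $2\widehat d$ and the polynomial factor $(\widehat h+1)^{r+1}$ in the height come from the known complexity of effective division in $\Z[X_1,\dots,X_r]$ modulo a finitely generated ideal, and not from the elementary manipulations in Steps~1 and~2. In other words, the lemma is essentially a packaging of the quantitative commutative algebra of \cite{EGy9} in the precise form needed to convert ``bounded type~II representation'' into ``bounded polynomial representative''.
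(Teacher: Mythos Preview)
Your outline is correct and matches the paper's approach: the paper simply invokes \cite[Lemma~3.7]{EGy9} (with $\lambda=1$, $a=b=1$), whose proof is based on Aschenbrenner's effective ideal-membership bounds, and your three steps reconstruct precisely that argument---represent $w$, clear the denominator $Q_\alpha$, then solve $S\til{\al}\equiv R\pmod{I}$ quantitatively via Aschenbrenner. Your identification of Step~3 as the source of the $\exp O(r\log^* r)$ and $(\widehat h+1)^{r+1}$ factors is exactly right.
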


\begin{proof}
This is a special case of Lemma 3.7 of Evertse and Gy\H ory \cite{EGy9} with the choice $\lambda=1$ and $a=b=1$. The proof of this lemma
is based on work of Aschenbrenner \cite{Aschenbrenner04}.
\end{proof}

\subsection{Thue equations}

Recall that $A_0=\Z[z_1, \dots , z_q]$, $K_0=\Q(z_1, \dots , z_q)$ if $q>0$, and $A_0=\Z$, $K_0=\Q$ if $q=0$, and that in the case $q=0$
total degrees and $\odeg$-s are always zero. Further, we have
$$
F(X,Y)=a_0X^n+a_1X^{n-1}Y+\dots+a_nY^n \in A[X,Y]
$$
with $n\geq 3$ and with discriminant $D_F \ne 0$, and $\de \in A\setminus \{ 0 \}$. Recall that for
$a_0, a_1, \dots , a_n, \de$ we have chosen representatives $\til{a_0}, \til{a_1}, \dots , \til{a_n}, \til{\de}\in \Z[X_1, \dots , X_r]$
satisfying \eqref{Thue_h_deg_assumption}.

Theorem \ref{T_Thue} will be deduced from the following Proposition, which makes sense also if $q=0$. The proof of this proposition is given
in Sections \ref{S_boundthedegree}--\ref{S_boundtheheight}.

\begin{proposition}\label{P_Thue_over_B}
Let $w$ and $f$ be as in Propositions \ref{P_newdomaingenerators}, (i) and \ref{P_newdomaingenerators_spec_case}, respectively, with the properties specified there, and consider the integral domain
$$
B:=A_0[f^{-1}, w].
$$
Then for the solutions $x,y$ of the equation
\begin{equation} \label{Thue_over_B}
F(x,y)=\de \quad\quad \text{in} \quad x,y\in B
\end{equation}
we have
\begin{align}
\label{Thue_degbound_B} &\odeg x, \  \odeg y \leq (nd)^{\exp O(r)},\\
\label{Thue_hbound_B} &\oh(x),\ \oh(y) \leq \exp \left(n!(nd)^{\exp{O(r)}}(h+1) \right).
\end{align}
\end{proposition}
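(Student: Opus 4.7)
My plan is to split the proof of Proposition \ref{P_Thue_over_B} into a degree bound and a height bound, matching the two sections cited by the authors. In the case $q=0$ there is nothing to bound in degree (all $\odeg$ are zero by definition), $A_0=\Zz$, and $K=K_0(w)$ is a number field of degree $D\leq d_0^{\,r}\leq d^{\,r}$. By Proposition \ref{P_newdomaingenerators} the generator $w$ has an explicit integral minimal polynomial of controlled height; by Proposition \ref{P_newdomaingenerators_spec_case} and Lemmas \ref{L_bound_h_deg_DFtil}, \ref{L_degbar_hbar_by_d*_h*} the coefficients $a_i$ and $\de$ viewed inside $B=\Zz[w,f^{-1}]$ have heights bounded by $(nd)^{\exp O(r)}(h+1)$. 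I would then feed \eqref{Thue_over_B} into a Baker-type effective theorem for Thue equations over number fields (in a form with explicit $S$-integer denominators coming from $f^{-1}$); the standard shape of such a bound is $\exp\bigl(n!\cdot\mathrm{poly}(D,h,\ldots)\bigr)$, which after substitution yields exactly \eqref{Thue_hbound_B} in this case.

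Now assume $q>0$. For the degree bound \eqref{Thue_degbound_B} I would apply an effective function-field version of the Thue theorem (of Mason/Schmidt/Gy\H ory type, as used already in \cite{EGy9}). Regard $F(x,y)=\de$ as a Thue equation over the function field $K/K_0$, with coefficients $a_i,\de\in B$, where the relevant valuations are the places of $K/K_0$ and the function-field height of a coefficient is controlled by its $\odeg$; by Lemma \ref{L_bound_h_deg_DFtil} and Proposition \ref{P_newdomaingenerators_spec_case} all these $\odeg$ are at most $(nd)^{\exp O(r)}$. The function-field Baker analogue then yields the same-shape bound on $\odeg x$ and $\odeg y$, which is \eqref{Thue_degbound_B}.

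For the height bound \eqref{Thue_hbound_B} in the transcendental case I would invoke the Evertse--Gy\H ory specialization method from \cite{EGy9}. The idea is to choose a grid of specializations $(z_1,\ldots,z_q)\mapsto(\omega_1,\ldots,\omega_q)\in\OQq^{\,q}$ that (a) do not annihilate $f$, $\tilde{\de}$, $D_{\tilde F}$ or the leading coefficient data for $w$, so that $B$ specializes into a number field $\overline K$ of degree $\leq D$; (b) have height controlled by $(nd)^{\exp O(r)}(h+1)$, so that the specialized coefficients $\overline{a_i}$ and $\overline{\de}$ keep a Weil height of the same order. Each solution $(x,y)\in B^2$ then specializes to a solution of a non-degenerate Thue equation over a number field, to which the Baker-type bound of the first paragraph applies and returns heights bounded by $\exp\bigl(n!(nd)^{\exp O(r)}(h+1)\bigr)$. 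Since the $\odeg$ of $P_{x,j}$ and $Q_x$ in the normal form \eqref{representation_typeII} are already bounded by Step 2, finitely many such specializations combined through the interpolation/elimination procedure of \cite{EGy9} reconstruct effective bounds on the integer coefficients of the $P_{x,j}$ and $Q_x$, giving \eqref{Thue_hbound_B}.

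The main obstacle is the last step: one needs sufficiently many specializations, each with explicit control of its height, of the height of the image of every coefficient, and of the image of $f$ and $D_F$, so that Baker's bound applies uniformly and the interpolation back to $A_0$ is effective. A secondary delicate point is that $x,y$ lie in $B=A_0[w,f^{-1}]$ rather than in the ring of integers of $K$, so one must multiply through by an appropriate power of $f$ and absorb it into $\de$ before applying a number-field Thue bound, and then undo this at the end without losing more than $(nd)^{\exp O(r)}(h+1)$ in the height. Both issues are handled by the machinery of \cite{EGy9}, and I would expect to import that machinery essentially verbatim, substituting the effective Thue bound over number fields for the effective unit-equation bound used there.
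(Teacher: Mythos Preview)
Your outline is correct and matches the paper's approach: the degree bound \eqref{Thue_degbound_B} is obtained from a function-field Thue theorem (the paper uses Schmidt's result, Proposition~\ref{P_Thue_over_fnfields}, applied over the one-variable function fields $M_i/\obK_i$ for each $i=1,\ldots,q$ and then reassembled via Lemma~\ref{L_bound_degbar_by_sumofheightsofconjugates}), and the height bound \eqref{Thue_hbound_B} is obtained by specializing to integer points $\uv\in\Zz^q$, applying the Gy\H{o}ry--Yu number-field Thue bound (Proposition~\ref{P_Thue_over_NF}) over the splitting field of $F_{\uv,j}$, and reconstructing $\oh(x),\oh(y)$ via Lemma~\ref{L_bound_hbar by_ha_of conjugates of specializations}. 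The only notable technical deviation is that the paper does not clear $f$-denominators and absorb them into $\de$ as you suggest; instead it enlarges the set $S$ of places to include those lying over the rational primes dividing $f(\uv)$, so that $\vp_{\uv,j}(B)\subseteq\OO_S$ directly---this is cleaner and avoids tracking an unknown power of $f$, but your variant would also work.
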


We now deduce Theorem \ref{T_Thue} from Proposition \ref{P_Thue_over_B}.

\begin{proof}[Proof of Theorem \ref{T_Thue}]
Let $x,y$ be a solution of equation \eqref{Thue}. In view of \eqref{extended_domain_IV} $x,y$ is also a solution
in $B=A_0[f^{-1},w]$, where $f,w$ satisfy the conditions specified in Propositions \ref{P_newdomaingenerators}, (i) and \ref{P_newdomaingenerators_spec_case}, respectively. Then by
Proposition \ref{P_Thue_over_B}, the inequalities  \eqref{Thue_degbound_B} and \eqref{Thue_hbound_B} hold. Applying now Lemma \ref{L_boundedrep}
to $x$ and $y$, we infer that $x,y$ have representatives $\til{x}, \til{y}$ in $Z[X_1, \dots, X_r]$ with \eqref{Thue_boundsintheorem}.
\end{proof}

\subsection{Hyper- and superelliptic equations}

Recall that the polynomial
$$
F(X)=a_0X^n+a_1X^{n-1}+\dots+a_n \in A[X]
$$
has discriminant $D_F \ne 0$, that $\de \in A\setminus \{ 0 \}$,
and that for
$a_0, a_1, \dots , a_n, \de$ we have chosen representatives $\til{a_0}, \til{a_1}, \dots , \til{a_n}, \til{\de}\in \Z[X_1, \dots , X_r]$
satisfying \eqref{hypersuper_h_deg_assumption}.

Theorem \ref{T_hypersuper} will be deduced from the following Proposition, which has a meaning also if $q=0$.
Similarly as its analogue for Thue equations,
its proof is given in Sections \ref{S_boundthedegree}--\ref{S_boundtheheight}.

\begin{proposition}\label{P_hypersuper_over_B}
Let $w$ and $f$ be as in Propositions \ref{P_newdomaingenerators}, (i) and \ref{P_newdomaingenerators_spec_case}, respectively,
with the properties specified there, and consider the domain
$$
B:=A_0[f^{-1}, w].
$$
Further, let $m$ be an integer $\geq 2$, and assume that $n\geq 3$ if $m=2$
and $n\geq 2$ if $m\geq 3$.
Then for the solutions $x,y$ of the equation
\begin{equation} \label{hypersuper_over_B}
F(x)=\de y^m \quad\quad \text{in} \quad x,y\in B
\end{equation}
we have
\begin{align}
\label{hypersuper_degbound_B} &\odeg x,\ m\odeg y \leq (nd)^{\exp O(r)},
\\
\label{hypersuper_hbound_B} &
\oh(x),\ \oh(y) \leq \exp \left( m^3(nd)^{exp O(r)}(h+1) \right)
\end{align}
\end{proposition}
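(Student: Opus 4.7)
The plan is to mirror the proof structure of Proposition \ref{P_Thue_over_B}, splitting the argument into a degree bound (to be carried out in Section \ref{S_boundthedegree}) and a height bound (Section \ref{S_boundtheheight}), and combining effective Baker--Brindza type bounds for hyper- and superelliptic equations over number fields with effective function-field analogues, glued together by the specialization method of Evertse--Gy\H{o}ry \cite{EGy9}.

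First I would establish the degree bound \eqref{hypersuper_degbound_B}. For a solution $x,y\in B$ I write the canonical decompositions $x=Q_x^{-1}\sum_{j=0}^{D-1}P_{x,j}w^j$ and similarly for $y$, and view the equation $F(x)=\de y^m$ inside the field $K=K_0(w)$. Since $f$ is a unit in $B$ and Proposition \ref{P_newdomaingenerators_spec_case} guarantees that $\de$ and $D_F$ are units as well, an effective hyper- or superelliptic theorem over the function field $K$, applied with respect to a finite set $S$ of places containing those where $f$, $\de$, $D_F$ or $a_0$ have a zero or pole, yields $\odeg x\leq (nd)^{\exp O(r)}$. Feeding this back into the equation, the identity $F(x)=\de y^m$ then controls $\odeg(y^m)=m\odeg y$, which gives the claimed bound on $m\odeg y$ as well.

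Second, using the already established degree bound, I would bound the logarithmic heights $\oh(x),\oh(y)$ by specialization. Applying a homomorphism $\varphi:B\to \oQ$ obtained by sending $(z_1,\dots,z_q)$ to a carefully chosen algebraic tuple $\zeta$, one preserves the minimal polynomial $\mathcal{F}$ of $w$ as separable and keeps $f(\zeta)$, $\varphi(\de)$, $\varphi(D_F)$ and the chosen denominators non-zero. The specialized equation $\varphi(F)(\varphi(x))=\varphi(\de)\varphi(y)^m$ is then a hyper- or superelliptic equation over a number field of controlled degree and discriminant, to which an effective Brindza-type theorem \cite{B5},\cite{B6} applies and produces a bound of the form $\exp(m^3(nd)^{\exp O(r)}(h+1))$ for $h(\varphi(x))$ and $h(\varphi(y))$; the characteristic $m^3$ dependence enters here, from the number-field input. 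Running the specialization at sufficiently many points $\zeta$, in a quantity dictated by the already-proved degree bounds on the polynomials $P_{x,j},Q_x,P_{y,j},Q_y$, and recovering their coefficients by interpolation (using Lemma \ref{L_degbar_hbar_by_d*_h*}), yields \eqref{hypersuper_hbound_B}.

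The main obstacle is the second step, in two respects. The specialization strategy only works once one already controls $\odeg x,\odeg y$, since the number of specialization points needed to reconstruct the polynomials $P_{\cdot,j},Q_{\cdot}$ is governed by their degrees; this is why the degree bound must be produced first, by purely function-field methods. Moreover, the $m^3$ factor in \eqref{hypersuper_hbound_B} has to be inherited cleanly from the number-field theorem, since any extra multiplicative factor of $m$ introduced during the specialization bookkeeping or during the passage back from $\varphi(x),\varphi(y)$ to $x,y$ would spoil the stated bound. The Evertse--Gy\H{o}ry apparatus of \cite{EGy9} is designed precisely to keep this accounting under control, and essentially the same reduction as in the Thue case applies; the only real novelty is that the Baker--Thue input is replaced by the Baker--Brindza input.
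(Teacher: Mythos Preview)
Your proposal is essentially correct and follows the same two-step architecture as the paper: first bound $\odeg x$ and $m\odeg y$ via effective function-field results (Mason's theorems, here recorded as Propositions~\ref{P_superelliptic_over_fnfileds} and~\ref{P_hyperelliptic_over_fnfileds}), then bound $\oh(x),\oh(y)$ by specializing to integer tuples $\uv\in\Z^q$ and invoking effective number-field bounds for hyper- and superelliptic equations.

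A few corrections of detail. For the degree bound, the paper does not work over the single function field $K$: since $\odeg$ involves all variables $z_1,\dots,z_q$, one must treat each $z_i$ in turn as the transcendental variable, work over the function fields $M_i=\obK_i(z_i,w^{(1)},\dots,w^{(D)})$, and then reassemble the bounds via Lemma~\ref{L_bound_degbar_by_sumofheightsofconjugates}. For the height bound, the recovery of $\oh(x),\oh(y)$ from the specialized heights is not done with Lemma~\ref{L_degbar_hbar_by_d*_h*} (which goes the other direction), but with Lemma~\ref{L_bound_hbar by_ha_of conjugates of specializations}; and the specializations are at \emph{rational integer} points $\uv\in\Z^q$, not algebraic tuples. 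Finally, the number-field input used is \cite{BA18} (Propositions~\ref{P_super_NF} and~\ref{P_hyper_NF}), whose bounds carry the explicit $c_3^{m^3}$ dependence responsible for the $m^3$ in \eqref{hypersuper_hbound_B}; citing Brindza \cite{B5},\cite{B6} alone would not guarantee this exact shape.
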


We now deduce Theorem \ref{T_hypersuper} from
Proposition \ref{P_hypersuper_over_B}.

\begin{proof}[Proof of Theorem \ref{T_hypersuper}]
Let $x,y$ be a solution of equation \eqref{hypersuper}. In view of \eqref{extended_domain_IV} $x,y$ is also a solution
in $B=A_0[f^{-1},w]$, where $f,w$ satisfy the conditions specified in Propositions \ref{P_newdomaingenerators}, (i) and \ref{P_newdomaingenerators_spec_case}, respectively. Then by
Proposition \ref{P_hypersuper_over_B}, \eqref{hypersuper_degbound_B} and \eqref{hypersuper_hbound_B} hold. Applying now Lemma \ref{L_boundedrep}
to $x$ and $y$, we infer that $x,y$ have representatives $\til{x}, \til{y}$ in $Z[X_1, \dots, X_r]$ with \eqref{hypersuper_boundsintheorem}.
\end{proof}

\begin{proposition}\label{P_ST_over_B}
Suppose that equation \eqref{hypersuper_over_B}
has a solution $x\in B$, $y\in B\cap\OQq$ and that also
$y\not= 0$ and $y$ is not a root of unity. Then
\begin{equation}\label{bound_ST_over_B}
m\leq \exp \left( (nd)^{\exp O(r)} (h+1) \right).
\end{equation}
\end{proposition}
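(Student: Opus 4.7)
The plan is to reduce equation \eqref{hypersuper_over_B} to a superelliptic equation over a number field by specialization of the transcendence basis of $K_0$, in the spirit of Evertse--Gy\H ory \cite{EGy9}, and then apply the classical effective Schinzel--Tijdeman theorem over number fields. The crucial input is that $y\in B\cap\OQq$ lies in the algebraic closure $L$ of $\Qq$ inside $K$, which is a finite extension of $\Qq$. In particular $y$ already is an algebraic number in the usual sense, and under any ring homomorphism $\varphi:B\to\OQq$ the image $\varphi(y)$ is a $\Qq$-Galois conjugate of $y$, so $\varphi(y)$ remains nonzero and not a root of unity and $h(\varphi(y))=h(y)$.

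When $q=0$, $B$ is already a subring of a number field, and one applies Brindza's effective Schinzel--Tijdeman theorem \cite{B5} directly, combined with a standard lower bound $h(y)\geq c/[\Qq(y):\Qq]$ (for instance Blanksby--Montgomery), to obtain the bound $m\leq\exp\bigl((nd)^{\exp O(r)}(h+1)\bigr)$ after inserting the height/degree data from Proposition \ref{P_newdomaingenerators} and Lemma \ref{L_bound_h_deg_DFtil}. When $q>0$, I would use the specialization machinery of \cite{EGy9}, applied to the nonzero element $f\cdot\de\cdot D_F\in A_0[w]$, to construct a ring homomorphism $\varphi:B\to\OQq$, given by a substitution $(z_1,\ldots,z_q)\mapsto(\xi_1,\ldots,\xi_q)\in\OQq^q$ together with a choice of $\varphi(w)\in\OQq$ as a root of $\varphi(\F)$, such that $\varphi(f),\varphi(\de),\varphi(D_F)$ are all nonzero and moreover the number field $L':=\Qq(\xi_1,\ldots,\xi_q,\varphi(w))$ has degree $[L':\Qq]\leq(nd)^{\exp O(r)}$ while the coefficients of $\varphi(F)$ and $\varphi(\de)$ have heights at most $(nd)^{\exp O(r)}(h+1)$. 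Applying $\varphi$ to \eqref{hypersuper_over_B} yields the number-field superelliptic equation
\begin{equation*}
 \varphi(F)\bigl(\varphi(x)\bigr)=\varphi(\de)\,\varphi(y)^m
\end{equation*}
with $\varphi(F)$ of degree $n\geq 2$ and nonzero discriminant, to which Brindza's effective theorem applies; combined with the lower bound for $h(\varphi(y))=h(y)$ just mentioned, this bounds $m$ by a quantity exponential in the coefficient heights and polynomial in $[L':\Qq]$, producing the stated bound $m\leq\exp\bigl((nd)^{\exp O(r)}(h+1)\bigr)$.

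The main obstacle is the specialization step for $q>0$: arranging $(\xi_1,\ldots,\xi_q)$ so that $\varphi$ simultaneously avoids the zero loci of $f,\de,D_F$ in $B$ while keeping both $[L':\Qq]$ and the heights of the specialized coefficients polynomially controlled in $n,d,h$ (and exponentially in $r$). This is the technical heart of the Evertse--Gy\H ory framework, built on effective quantitative Hilbert-irreducibility-type arguments together with Liouville-style lower bounds at the chosen specialization point; its input is precisely the degree and height data assembled in Proposition \ref{P_newdomaingenerators} and Lemma \ref{L_bound_h_deg_DFtil}. Once this specialization is in place, the remaining argument is an immediate application of classical number-field machinery.
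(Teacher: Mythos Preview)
Your overall strategy---specialize to a number field and invoke an effective Schinzel--Tijdeman result there, using crucially that $y\in\OQq$ forces $\varphi(y)$ to be a conjugate of $y$ and hence neither zero nor a root of unity---is exactly the paper's approach. But your description of the mechanics is more complicated than what is actually needed, in two respects.

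First, the specialization in \cite{EGy9} (and in this paper) is much more elementary than you suggest: one takes $\uv=(u_1,\ldots,u_q)\in\Zz^q$ with $|\uv|$ bounded and $\HH(\uv)\neq 0$, where $\HH=\Delta_{\F}\cdot\F_D\cdot f$; no Hilbert-irreducibility or Liouville-type arguments enter. The field $K_{\uv,j}=\Qq(w^{(j)}(\uv))$ then automatically has degree $\leq D\leq d^r$, since $w^{(j)}(\uv)$ is a root of the degree-$D$ polynomial $\F_{\uv}$. Moreover you do not need to separately arrange $\varphi(\de),\varphi(D_F)\neq 0$: Proposition~\ref{P_newdomaingenerators_spec_case} has already made $\de,D_F$ units of $B$, so every ring homomorphism $B\to\OQq$ sends them to nonzero elements. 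You also need to track that $\varphi_{\uv,j}(B)\subseteq\OO_S$ for the set $S$ of places of $K_{\uv,j}$ lying above $\infty$ and the prime divisors of $f(\uv)$; this is what allows the number-field result to be applied at all.

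Second, the paper does not go via ``Brindza's theorem plus a Dobrowolski/Blanksby--Montgomery lower bound for $h(y)$''. It applies Proposition~\ref{P_ST} (Theorem~2.3 of \cite{BA18}), which gives a direct upper bound for $m$ in terms of $|\Delta_L|$, $P$, $s$, $d_L$ and $\widehat{h}$, with no reference to $h(y)$ at all. One then inserts the estimates \eqref{hhat-bound}, \eqref{DiscKuj_super}, \eqref{bound_super_P_Q}, \eqref{bound_super s} already established in the proof of \eqref{hypersuper_hbound_B} to conclude. Your alternative route would ultimately require an $m$-independent bound on $h(\varphi(x))$, which is precisely the nontrivial content hidden inside Proposition~\ref{P_ST}; so while your sketch is morally correct, it defers rather than avoids the main analytic input.
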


\begin{proof}[Proof of Theorem \ref{T_ST_over_A}]
Let $x,y\in A$, $m\in\Zz_{\geq 2}$ be a solution of equation \eqref{hypersuper}.
First let $y\not\in\OQq$. Then $\odeg y\geq 1$, and together with
\eqref{hypersuper_degbound_B}
this implies \eqref{bound_ST_over_A-alg}.
Next, let $y\in\OQq$. Then Proposition \ref{P_ST_over_B} gives at once
\eqref{bound_ST_over_A}.
\end{proof}

The proof of Proposition \ref{P_ST_over_B} is a combination of results 
from Sections \ref{S_boundthedegree}--\ref{S_boundtheheight}.  
It is completed at the end of Section \ref{S_boundtheheight}.

\section{Bounding the degree}\label{S_boundthedegree}

In this section we shall prove \eqref{Thue_degbound_B} of Proposition \ref{P_Thue_over_B} and \eqref{hypersuper_degbound_B} of
Proposition \ref{P_hypersuper_over_B}.

We recall some results on function fields in one variable. Let $\K$ be an algebraically closed field of characteristic $0$, $z$ a
transcendental element over $\K$ and $M$ a finite extension of $\K(z)$. Denote by $g_{M/\K}$ the genus of $M$, and by $\M_M$ the
collection of valuations of $M/\K$, these are the discrete
valuations of $M$ with value group $\Z$ which are trivial on $\K$. Recall that these valuations satisfy the sum formula
$$
\sum_{v \in \M_M} v(\al) =0 \quad\quad \text{for} \quad \al\in M^*.
$$
For a finite subset $S$ of $\M_M$, an element $\al\in M$ is called
an $S$-integer if $v(\al) \geq 0$ for all $v\in \M_M\setminus S$.
The $S$-integers
form a ring in $M$, denoted by $\OO_S$. The (homogeneous) height of $\av=(\al_1, \dots, \al_l)\in M^l$ relative to $M/\K$ is defined by
$$
H_M(\av)=H_M(\al_1, \dots,\al_l):=-\sum_{v \in \M_M} \min (v(\al_1),\dots, v(\al_l)),
$$
and we define the height $H_M(f)$ of a polynomial $f\in M[X]$ by the height of the vector defined by the coefficients of $f$.
Further, we shall write $H_M(1, \av):=H_M(1,\al_1,\dots, \al_l)$. We note that
\begin{equation}\label{fnfield_heiht_proerties_I}
H_M(\al_i)\leq H_M(\av)\leq H_M(\al_1)+\dots+H_M(\al_l), \quad\quad i=1,\dots,l.
\end{equation}
By the sum formula,
\begin{equation}\label{fnfield_heiht_proerties_II}
H_M(\al \av)= H_M(\av) \quad\quad \text{for} \quad \al\in M^*.
\end{equation}
The height of $\al\in M$ relative to $M/\K$ is defined by
$$
H_M(\al):=H_M(1,\al)=-\sum _{v\in \M_M} \min(0,v(\al)).
$$
It is clear that $H_M(\al)=0$ if and only if $\al\in \K$. Using the sum formula, it is easy to prove that the height
has the properties
\begin{equation}\label{fnfield_heiht_proerties_III}
\begin{aligned}
&H_M(\al^l)= |l|H_M(\al),&&\\
&H_M(\al+\be)\leq H_M(\al)+H_M(\be), \quad\quad &&H_M(\al\be)\leq H_M(\al)+H_M(\be)
\end{aligned}
\end{equation}
for all non-zero $\al,\be \in M$ and for every integer $l$.

If $L$ is a finite extension of $M$, we have
\begin{equation}\label{fnfield_heiht_proerties_IV}
H_L(\al_0, \dots , \al_l)= [L:M] H_M(\al_0, \dots ,\al_l) \quad\quad \text{for} \quad \al_0, \dots, \al_l \in M.
\end{equation}
By $\deg f$ we denote the total degree of $f\in \K[z]$. Then for $f_0, \dots, f_l \in \K[z]$ with $\gcd(f_0, \dots , f_l)=1$ we have
\begin{equation}\label{fnfield_heiht_proerties_V}
H_{\K[z]}(f_0, \dots , f_l)=\max(\deg f_0, \dots , \deg f_l).
\end{equation}

\begin{lemma}\label{L_fnfield_I}
Let $\al_1, \dots , \al_l \in M$ and suppose that
$$
X^l+f_1X^{l-1}+\dots + f_l=(X-\al_1)\dots (X-\al_l)
$$
for certain $f_1, \dots f_l \in \K[z]$. Then
$$
[M:\K(z)]\max(\deg f_1, \dots , \deg f_l)=\sum_{i=1}^l H_M(\al_i).
$$
\end{lemma}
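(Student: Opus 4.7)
The plan is to prove the identity one valuation at a time. First I would rewrite the left-hand side as a sum over $\M_M$: augmenting $(f_1,\ldots,f_l)$ with $f_0:=1$ makes $\gcd(f_0,\ldots,f_l)=1$, so by \eqref{fnfield_heiht_proerties_V} together with \eqref{fnfield_heiht_proerties_IV},
\[
[M:\K(z)]\max_j\deg f_j \;=\; H_M(1,f_1,\ldots,f_l) \;=\; -\sum_{v\in\M_M}\min\bigl(0,v(f_1),\ldots,v(f_l)\bigr).
\]
On the other side, by definition $\sum_i H_M(\alpha_i)=-\sum_{v\in\M_M}\sum_i\min(0,v(\alpha_i))$. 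Hence it suffices to prove the local identity
\[
\min\bigl(0,v(f_1),\ldots,v(f_l)\bigr) \;=\; \sum_{i=1}^l\min\bigl(0,v(\alpha_i)\bigr)
\]
for each $v\in\M_M$ separately.

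Fixing $v$, I would reorder the $\alpha_i$ so that $v(\alpha_1)\le\cdots\le v(\alpha_l)$, let $k$ be the number of indices with $v(\alpha_i)<0$, and set $T_j:=v(\alpha_1)+\cdots+v(\alpha_j)$ with $T_0:=0$. Then $T_k=\sum_i\min(0,v(\alpha_i))$ and $T_k=\min_{0\le j\le l}T_j$. Since $f_j=(-1)^je_j(\alpha_1,\ldots,\alpha_l)$ and every monomial in $e_j$ has valuation at least $T_j$ by the chosen ordering, the ultrametric inequality yields $v(f_j)\ge T_j\ge T_k$ for $j\ge 1$, while $v(f_0)=0\ge T_k$; this gives $\min_jv(f_j)\ge T_k$. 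For the reverse inequality, note that the choice of $k$ as the exact count of negative-valuation roots forces $v(\alpha_k)<0\le v(\alpha_{k+1})$ whenever $0<k<l$ (and the cases $k\in\{0,l\}$ are immediate), so among the size-$k$ subsets appearing in the expansion of $e_k$ the minimum valuation $T_k$ is attained by the unique subset $\{1,\ldots,k\}$; no cancellation is possible and $v(f_k)=T_k$ exactly. Summing the local identity over all $v\in\M_M$ yields the claimed global equality.

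The main obstacle is precisely the reverse inequality $v(f_k)\le T_k$: in general the ultrametric bound $v(f_j)\ge T_j$ is strict, since equal-valuation monomials in $e_j$ can cancel and push the valuation above $T_j$, so one must pinpoint an index at which equality is forced. The key point is that defining $k$ via the sign change of $v(\alpha_i)$ guarantees a strict jump $v(\alpha_k)<v(\alpha_{k+1})$, which singles out a unique dominant monomial in $e_k$ and rules out cancellation at exactly the index that matters.
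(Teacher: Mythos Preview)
Your argument is correct. The paper itself does not prove this lemma at all; it simply cites it as Lemma~4.1 of Evertse and Gy\H{o}ry \cite{EGy9}, so there is no in-paper proof to compare against. Your local computation is the standard Gauss-lemma/Newton-polygon calculation: after ordering the roots by $v$-value, the elementary symmetric polynomial $e_k$ has a unique term of strictly minimal valuation (the product of the $k$ roots of negative $v$-value), so $v(f_k)=T_k$ exactly, while $v(f_j)\ge T_j\ge T_k$ for all $j$. The edge cases $k=0$ (use $v(1)=0$) and $k=l$ (use $f_l=\pm\alpha_1\cdots\alpha_l$) are indeed immediate. One cosmetic remark: when you pass from $\max(\deg f_1,\ldots,\deg f_l)$ to $H_{\K(z)}(1,f_1,\ldots,f_l)$ you are implicitly using $\max(\deg f_1,\ldots,\deg f_l)=\max(0,\deg f_1,\ldots,\deg f_l)$, which holds in every non-degenerate case and matches the intended reading of the statement.
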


\begin{proof}
This is Lemma 4.1 in Evertse and Gy\H ory \cite{EGy9}.
\end{proof}

\begin{lemma}\label{L_fnfield_II}
Let
\[
F=f_0X^l+f_1X^{l-1}+\dots + f_l\in M[X]
\]
be a polynomial with $f_0\not= 0$ and with non-zero discriminant.
Let $L$ be the splitting field over $M$ of $F$.
Then
$$
g_{L/\K}\leq [L:M]\cdot \big(g_{M/\K}+l H_M(F)\big).
$$
In particular, if $M=\K (z)$ and $f_0\kdots f_l\in\K [z]$,
we have
\[
g_{L/\K}\leq [L:M]\cdot l\max (\deg f_0\kdots\deg f_l).
\]
\end{lemma}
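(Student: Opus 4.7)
My plan is to apply the Riemann--Hurwitz formula to the separable Galois extension $L/M$ (separable since $\mathrm{char}\,\K=0$, Galois since $L$ is the splitting field of a separable polynomial). Riemann--Hurwitz reads
\[
2g_{L/\K}-2=[L:M]\bigl(2g_{M/\K}-2\bigr)+\deg\mathfrak{D}_{L/M},
\]
so the whole problem reduces to bounding the degree of the different $\mathfrak{D}_{L/M}$. Since $\mathrm{char}\,\K=0$, the extension $L/M$ is tame at every place $w$ of $L$, giving local different exponent $e_w-1$; summing over $w\mid v$ and using $\sum_{w\mid v}e_wf_w=[L:M]$ shows that the contribution of each ramified place $v$ of $M$ to $\deg\mathfrak{D}_{L/M}$ is at most $[L:M]\cdot\deg_\K v$. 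Hence $\deg\mathfrak{D}_{L/M}\leq[L:M]\cdot\Delta_S$, where $S\subset\M_M$ is the ramification locus of $L/M$ and $\Delta_S:=\sum_{v\in S}\deg_\K v$.

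The next step is to pin down a concrete set containing $S$. Replacing $F$ by a scalar multiple in $M^*$ changes neither $L$ nor $H_M(F)$ (by the sum formula), so one may assume $\min_i v(f_i)\leq 0$ at every place. A Hensel's lemma argument shows that whenever $v(f_i)\geq 0$ for all $i$, $v(f_0)=0$, and $v(D_F)=0$, the reduction of $F$ is a separable polynomial of degree $l$ over the residue field and hence factors over the completion into factors that generate only unramified extensions; so $L/M$ is unramified at $v$. Therefore
\[
S\ \subseteq\ \{v:\min_i v(f_i)<0\}\cup\{v:v(f_0)>0\}\cup\{v:v(D_F)>0\},
\]
and these three sets contribute at most $H_M(F)$, $H_M(f_0)\leq H_M(F)$, and $H_M(D_F)$ respectively to $\Delta_S$.

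The key height estimate, and to my mind the main obstacle, is $H_M(D_F)\leq(2l-2)H_M(F)$. This rests on the fact that the discriminant is weighted-homogeneous of degree $2l-2$ in the coefficients of $F$: from $D_F=f_0^{2l-2}\prod_{i<j}(\al_i-\al_j)^2$ and the invariance of the roots $\al_i$ under scaling $F\mapsto tF$, one sees $D_F\mapsto t^{2l-2}D_F$, so $D_F$ is an integer polynomial of total degree $2l-2$ in $f_0,\dots,f_l$; the height bound then follows from the sub-additivity properties \eqref{fnfield_heiht_proerties_III} of $H_M$ applied monomial-by-monomial together with the sum formula. Combining, $\Delta_S\leq 2l\,H_M(F)$, hence $\deg\mathfrak{D}_{L/M}\leq 2l[L:M]H_M(F)$, and Riemann--Hurwitz rearranges to
\[
g_{L/\K}\leq[L:M]\bigl(g_{M/\K}+lH_M(F)\bigr)-([L:M]-1)\leq[L:M]\bigl(g_{M/\K}+lH_M(F)\bigr).
\]
The ``in particular'' case is then immediate: for $M=\K(z)$ one has $g_{M/\K}=0$, while \eqref{fnfield_heiht_proerties_V} together with the scale-invariance of $H_M$ yields $H_M(F)\leq\max_i\deg f_i$.
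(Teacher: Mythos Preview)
Your proof is correct and follows essentially the same route as the paper: apply Riemann--Hurwitz, observe that $L/M$ can ramify only at places where the leading coefficient or the discriminant of $F$ has non-trivial (normalized) valuation, and bound the number of such places by $O(l)H_M(F)$ using the weighted homogeneity of $D_F$. The paper avoids your scalar normalization by working directly with $v(F):=\min_i v(f_i)$ and the conditions $v(f_0)>v(F)$, $v(D_F)>(2l-2)v(F)$, obtaining the marginally sharper $|S|\leq(2l-1)H_M(F)$; since $\K$ is algebraically closed your $\deg_\K v$ is identically $1$, so your $\Delta_S$ is just $|S|$ and the two arguments coincide.
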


\begin{proof}
The second assertion follows by combining the first assertion
with \eqref{fnfield_heiht_proerties_V}.
We now prove the first assertion. Our proof is a generalization
of that of Lemma H of Schmidt \cite{Sch7}.

For $v\in\M_M$, put $v(F):=\min (v(f_0)\kdots v(f_l))$.
Let $D_F$ denote the discriminant of $F$.
Since $D_F$ is a homogeneous polynomial of degree $2l-2$ in
$f_0\kdots f_l$, we have
\begin{equation}\label{extra-1}
v(D_F)\geq (2l-2)v(F).
\end{equation}
Let $S$ be the set of $v\in\M_M$ with $v(f_0)>v(F)$ or $v(D_F)>(2l-2)v(F)$.
We show that $L/M$ is unramified over every valuation $v\in \M_M\setminus S$.

Take $v\in\M_M\setminus S$. Let
\[
O_v:=\{ x\in M:\, v(x)\geq 0\},\ \ \ m_v:=\{ x\in M:\, v(x)>0\}
\]
denote the local ring at $v$, and the maximal ideal of $O_v$,
respectively. The residue class field $O_v/m_v$ is equal to $\K$
since $\K$ is algebraically closed.
Let $\varphi_v :\, O_v\to \K$
denote the canonical homomorphism.

Without loss of generality, we assume $v(F)=0$. Then $v(f_0)=0$, $v(D_F)=0$.
Let $\varphi_v(F):=\sum_{j=0}^l \varphi_v(f_j)X^{l-j}$.
Then $\varphi_v(f_0)\not= 0$ and $\varphi_v(F)$ has discriminant
$\varphi_v (D_F)\not= 0$. Since $D_F\not= 0$, the polynomial $F$
has $l$ distinct zeros in $L$, $\alpha_1\kdots\alpha_l$, say.
Further, $\varphi_v(F)$ has $l$ distinct zeros in $\K$,
$a_1\kdots a_l$, say.

Denote by $\Sigma_l$ the permutation group on $(1\kdots l)$.
Choose $c_1\kdots c_l\in \K$, such that the numbers
\[
\alpha_{\sigma} :=c_1\alpha_{\sigma (1)}+\cdots +c_l\alpha_{\sigma (l)}\ \
(\sigma\in\Sigma_l)
\]
are all distinct, and the numbers
\[
a_{\sigma}:=c_1a_{\sigma (1)}+\cdots +c_l a_{\sigma (l)}\ \ (\sigma\in\Sigma_l)
\]
are all distinct. Let $\alpha :=c_1\alpha_1+\cdots +c_l\alpha_l$.
Then $L=M(\alpha )$, and the monic minimal polynomial of $\alpha$ over $M$
divides $G:=\prod_{\sigma\in\Sigma_l} (X-\alpha_{\sigma})$
which by the theorem of symmetric functions belongs to $M[X]$.
The image of $G$ under $\varphi_v$ is $\prod_{\sigma\in\Sigma_l} (X-a_{\sigma})$
and this has only simple zeros.
This implies that $L/M$ is unramified at $v$.

For $v\in\M_M$ and any valuation $\in\M_L$ above $v$,
denote by $e(V|v)$ the ramification index of $V$ over $v$.
Recall that $\sum_{V|v} e(V|v)=[L:M]$, where the sum is taken over
all valuations of $L$ lying above $v$.
Now the Riemann-Hurwitz formula implies that
\begin{eqnarray}\label{extra-2}
2g_{L/\K}-2&=& [L:M](2g_K-2)+\sum_{v\in S}\sum_{V|v}(e(V|v)-1)
\\
\nonumber
&\leq& [L:M](2g_K-2+|S|),
\end{eqnarray}
where $|S|$ denotes the cardinality of $S$.
It remains to estimate $|S|$. By the sum formula and \eqref{extra-1}
we have
\begin{eqnarray*}
|S|&\leq&\sum_{v\in S}\Big( (v(f_0)-v(F))+(v(D_F)-(2l-2)v(F))\Big)
\\
&=& -\sum_{v\in S} (2l-1)v(F)-\sum_{v\in\M_M\setminus S} v(f_0)-\sum_{v\in\M_M\setminus S} v(D_F)
\\
&\leq& -(2l-1)\sum_{v\in\M_M} v(F)=(2l-1)H_M(F).
\end{eqnarray*}
By inserting this into \eqref{extra-2} we arrive at an inequality
which is stronger than what we wanted to prove.
\end{proof}

In the sequel we keep the notation of Proposition \ref{P_newdomaingenerators}.
To prove \eqref{Thue_degbound_B} and \eqref{hypersuper_degbound_B} we may suppose that $q>0$ since the case $q=0$ is trivial.
Let again $K_0:=\Q(z_1, \dots , z_q)$, $K:=K_0(w)$, $A_0:=\Z[z_1, \dots , z_q]$, $B:=\Z[z_1, \dots , z_q, f^{-1},w]$ with $f,w$ specified
in Propositions \ref{P_newdomaingenerators} (i) and
\ref{P_newdomaingenerators_spec_case}.

Fix $i\in \{ 1, \dots , q \}$. Let $\K_i:=\Q(z_1, \dots, z_{i-1},z_{i+1}, \dots ,z_q)$ and $\obK_i$ its algebraic closure. Then
$A_0$ is contained in $\obK_i[z_i]$. Denote by $w^{(1)}:=w, \dots, w^{(D)}$ the conjugates of $w$ over $K_0$. Let
$M_i$ denote the splitting field of the polynomial $X^D+\F_1X^{D-1}+\dots +\F_D$ over $\obK_i(z_i)$, that is
$$
M_i:=\obK_i(z_i,w^{(1)}, \dots, w^{(D)}).
$$
Then
$$
B_i:=\obK_i[z_i,f^{-1},w^{(1)}, \dots, w^{(D)}]
$$
is a subring of $M_i$ which contains $B=\Z[z_1, \dots, z_q, f^{-1},w]$ as a subring. Let $\Delta_i:=[M_i:\obK_i(z_i)]$. Further,
let $g_{M_i}$ denote the genus of $M_i/\obK_i$, and $H_{M_i}$ the height taken with respect to $M_i/\obK_i$. Put
\begin{equation}\label{define_d1h1}
d_1:=\max(d_0, \deg f, \deg \F_1, \dots, \deg \F_D).
\end{equation}
We mention that in view of Propositions \ref{P_newdomaingenerators},
\ref{P_newdomaingenerators_spec_case},
\begin{equation}\label{bound_d1_by d}
d_1 \leq (nd)^{\exp O(r)}.
\end{equation}

\begin{lemma}\label{L_bound_degbar_by_sumofheightsofconjugates}
Let $\al\in K^*$ and denote by $\al^{(1)}, \dots, \al^{(D)}$ the conjugates of $\al$ corresponding to $w^{(1)}, \dots, w^{(D)}$. Then
$$
\odeg \al \leq qDd_1+\sum_{i=1}^q \Delta_i^{-1} \sum_{j=1}^D H_{M_i}(\al^{(j)}).
$$
\end{lemma}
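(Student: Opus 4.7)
The plan is to exploit the representation $\alpha=Q_\alpha^{-1}\sum_{j=0}^{D-1}P_{\alpha,j}w^j$ from \eqref{representation_typeII}, reducing the bound on $\odeg\alpha$ to per-variable estimates. Since for any $P\in\Z[z_1,\ldots,z_q]$ the total degree satisfies $\deg P\leq\sum_{i=1}^q\deg_{z_i}P$, one obtains $\odeg\alpha\leq\sum_{i=1}^q h_i$, where $h_i:=\max_j(\deg_{z_i}P_{\alpha,j},\deg_{z_i}Q_\alpha)$. It therefore suffices to prove, for each $i$, the per-variable bound $h_i\leq Dd_1+\Delta_i^{-1}\sum_{j=1}^D H_{M_i}(\alpha^{(j)})$ and sum over $i$.

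A preliminary observation is that the coprimality $\gcd(P_{\alpha,0},\ldots,P_{\alpha,D-1},Q_\alpha)=1$ in $A_0$ from \eqref{representation_typeII} remains valid in the larger ring $\obK_i[z_i]$. Indeed, any non-constant common divisor in $\obK_i[z_i]$ would have a root $\xi\in\obK_i$ which is algebraic over $\Q(z_1,\ldots,z_{i-1},z_{i+1},\ldots,z_q)$; clearing denominators in its minimal polynomial over that field would produce, via Gauss's lemma, an irreducible element of $\Z[z_1,\ldots,z_q]$ dividing each of $P_{\alpha,0},\ldots,P_{\alpha,D-1},Q_\alpha$, contradicting primitivity. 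By \eqref{fnfield_heiht_proerties_V} and \eqref{fnfield_heiht_proerties_IV} this yields $\Delta_i h_i=H_{M_i}(P_{\alpha,0},\ldots,P_{\alpha,D-1},Q_\alpha)$.

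Next, the Vandermonde identities $\sum_k P_{\alpha,k}(w^{(j)})^k=Q_\alpha\alpha^{(j)}$, $j=1,\ldots,D$, express $(P_{\alpha,0},\ldots,P_{\alpha,D-1},Q_\alpha)^T$ as the image of $(Q_\alpha\alpha^{(1)},\ldots,Q_\alpha\alpha^{(D)},Q_\alpha)^T$ under the block matrix $\diag(V^{-1},1)$, where $V$ is the $D\times D$ Vandermonde in the $w^{(j)}$. The standard non-archimedean estimate for heights under a linear transformation, combined with \eqref{fnfield_heiht_proerties_II} to absorb the common factor $Q_\alpha$, gives
\[
H_{M_i}(P_{\alpha,0},\ldots,Q_\alpha)\ \leq\ H_{M_i}(\alpha^{(1)},\ldots,\alpha^{(D)},1)+H^\ast,
\]
where $H^\ast$ is the $M_i$-height of the tuple consisting of the entries of $V^{-1}$ together with $1$; equivalently, after clearing the common denominator $\det V$, the height of the tuple $(\det V,(\mathrm{adj}\,V)_{k,j})$. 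The first term on the right is at most $\sum_{j=1}^D H_{M_i}(\alpha^{(j)})$ by repeated use of \eqref{fnfield_heiht_proerties_III}.

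The main technical obstacle is showing $H^\ast\leq\Delta_i Dd_1$. Lemma \ref{L_fnfield_I} applied to $\F$ supplies the totalized estimate $\sum_{j=1}^D H_{M_i}(w^{(j)})=\Delta_i\max_k\deg_{z_i}\F_k\leq\Delta_i d_1$. Because $\det V=\prod_{l<m}(w^{(m)}-w^{(l)})$ is a product, \eqref{fnfield_heiht_proerties_III} immediately yields $H_{M_i}(\det V)\leq(D-1)\sum_{j=1}^D H_{M_i}(w^{(j)})\leq(D-1)\Delta_i d_1$. Each adjugate entry admits the analogous product expression $(\mathrm{adj}\,V)_{k,j}=\pm e_{D-1-k}(w^{(l)}\colon l\neq j)\cdot\prod_{l<m,\,l,m\neq j}(w^{(m)}-w^{(l)})$, but expanding the elementary symmetric factor into monomials would introduce a combinatorial blow-up exponential in $D$. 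I would therefore estimate $H^\ast$ valuation by valuation, using the Newton polygon of $\F$ at each place of $M_i$ to control $v(w^{(j)})$ and hence $\min_{k,j}v((\mathrm{adj}\,V)_{k,j})$ directly, so that the dependence on $D$ stays linear. Combining these estimates yields $h_i\leq Dd_1+\Delta_i^{-1}\sum_{j=1}^D H_{M_i}(\alpha^{(j)})$, and summation over $i=1,\ldots,q$ finishes the proof.
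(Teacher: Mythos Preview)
The paper does not give its own proof of this lemma; it simply cites Lemma~4.4 of Evertse--Gy\H{o}ry \cite{EGy9}. So there is no in-paper argument to compare against, and your direct proof is a genuine addition. Your overall architecture (reduce $\odeg\alpha$ to the per-variable quantities $h_i$, identify $\Delta_i h_i$ with the $M_i$-height of the tuple $(P_{\alpha,0},\ldots,P_{\alpha,D-1},Q_\alpha)$ via coprimality in $\obK_i[z_i]$, and then pass to $(\alpha^{(1)},\ldots,\alpha^{(D)},1)$ through the Vandermonde system) is correct and is essentially how the result is proved in \cite{EGy9}.

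Where you go astray is in the $H^*$ estimate. Your worry about a ``combinatorial blow-up exponential in $D$'' is misplaced: you are working over a function field, so every valuation is non-archimedean and the ultrametric inequality $v(\sum_k \gamma_k)\ge\min_k v(\gamma_k)$ holds with no penalty for the number of summands. Consequently the naive monomial expansion already gives the bound you need. Every entry of $\mathrm{adj}\,V$, as well as $\det V$, is a polynomial with integer coefficients in $w^{(1)},\ldots,w^{(D)}$, of degree at most $D-1$ in each individual $w^{(l)}$. Hence for any such entry $\gamma$ and any $v\in\M_{M_i}$,
\[
v(\gamma)\ \ge\ \min_{\mathbf e}\sum_{l=1}^{D} e_l\,v(w^{(l)})\ \ge\ (D-1)\sum_{l=1}^{D}\min\big(0,v(w^{(l)})\big),
\]
since integer coefficients have valuation $0$. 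Summing over $v$ and using Lemma~\ref{L_fnfield_I} gives
\[
H^*=H_{M_i}\big(\det V,(\mathrm{adj}\,V)_{k,j}\big)\ \le\ (D-1)\sum_{l=1}^{D}H_{M_i}(w^{(l)})\ \le\ (D-1)\,\Delta_i d_1\ \le\ D\,\Delta_i d_1,
\]
which is exactly what you wanted. The Newton-polygon detour you sketch is unnecessary, and as written it is the only place where your argument is not actually carried out. Replace that paragraph by the two-line ultrametric estimate above and the proof is complete.
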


\begin{proof}
This is Lemma 4.4 in Evertse and Gy\H ory \cite{EGy9}.
\end{proof}

Conversely, we have the following:

\begin{lemma}\label{L_bound_heightsofconjugates_by_degbar}
Let $\al\in K^*$ and $\al^{(1)}, \dots, \al^{(D)}$ be as in Lemma \ref{L_bound_degbar_by_sumofheightsofconjugates}. Then we have
\begin{equation}\label{bound_heightsofconjugates_by_degbar}
\max_{i,j} H_{M_i}(\al^{(j)}) \leq \Delta_i\left(2D \odeg \al + (2d_0)^{\exp O(r)}\right).
\end{equation}
\end{lemma}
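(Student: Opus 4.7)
The plan is to expand $\alpha^{(k)}$ via the canonical representation $\alpha = Q_\alpha^{-1}\sum_{j=0}^{D-1} P_{\alpha, j}w^j$ of \eqref{representation_typeII}, apply the conjugation $w\mapsto w^{(k)}$, and invoke the subadditivity and submultiplicativity properties of the function field height in \eqref{fnfield_heiht_proerties_III}. Combined with the identity $H_{M_i}(\beta^{-1}) = H_{M_i}(\beta)$, which is a consequence of the projective invariance \eqref{fnfield_heiht_proerties_II}, this yields
\[
H_{M_i}(\alpha^{(k)}) \;\leq\; H_{M_i}(Q_\alpha) + \sum_{j=0}^{D-1}H_{M_i}(P_{\alpha,j}) + \tbinom{D}{2}H_{M_i}(w^{(k)}),
\]
so the task reduces to controlling the three types of heights appearing on the right.

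For the first two, I would use that $Q_\alpha$ and the $P_{\alpha,j}$ lie in $A_0\subseteq \obK_i[z_i]$. By \eqref{fnfield_heiht_proerties_V} their heights over $\obK_i(z_i)$ equal their degrees in $z_i$, and these are at most $\odeg\alpha$ by the definition \eqref{degbar_hbar}; multiplying by $\Delta_i = [M_i:\obK_i(z_i)]$ via \eqref{fnfield_heiht_proerties_IV} then gives $H_{M_i}(P_{\alpha,j}),\,H_{M_i}(Q_\alpha) \leq \Delta_i\,\odeg\alpha$. For the third, the natural tool is Lemma \ref{L_fnfield_I} applied to the minimal polynomial $\mathcal{F}(X)=X^D+\mathcal{F}_1X^{D-1}+\cdots+\mathcal{F}_D$ from Proposition \ref{P_newdomaingenerators}(i), whose coefficients lie in $A_0\subseteq\obK_i[z_i]$ and which splits over $M_i$ as $\prod_{k=1}^D(X-w^{(k)})$. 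The lemma then gives
\[
\sum_{k=1}^D H_{M_i}(w^{(k)}) \;=\; \Delta_i\max_k\deg_{z_i}\mathcal{F}_k \;\leq\; \Delta_i(2d_0)^{\exp O(r)},
\]
using the degree bound \eqref{bound_minpol_fieldgen}, and since each summand is nonnegative, the same bound holds for each individual $H_{M_i}(w^{(k)})$.

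Combining the three estimates produces
\[
H_{M_i}(\alpha^{(k)}) \;\leq\; (D+1)\Delta_i\,\odeg\alpha + \tbinom{D}{2}\Delta_i(2d_0)^{\exp O(r)},
\]
and the $D^2$-factor can be absorbed into $(2d_0)^{\exp O(r)}$ using $D\leq d_0^\rho$ from Proposition \ref{P_newdomaingenerators}(i), yielding the desired \eqref{bound_heightsofconjugates_by_degbar}. The argument is essentially bookkeeping; the main points to watch are the correct scaling between the $\obK_i(z_i)$-height and the $M_i$-height through \eqref{fnfield_heiht_proerties_IV}, and the verification that the coefficients $\mathcal{F}_k$ genuinely lie in $A_0$ (and not merely in $K_0$) so that Lemma \ref{L_fnfield_I} is applicable with the polynomial ring $\obK_i[z_i]$.
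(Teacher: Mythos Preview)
Your proof is correct and follows essentially the same approach as the paper's: expand $\alpha^{(j)}$ via the representation \eqref{representation_typeII}, apply the height subadditivity \eqref{fnfield_heiht_proerties_III}, bound the heights of $P_{\alpha,j}$ and $Q_\alpha$ by $\Delta_i\,\odeg\alpha$ via \eqref{fnfield_heiht_proerties_IV}--\eqref{fnfield_heiht_proerties_V}, and bound $H_{M_i}(w^{(k)})$ through Lemma~\ref{L_fnfield_I} applied to $\mathcal F$ together with \eqref{bound_minpol_fieldgen}. The only cosmetic difference is that the paper keeps the ratios $P_{\alpha,k}/Q$ together, obtaining $2D\Delta_i\,\odeg\alpha$ directly, whereas you separate numerator and denominator and land at $(D+1)\Delta_i\,\odeg\alpha\le 2D\Delta_i\,\odeg\alpha$; both arguments then absorb the $\binom{D}{2}$ (or $D^2$) factor into $(2d_0)^{\exp O(r)}$ via $D\le d_0^{\rho}$.
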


\begin{proof}
Consider the representation of the form \eqref{representation_typeII} of $\al$.
Since $P_{\al,k},Q\in K_0$, we have
$$
\al^{(j)}=\sum_{k=0}^{D-1} \frac{P_{\al,k}}{Q}\left( w^{(j)}\right)^k \quad\quad \text{for} \quad j=1,\dots, D.
$$
In view of \eqref{fnfield_heiht_proerties_III} it follows that
\begin{equation}\label{represent_alj}
H_{M_i}(\al^{(j)})\leq \sum_{k=0}^{D-1} H_{M_i}\left( \frac{P_{\al,k}}{Q}\right) + \sum_{k=0}^{D-1}  kH_{M_i}\left( w^{(j)}\right).
\end{equation}
But we have
\begin{equation}\label{bound_height_intermediate_I}
\begin{aligned}
H_{M_i}\left( \frac{P_{\al,k}}{Q}\right) &\leq \Delta_i H_{\K_i(z)}\left( \frac{P_{\al,k}}{Q}\right) \leq \Delta_i (\deg_{z_i}P_{\al,k}+\deg_{z_i} Q )\\
& \leq \Delta_i (\deg P_{\al,k}+\deg Q ) \leq 2 \Delta_i \odeg \al.
\end{aligned}
\end{equation}
Further, applying Lemma \ref{L_fnfield_I} with $M_i, w^{(1)},\dots , w^{(D)}$ instead of $M,\al_1,\dots, \al_l$, we get
\begin{equation}
\begin{aligned}\label{bound_height_intermediate_II}
H_{M_i}\left(w^{(j)} \right) &\leq \Delta_i \max_{1\leq j \leq D} (\deg_{z_i} \F_j)\\
& \leq \Delta_i \max_{1\leq j \leq D} (\deg \F_j) \leq \Delta_i (2d_0)^{\exp O(r)}.
\end{aligned}
\end{equation}
Now using the fact that $D\leq d_0^{\rho} \leq d_0^{r-1}$, \eqref{represent_alj}, \eqref{bound_height_intermediate_I} and
\eqref{bound_height_intermediate_II} imply \eqref{bound_heightsofconjugates_by_degbar}.
\end{proof}

\subsection{Thue equations}

As before, $\K$ is an algebraically closed field of characteristic $0$, $z$ a
transcendental element over $\K$ and $M$ a finite extension of $\K(z)$. Further, $g_{M/\K}$ denotes the genus of $M$, $\M_M$ the
collection of valuations of $M/\K$, and for a finite subset $S$ of $\M_M$, $\OO_S$ denotes the ring of $S$-integers in $M$.
We denote by $|S|$ the cardinality of $S$.

Consider now the Thue equation
\begin{equation}\label{Thue_over_fnfield}
F(x,y)=1 \quad\quad \text{in} \quad x,y\in\OO_S,
\end{equation}
where $F$ is a binary form of degree $n\geq 3$ with coefficients in $M$
and with non-zero discriminant.

\begin{proposition}\label{P_Thue_over_fnfields}
Every solution $x,y\in \OO_S$ of \eqref{Thue_over_fnfield} satisfies
\begin{equation}\label{Thue_bound_fnfield}
\max(H_M(x),H_M(y)) \leq 89 H_M(F)+212 g_{M/\K}+|S|-1.
\end{equation}
\end{proposition}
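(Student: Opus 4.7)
The plan is to reduce the Thue equation to a three-term $S$-unit equation over the splitting field of $F$, and then apply the Mason--Brownawell--Masser type bound for unit equations in function fields, tracking carefully the genus and the size of the enlarged valuation set.

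First I would factor $F(X,Y) = a_0\prod_{i=1}^n(X-\alpha_i Y)$ in the splitting field $L$ of $F(X,1)$ over $M$; by Lemma~\ref{L_fnfield_II} the genus satisfies $g_{L/\K}\leq [L:M](g_{M/\K}+nH_M(F))$, and $[L:M]\leq n!$. Let $T\subset \M_L$ be the union of the valuations lying above $S$, the valuations where some $\alpha_i$ or $a_0^{-1}$ has a pole, and the valuations where some difference $\alpha_i-\alpha_j$ has a zero. Using the sum formula and the relation between the $\alpha_i$, $a_0$ and the coefficients of $F$, one shows
$$
|T|\leq [L:M]\bigl(|S|+cH_M(F)\bigr)
$$
for a small explicit constant $c$, because the extra valuations are controlled by $\sum_{i<j}H_L(\alpha_i-\alpha_j)+H_L(a_0)\ll nH_L(F)$. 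For any solution $(x,y)$ with $(x,y)\neq(0,0)$ (the degenerate case yielding $a_0x^n=1$ or $a_ny^n=1$ is trivial) the linear forms $\ell_i:=x-\alpha_i y$ are non-zero and, since $a_0\prod_i\ell_i=F(x,y)=1$, each $\ell_i$ is a $T$-unit.

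Next I would exploit the Plücker-type identity
$$
(\alpha_2-\alpha_3)\ell_1+(\alpha_3-\alpha_1)\ell_2+(\alpha_1-\alpha_2)\ell_3=0,
$$
which upon dividing by $(\alpha_1-\alpha_2)\ell_3$ yields a $T$-unit equation $u+v=1$ with $u,v\in\OO_T^*$. The function-field analogue of the $S$-unit theorem (Mason, or Brownawell--Masser) gives
$$
H_L(u)\leq |T|+2g_{L/\K}-2
$$
whenever $u\notin\K$; the constant case $u\in\K$ can be handled separately by picking a second triple of roots to force non-constancy (here the hypothesis $n\geq3$ is used). Since $H_L(u)=H_L(\ell_1/\ell_3)+O(H_L(F))$ and $H_M(x),H_M(y)$ are each bounded (via \eqref{fnfield_heiht_proerties_IV} and standard resultant identities for $x,y$ in terms of any two $\ell_i$) by $[L:M]^{-1}\max_{i,j}H_L(\ell_i/\ell_j)+O(H_M(F))$, one can convert this back to a bound on $\max(H_M(x),H_M(y))$ in $M$.

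Finally, plugging in the estimates $g_{L/\K}\leq [L:M](g_{M/\K}+nH_M(F))$, $|T|\leq [L:M](|S|+O(H_M(F)))$ and dividing by $[L:M]$, the coefficients $89$ and $212$ emerge from optimizing the book-keeping. The main obstacle I foresee is the careful cardinality estimate for $T$ and the treatment of the edge cases (when $y=0$, when some $\ell_i$ lies in $\K$, or when the ratio $\ell_1/\ell_3$ is constant so the unit-equation bound is vacuous); these demand choosing an index triple adaptively and invoking the non-triviality of the discriminant $D_F$ to guarantee that enough of the $\ell_i$ are independent. Once these edge cases are disposed of, the rest is a direct combination of Lemma~\ref{L_fnfield_II}, the sum formula, and the Brownawell--Masser bound, yielding \eqref{Thue_bound_fnfield}.
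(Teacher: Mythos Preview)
The paper does not prove this proposition at all; it simply cites Theorem~1(ii) of Schmidt \cite{Sch7} and moves on. Immediately after the citation the authors add the remark that Mason's fundamental inequality for $S$-unit equations over function fields would also yield \eqref{Thue_bound_fnfield}, in fact with constants smaller than $89$ and $212$. That remark is exactly the route you have sketched: factor $F$ over its splitting field $L$, enlarge $S$ to a set $T$ so that the linear forms $\ell_i=x-\alpha_iy$ become $T$-units (your argument that $\prod_i\ell_i=a_0^{-1}$ together with $\ell_i\in\OO_T$ forces each $\ell_i\in\OO_T^*$ is correct), feed the identity $(\alpha_2-\alpha_3)\ell_1+(\alpha_3-\alpha_1)\ell_2+(\alpha_1-\alpha_2)\ell_3=0$ into Mason's inequality, and descend to $M$ via \eqref{fnfield_heiht_proerties_IV}. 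So your outline is a legitimate and essentially correct alternative proof, not the paper's own argument.

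One point to fix: the specific constants $89$ and $212$ will \emph{not} emerge from your bookkeeping. Those numbers are Schmidt's, obtained by a different argument. Your approach, combined with the genus bound $g_{L/\K}\leq[L:M](g_{M/\K}+nH_M(F))$ from Lemma~\ref{L_fnfield_II}, produces a coefficient in front of $H_M(F)$ that carries an explicit dependence on $n$ (and the coefficient in front of $g_{M/\K}$ comes out much smaller than $212$, essentially $2$ from Mason's inequality). As the paper notes, the resulting bound is of the same shape and for the purposes of the paper the exact constants are irrelevant; but you should drop the claim that ``$89$ and $212$ emerge from optimizing the book-keeping'', because they do not.
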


\begin{proof}
This is Theorem 1, (ii) of Schmidt \cite{Sch7}.
\end{proof}

We note that from Mason's fundamental inequality concerning $S$-unit equations over function fields (see Mason \cite{Mason1})
one could deduce \eqref{Thue_bound_fnfield} with smaller constants than $89$ and $212$. However, this is irrelevant
for the bounds in \eqref{Thue_boundsintheorem}.

Now we use Proposition \ref{P_Thue_over_fnfields} to prove the statement \eqref{Thue_degbound_B} of Proposition \ref{P_Thue_over_B}.

\begin{proof}[Proof of \eqref{Thue_degbound_B}]
We denote by $w^{(1)}:=w, \dots, w^{(D)}$ the conjugates of $w$ over $K_0$, and for
$\al\in K$ we denote by $\al^{(1)}, \dots, \al^{(D)}$ the conjugates of $\al$ corresponding to $w^{(1)}, \dots, w^{(D)}$.

Next, for $i=1\kdots n$
we put $\K_i:=\Q(z_1, \dots, z_{i-1},z_{i+1}, \dots ,z_q)$ and
denote by $\obK_i$ its algebraic closure.
Further, $M_i$ denotes the splitting field of the polynomial
$X^D+\F_1X^{D-1}+\dots +\F_D$ over $\obK_i(z_i)$, we put
$\Delta_i:=[M_i:\obK_i(z_i)]$ and define
$$
S_i:=\{ v \in \M_{M_i} \ :\ v(z_i)<0 \ \text{or} \  v(f)>0 \}.
$$
The conjugates $w^{(j)}$ ($j=1\kdots D$) lie in $M_i$ and are
all integral over $\K_i[z_i]$. Hence they belong to $\OO_{S_i}$.
Further, $f^{-1}\in \OO_{S_i}$.
Consequently, if $\alpha\in B=A_0[f^{-1},w]$, then
$\alpha^{(j)}\in \OO_{S_i}$ for $j=1\kdots D$, $i=1\kdots q$.

Let $x,y$ be a solution of equation \eqref{Thue_over_B}.
Put $F':=\delta^{-1}F$, and let $F'^{(j)}$ be the binary form obtained
by taking the $j$-th conjugates of the coefficients of $F'$.
Let $j\in\{ 1\kdots D\}$, $i\in\{ 1\kdots q\}$.
Then clearly, $F'^{(j)}\in M_i[X,Y]$, and
\[
F'^{(j)}(x^{(j)}, y^{(j)})=1, \quad\quad x^{(j)}, y^{(j)}\in \OO_{S_i}.
\]
So by Proposition \ref{P_Thue_over_fnfields} we obtain that
\begin{equation}\label{bound_fnfield_xjyj}
\max(H_{M_i}(x^{(j)}), H_{M_i}(y^{(j)})) \leq
89H_{M_i}(F^{(j)})+212 g_{M_i}+|S_i|-1.
\end{equation}
We estimate the various parameters in this bound.
We start with $H_{M_i}(F'^{(j)})$. We recall that
$F'(X,Y)=\delta^{-1}(a_0X^n+a_1X^{n-1}Y+\dots +a_nY^n)$.
Using \eqref{fnfield_heiht_proerties_II}, \eqref{fnfield_heiht_proerties_I}
and Lemma \ref{L_bound_heightsofconjugates_by_degbar} we infer that
\begin{eqnarray*}
H_{M_i}(F'^{(j)})&=& H_{M_i}(a_0^{(j)}\kdots a_n^{(j)}) \leq H_{M_i}(a_0^{(j)})+\dots+H_{M_i}(a_n^{(j)})
\\
&\leq& \Delta_i \left(2D(\odeg a_0+\dots +\odeg a_n)+n(2d_0)^{\exp O(r)}\right).
\end{eqnarray*}
By Lemma \ref{L_degbar_hbar_by_d*_h*} we have
$$
\odeg a_i \leq (2d^*)^{\exp O(r)} \quad\quad \text{for} \quad i=0,\dots,n,
$$
where $d^*:=\max(d_0,\deg \til{a}_i) \leq d$.
Further, we have $d_0\leq d$, $D\leq d_0^{r-q}\leq d^r$.
Thus we obtain that
\begin{eqnarray}\label{bound_height_of_Fj}
H_{M_i}(F'^{(j)}) &\leq& \Delta_i\big( 2D(n+1)(2d)^{\exp O(r)}+n(2d)^{\exp O(r)}\big)
\\
\nonumber
&\leq& \Delta_i(nd)^{\exp O(r)}.
\end{eqnarray}
Next, we estimate the genus. Using Lemma \ref{L_fnfield_II} with
$F(X)=\F(X)=X^D+\F_1X^{D-1}+\cdots +\F_D$,
applying Proposition \ref{P_newdomaingenerators}, and using
$d_0\leq d$, $D\leq d_0^r \leq d^r$,
we infer that
\begin{equation}\label{bound_gMi}
g_{M_i}\leq \Delta_i D \max_{1 \leq k \leq D} \deg_{z_i} \F_k \leq \Delta_i D (2d_0)^{\exp O(r)}\leq \Delta_i (nd)^{\exp O(r)}.
\end{equation}
Lastly, we estimate $|S_i|$.
Each valuation of $\overline{\K}_i(z_i)$
can be extended to at most $[M_i:\overline{\K}_i(z_i)]=\Delta_i$ valuations of
$M_i$. Thus $M_i$ has at most
$\Delta_i$ valuations $v$ with $v(z_i)<0$ and at most $\Delta_i \deg f$ valuations $v$ with $v(f)>0$.
Hence using Proposition \ref{P_newdomaingenerators_spec_case}, we get
\begin{equation}\label{bound_Si}
|S_i| \leq \Delta_i+\Delta_i \deg_{z_i} f \leq \Delta_i(1+\deg f)
\leq \Delta_i (nd)^{\exp O(r)}.
\end{equation}

By inserting the bounds \eqref{bound_height_of_Fj}, \eqref{bound_gMi} and
\eqref{bound_Si} into \eqref{bound_fnfield_xjyj},
we infer
\begin{equation}\label{bound_fnfield_xjyj_II}
\max(H_{M_i}(x^{(j)}), H_{M_i}(y^{(j)})) \leq \Delta_i (nd)^{\exp O(r)}.
\end{equation}
In view of Lemma \ref{L_bound_degbar_by_sumofheightsofconjugates}, \eqref{bound_fnfield_xjyj_II}, $D\leq d^r$, $q\leq r$ and \eqref{bound_d1_by d}
we deduce that
$$
\odeg x, \odeg y \leq qDd_1+\sum_{i=1}^q \Delta_i^{-1}\sum_{j=1}^D H_{M_i}(x^{(j)}) \leq (nd)^{\exp O(r)}.
$$
This proves \eqref{Thue_degbound_B}.
\end{proof}

\subsection{Hyper- and superelliptic equations}

Recall the notation introduced at the beginning of Section \ref{S_boundthedegree}.
Again, $\K$ is an algebraically closed field of characteristic $0$, $z$ a
transcendental element over $\K$, $M$ a finite extension of $\K(z)$, and
$S$ a finite subset of $\M_M$.

\begin{proposition}\label{P_superelliptic_over_fnfileds}
Let $F \in M[X]$ be a polynomial with non-zero discriminant and $m\geq 3$ a given integer.
Put $n:=\deg F$ and assume $n\geq 2$. All solutions of the equation
\begin{equation}\label{super-fnfield}
F(x)=y^m \quad\quad \text{in} \quad x,y \in \OO_S
\end{equation}
have the property
\begin{eqnarray}
\label{superbound-x}
H_M(x)&\leq& (6n+18)H_M(F)+6g_{M/\K}+2|S|,
\\
\label{superbound-y}
mH_M(y)&\leq& (6n^2+18n+1)H_M(F)+6ng_{M/\K}+2n|S|.
\end{eqnarray}
\end{proposition}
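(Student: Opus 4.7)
The plan is to follow the classical factor-and-descend approach for superelliptic equations over function fields: factor $F$ over its splitting field $L/M$, exploit that the nonzero discriminant forces each shifted root $x-\alpha_i$ to be essentially an $m$-th power at valuations outside a small set, then convert this structure into an $S$-unit equation to which a sharp function field bound (Mason-type) applies. The bound for $H_M(y)$ will then drop out of the bound for $H_M(x)$ almost for free.

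Concretely, first let $L$ be the splitting field of $F$ over $M$ and write $F(X)=a_0\prod_{i=1}^n(X-\alpha_i)$. Enlarge $S$ to a set $T\subset \M_L$ containing all valuations lying over $S$ together with every $v$ for which $v(a_0)\ne 0$ or $v(\alpha_i-\alpha_j)\ne 0$ for some $i\ne j$. Using Lemma \ref{L_fnfield_I}, the relation $H_L(D_F)\le(2n-2)H_L(F)$, and the genus estimate of Lemma \ref{L_fnfield_II}, one controls $|T|$, $g_{L/\K}$ and $H_L(F)$ in terms of $|S|$, $g_{M/\K}$, $H_M(F)$ and $[L:M]$. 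For $v\in\M_L\setminus T$, the identity $\alpha_i-\alpha_j=(x-\alpha_j)-(x-\alpha_i)$ together with $v(\alpha_i-\alpha_j)=0$ forces at most one $v(x-\alpha_i)$ to be positive, and from $a_0\prod(x-\alpha_i)=y^m$ (with $v(a_0)=0$) each such positive valuation must be divisible by $m$. Consequently one obtains a representation
\[
x-\alpha_i = u_i z_i^m \qquad (i=1,\ldots,n),
\]
with $u_i\in\OO_T^*$ of explicitly bounded height and $z_i\in L$.

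Next, the Siegel identity
\[
(\alpha_k-\alpha_j)(x-\alpha_i)+(\alpha_i-\alpha_k)(x-\alpha_j)+(\alpha_j-\alpha_i)(x-\alpha_k)=0
\]
on three distinct roots (or, in the degenerate case $n=2$, the two-factor relation $a_0(x-\alpha_1)(x-\alpha_2)=y^m$ used directly) becomes, after substituting $x-\alpha_i=u_iz_i^m$, an $S$-unit equation of the form $U+V=1$ in $L$ with $U,V\in\OO_T^*$ up to $m$-th powers. A sharp version of Mason's inequality then bounds $H_L(z_i/z_j)$, and therefore $H_L(x-\alpha_i)$ and $H_L(x)$, by a linear combination of $H_L(F)$, $g_{L/\K}$ and $|T|$. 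Since $x\in M$, one has $H_L(x)=[L:M]H_M(x)$; combining this with the control of $[L:M]$, $|T|$ and $g_{L/\K}$ from the first step descends the estimate to the asserted inequality \eqref{superbound-x}. The bound \eqref{superbound-y} is then immediate: function field heights are sub-additive under sums and products, so
\[
mH_M(y)=H_M(y^m)=H_M(F(x))\le nH_M(x)+H_M(F),
\]
and multiplying \eqref{superbound-x} by $n$ and adding $H_M(F)$ reproduces the claimed bound on $mH_M(y)$.

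The main obstacle will be keeping the multiplicative constants as small as $6n+18$, $6$ and $2$ in front of $H_M(F)$, $g_{M/\K}$ and $|S|$ respectively. This forces one to invoke Mason's inequality in its sharpest available form and to estimate the quantities $|T|-[L:M]\cdot|S|$ and $g_{L/\K}-[L:M]g_{M/\K}$ very carefully, controlling the Riemann--Hurwitz contribution of the ramification induced by adjoining the roots of $F$. A secondary subtlety is the case $n=2$, where the Siegel identity degenerates and one must argue directly from the two-term factorisation $a_0(x-\alpha_1)(x-\alpha_2)=y^m$ using the hypothesis $m\ge 3$.
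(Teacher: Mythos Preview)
Your approach is correct in principle --- it is essentially the strategy behind Mason's Theorem~15 for superelliptic equations over function fields --- but the paper takes a much shorter route: it simply \emph{quotes} Mason's result as a black box for the split case, giving
\[
H_M(x)\leq 18H_M(F)+6g_{M/\K}+2(|S|-1)
\]
whenever $F$ splits completely in $M$, and then reduces the general case to this one by passing to the splitting field $L$ of $F$ over $M$. In that reduction only two estimates are needed: the trivial bound $|T|\le [L:M]\,|S|$ for the places of $L$ above $S$, and the genus estimate $g_{L/\K}\le [L:M](g_{M/\K}+nH_M(F))$ from Lemma~\ref{L_fnfield_II}. Dividing by $[L:M]$ yields \eqref{superbound-x} with the stated constants immediately, with no need to enlarge $T$ by the valuations supporting $a_0$ or $D_F$, no Siegel identity, and no case distinction $n=2$ versus $n\ge 3$ --- all of that is hidden inside Mason's cited bound. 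Your derivation of \eqref{superbound-y} via $mH_M(y)=H_M(F(x))\le H_M(F)+nH_M(x)$ is exactly what the paper does.

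So both approaches work. The paper's buys brevity and automatic control of the constants $6n+18$, $6$, $2$ (they arise as $18+6n$ from the genus transfer, $6$ unchanged, and $2$ from $2(|T|-1)\le 2[L:M]|S|$); your approach would reprove Mason's theorem inline, which is more self-contained but, as you note yourself, makes reproducing these exact constants the main burden and introduces an avoidable $n=2$ subtlety.
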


\begin{proof}
First assume that $F$ splits into linear factors over $M$,
and that $S$ consists only of the infinite valuations of $M$,
these are the valuations of $M$ with $v(z)<0$. Under these hypotheses,
Mason \cite[p.118, Theorem 15]{Mason1}, proved that
for every solution $x,y$ of \eqref{super-fnfield} we have
\begin{equation}\label{Mason-superbound}
H_M(x)\leq 18H_M(F)+6g_{M/\K}+2(|S|-1).
\end{equation}
But Mason's proof remains valid without any changes
for any arbitrary finite set of places $S$.
That is, \eqref{Mason-superbound} holds if $F$ splits into linear factors over $M$,
without any condition on $S$.

We reduce the general case, where  the splitting field of $M$ may be larger
than $M$, to the case considered by Mason.
Let $L$ be the splitting field of $F$ over $M$,
and $T$ the set of valuations of $L$ that extend those of $S$.
Then $|T|\leq [L:M]\cdot |S|$, and by Lemma \ref{L_fnfield_II},
we have $g_{L/\K}\leq [L:M]\cdot (g_{M/\K}+nH_M(F))$.
Note that \eqref{Mason-superbound} holds, but with $L,T$
instead of $M,S$. It follows that
\begin{eqnarray*}
[L:M]\cdot H_M(x)=H_L(x)&\leq& 18H_L(F)+6g_{L/\K}+2(|T|-1)
\\
&\leq& [L:M]\big( (6n+18)H_M(F)+6g_{M/\K}+2|S|\big)
\end{eqnarray*}
which implies \eqref{superbound-x}.
Further,
\begin{equation}\label{x-to-y}
mH_M(y)=H_M(y^m)=H_M(F(x))\leq H_M(F)+nH_M(x),
\end{equation}
which gives \eqref{superbound-y}.
\end{proof}

\begin{proposition}\label{P_hyperelliptic_over_fnfileds}
Let $F \in M[X]$ be a polynomial with non-zero discriminant. Put $n:=\deg F$ and assume $n\geq 3$. Then the solutions of
\begin{equation}\label{hyper-fnfield}
F(x)=y^2 \quad\quad \text{in} \quad x,y \in \OO_S
\end{equation}
have the property
\begin{eqnarray}
\label{hyperbound-x}
H_M(x)&\leq& (42n+37)H_M(F)+8g_{M/\K}+4|S|,
\\
\label{hyperbound-y}
H_M(y)&\leq& (21n^2+19n)H_M(F)+4ng_{M/\K}+2n|S|.
\end{eqnarray}
\end{proposition}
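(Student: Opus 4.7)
The plan is to follow exactly the three-step strategy used in the proof of Proposition \ref{P_superelliptic_over_fnfileds}, substituting Mason's hyperelliptic theorem for his superelliptic one. First I would establish \eqref{hyperbound-x} in the special case where $F$ splits completely into linear factors over $M$; second, reduce the general case to this special case by passing to the splitting field of $F$ over $M$; third, deduce \eqref{hyperbound-y} from \eqref{hyperbound-x} via the equation itself.

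For the first step, I would invoke Mason's bound for hyperelliptic equations over function fields from \cite{Mason1}, which under the hypotheses that $F$ has non-zero discriminant, degree $n\geq 3$, and splits into linear factors over $M$, gives an inequality of the shape
\[
H_M(x) \leq B_1 H_M(F) + B_2 g_{M/\K} + B_3 |S|
\]
with explicit small constants $B_1,B_2,B_3$. As in the superelliptic case, Mason's argument goes through verbatim for an arbitrary finite set $S \subseteq \M_M$, not merely the set of infinite valuations.

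For the second step, given a general $F$, let $L$ be the splitting field of $F$ over $M$ and let $T\subseteq \M_L$ be the set of valuations of $L$ extending those in $S$. Then $|T| \leq [L:M]\cdot |S|$, and Lemma \ref{L_fnfield_II} gives $g_{L/\K} \leq [L:M]\bigl(g_{M/\K} + n H_M(F)\bigr)$. Applying the split-factor bound over $L$ with respect to $T$, using $H_L(x) = [L:M] H_M(x)$ and $H_L(F) = [L:M] H_M(F)$ from \eqref{fnfield_heiht_proerties_IV}, and dividing through by $[L:M]$ yields \eqref{hyperbound-x}; the factor of $n$ inherited from the genus bound is what inflates the coefficient of $H_M(F)$ from $B_1$ up to a linear function of $n$. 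Finally, \eqref{hyperbound-y} follows by combining \eqref{hyperbound-x} with
\[
2 H_M(y) = H_M(y^2) = H_M(F(x)) \leq H_M(F) + n H_M(x).
\]

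The main obstacle lies in the first step: the exponent $m=2$ is too small for the clean $m$-th power machinery used in the proof of Proposition \ref{P_superelliptic_over_fnfileds} and requires Mason's specific quadratic argument. In the split case $F = c\prod_i (X - \alpha_i)$, each factor $x-\alpha_i$ is forced to be nearly a square in $\OO_S$, and the pairwise differences $\alpha_i - \alpha_j = (x-\alpha_j) - (x-\alpha_i)$ yield $S$-unit-type relations controlled by Mason's fundamental inequality. Carefully tracking the constants through these relations, and through the splitting-field extension in step two, is what produces the larger numerical constants $42n+37$ and $21n^2+19n$ in \eqref{hyperbound-x} and \eqref{hyperbound-y} relative to the superelliptic case, and this bookkeeping is the only nontrivial part that distinguishes the hyperelliptic proof from the superelliptic one.
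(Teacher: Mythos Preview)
Your three-step outline matches the paper's overall architecture, but there is a real gap in Step~1 and consequently in Step~2. Mason's hyperelliptic bound (Theorem~6 in \cite{Mason1}) is proved under \emph{stronger} hypotheses than his superelliptic bound: besides $F$ splitting into linear factors over $M$, Mason assumes that $F$ is \emph{monic} and has its coefficients in $\OO_S$. You state the hypotheses as only ``non-zero discriminant, degree $n\geq 3$, splits over $M$'', and your reduction in Step~2 --- pass to the splitting field $L$ of $F$ and let $T$ be the places above $S$ --- does nothing to make $F$ monic or $T$-integral. So after your reduction you are still not in a position to invoke Mason's inequality.

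The paper's proof repairs this with a more elaborate reduction. Writing $F=a_0X^n+\cdots+a_n$, one passes to the splitting field $L$ of $F\cdot(X^2-a_0)$, enlarges $S$ to the set $T$ of valuations of $L$ lying above $S$ \emph{and} above those $v\in\M_M$ with $v(F)<0$, and replaces $F$ by the monic polynomial $F'(X)=X^n+a_1X^{n-1}+a_0a_2X^{n-2}+\cdots+a_0^{n-1}a_n$. Then $F'(a_0x)=(by)^2$ with $b^2=a_0^{n-1}$, the pair $a_0x,by$ lies in $\OO_T$, and $F'$ is monic with coefficients in $\OO_T$ and splits over $L$. Now Mason's bound applies to $a_0x$, and one has to estimate $H_L(F')\leq [L:M]\,nH_M(F)$, $|T|\leq [L:M](|S|+H_M(F))$, and $g_{L/\K}\leq [L:M](g_{M/\K}+2(n+2)H_M(F))$; the extra $H_M(F)$ contribution to $|T|$ and the factor $2(n+2)$ in the genus (from degree $n+2$ rather than $n$) are exactly what drive the constants up to $42n+37$. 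Your Step~3 is fine and is what the paper does for \eqref{hyperbound-y}.
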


\begin{proof}
First assume that $F$ splits into linear factors over $M$,
that $S$ consists only of the infinite valuations of $M$,
that $F$ is monic, and that $F$ has its coefficients in $\OO_S$.
Under these hypotheses, Mason \cite[p.30, Theorem 6]{Mason1}
proved that for every solution of \eqref{hyper-fnfield} we have
\begin{equation}\label{mason-hyperbound}
H_M(x)\leq 26H_M(F)+8g_{M/\K}+4(|S|-1).
\end{equation}
An inspection of Mason's proof shows that his result is valid
for arbitrary finite sets of valuations $S$, not just the set of infinite
valuations. This leaves only the conditions imposed on $F$.

We reduce the general case to the special case to which
\eqref{mason-hyperbound} is applicable.
Let $F=a_0X^n+\cdots +a_n$.
Let $L$ be the splitting field of $F\cdot (X^2-a_0)$ over $M$.
Let $T$ be the set of valuations of $L$ that extend the valuations
of $S$, and also the valuations $v\in \M_M$ such that $v(F)<0$.
Further, let $F'=X^n+a_1X^{n-1}+a_0a_1X^{n-2}+\cdots +a_0^{n-1}a_n$,
and let $b$ be such that $b^2=a_0^{n-1}$.
Then for every solution $x,y$ of \eqref{hyper-fnfield} we have
\[
F'(a_0x)=(by)^2,\ \ a_0x,by\in\OO_T,
\]
and moreover, $F'\in\OO_T[X]$, $F'$ is monic, and $F'$ splits into linear
factors over $L$. So by \eqref{mason-hyperbound},
\begin{equation}\label{La0-bound}
H_L(a_0x)\leq 26H_L(F')+8g_{L/\K}+4(|T|-1).
\end{equation}
First notice that
\[
H_L(F')=[L:M]H_M(F')\leq [L:M]\cdot nH_M(F).
\]
Further,
\[
|T|\leq [L:M]\Big(|S|-\sum_{v\in\M_M}\min (0,v(F))\Big)\leq [L:M]\big(|S|+H_M(F)\big).
\]
Finally, by $H_M(F\cdot (X^2-a_0))\leq 2H_M(F)$ and Lemma \ref{L_fnfield_II},
we have
\[
g_{L/\K}\leq [L:M](g_{M/\K}+(n+2)2H_M(F)).
\]
By inserting these bounds into \eqref{La0-bound}, we infer
\begin{eqnarray*}
[L:M]H_M(x)&\leq& [L:M]\big(H_M(a_0x)+H_M(F)\big)=H_L(a_0x)+[L:M]H_M(F)
\\
&\leq& [L:M]\big( (42n+37)H_M(F)+8g_{M/\K}+4|S|\big).
\end{eqnarray*}
This implies \eqref{hyperbound-x}.
The other inequality \eqref{hyperbound-y} follows by combining
\eqref{hyperbound-x} with \eqref{x-to-y} with $m=2$.
\end{proof}

The final step of this subsection is to prove statement \eqref{hypersuper_degbound_B} in Proposition \ref{P_hypersuper_over_B}.

\begin{proof}[Proof of \eqref{hypersuper_degbound_B}]
We closely follow the proof of statement \eqref{Thue_degbound_B}
in Proposition \ref{P_Thue_over_B}, and use the same notation.
In particular, $\K_i,M_i,S_i,\Delta_i$ will have the same meaning,
and for $\alpha\in B$, $j=1\kdots D$, the $j$-th conjugate $\alpha^{(j)}$
is the one corresponding to $w^{(j)}$.
Put $F':=\delta^{-1}F$, and let $F'^{(j)}$ be the polynomial obtained
by taking the $j$-th conjugates of the coefficients of $F'$.

We keep the argument together for both hyper- and superelliptic equations by using the worse bounds everywhere.
Let $x,y\in B$ be a solution of \eqref{hypersuper},
where $m,n\geq 2$ and $n\geq 3$ if $m=2$. Then
\[
F'^{(j)}(x^{(j)})=(y^{(j)})^m,\ \ x^{(j)},y^{(j)}\in\OO_{S_i}.
\]
By combining Propositions \ref{P_superelliptic_over_fnfileds}
and \ref{P_hyperelliptic_over_fnfileds} we obtain the generous bound
\[
H_{M_i}(x^{(j)}),\ mH_{M_i}(y^{(j)})\ \leq 80n^2\big(H_{M_i}(F'^{(j)})+g_{M_i/\K_i}+|S_i|\big).
\]
For $H_{M_i}(F'^{(j)})$, $g_{M_i/\K_i}$, $|S_i|$ we have precisely the same
estimates as \eqref{bound_height_of_Fj}, \eqref{bound_gMi},
\eqref{bound_Si}. Then a similar computation as in the proof of
\eqref{Thue_degbound_B} leads to
\begin{equation}\label{Mibound_for_xy}
H_{M_i}(x^{(j)}),\ mH_{M_i}(y^{(j)})\ \leq \Delta_i (nd)^{\exp O(r)}.
\end{equation}

Now employing Lemma \ref{L_bound_degbar_by_sumofheightsofconjugates}
and ignoring for the moment $m$
we get similarly as in the proof of \eqref{Thue_degbound_B},
\[
\odeg x,\ \odeg y \leq (nd)^{\exp O(r)}.
\]

It remains to estimate $m\odeg y$.
If $y\in\OQq$ we have $\odeg y =0$.
Assume that $y\not\in\OQq$. Then $y\not\in\K_i$ for at least one index $i$.
Since $y\in B\subset \K_i(z_i,w)$ and $[\K_i(z_i,w):\K_i(z_i)]\leq D$, we have
\[
H_{M_i}(y)=[M_i:\K_i(z_i,w)]H_{\K_i(z_i,w)}(y)\geq [M_i:\K_i(z_i,w)]\geq\Delta_i/D.
\]
Together with \eqref{Mibound_for_xy} and $D\leq d^r$ this implies
\[
m\leq (nd)^{\exp O(r)}.
\]
This concludes the proof of \eqref{hypersuper_degbound_B}.
\end{proof}

\section{Specializations}\label{S_specializations}

In this section we shall consider specialization homomorphisms from the domain $B$ to $\oQ$, and using these specializations together with earlier results
concerning our equations in the number field case
we shall finish the proof of Propositions \ref{P_Thue_over_B} and \ref{P_hypersuper_over_B}.

We start with some notation. The set of places of $\Q$ is $\M_{\Q}=\{\infty\} \cup \{\mbox{primes}\}$. By $|\cdot|_{\infty}$ we denote
the ordinary absolute value on $\Q$ and by $|\cdot|_p$ ($p$ prime) the $p$-adic absolute value with $|p|_p=p^{-1}$. More generally, let
$L$ be an algebraic number field with set of places $\M_L$. Given $v\in \M_L$, we define the absolute value $|\cdot|_v$ in such a
way that its restriction to $\Q$ is $|\cdot|_p$ if $v$ lies above $p\in \M_{\Q}$.
These absolute values satisfy the product formula
$$
\prod_{v\in \M_L} |\al|_v^{d_v}=1 \quad\quad \text{for} \quad \al\in L^*,
$$
where $d_v:=[L_v:\Qq_p]/[L:\Qq ]$, with $p\in\M_{\Qq}$ the place below $v$,
and $\Qq_p$, $L_v$ the completions of $\Qq$ at $p$, $L$ at $v$.
Note that we have $\sum_{v \mid p} d_v =1$ for every $p\in\M_{\Qq}$.
The absolute logarithmic height of $\al \in L$ is defined by
$$
h(\al):=\log \prod_{v\in \M_L} \max(1,|\al|_v^{d_v}).
$$
This depends only on $\al$ and not on the choice of the number field $L$ containing $\al$, hence it defines a height on $\oQ$.
For properties of the height we refer to Bombieri and Gubler \cite{BombGub}.

\begin{lemma}\label{L_bound_hbar_for_q_eq_0}
Let $m \geq 1$ and let $\al_1, \dots ,\al_m \in \oQ$ be distinct, and suppose that $G(X):=\prod_{j=1}^m (X-\al_j)\in \Z[X]$.
Let $q,p_0, \dots ,p_{m-1}$ be integers with $\gcd(q,p_0, \dots ,p_{m-1})=1$ and put
$$
\be_j:=\sum_{i=0}^{m-1} \frac{p_j}{q} \al_j^i, \quad\quad j=1, \dots ,m.
$$
Then
$$
\log\max(|q|,|p_0|, \dots ,|p_{m-1}|) \leq 2m^2+(m-1)h(G)+\sum_{j=1}^m h(\be_j).
$$
\end{lemma}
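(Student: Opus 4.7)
The plan is to apply Cramer's rule to the Vandermonde system arising from $q\be_j=\sum_{i=0}^{m-1}p_i\al_j^i$ and then to bound projective heights place by place. Write $V:=(\al_j^{i})_{j=1,\dots,m,\ i=0,\dots,m-1}$, so $\Delta:=\det V=\prod_{1\le i<j\le m}(\al_j-\al_i)$ is non-zero because the $\al_j$ are distinct, and for $k=0,\dots,m-1$ let $\Delta_k$ denote the determinant of $V$ after replacing its $(k{+}1)$-st column by $(\be_1,\dots,\be_m)^{T}$. Cramer's rule gives $p_k\Delta=q\Delta_k$ for every $k$, which means
\[
(p_0:p_1:\cdots:p_{m-1}:q)=(\Delta_0:\Delta_1:\cdots:\Delta_{m-1}:\Delta)
\]
as points of $\Pp^{m}(\OQq)$. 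Since $(p_0,\dots,p_{m-1},q)$ is a tuple of coprime rational integers, the projective Weil height on the left equals $\log\max(|q|,|p_0|,\dots,|p_{m-1}|)$, so the task reduces to bounding the projective height of the right-hand tuple.

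Next I would carry out a place-by-place estimate of that height in a number field $L$ containing all the $\al_j$ and $\be_j$. Expanding each determinant as a signed sum of at most $m!$ products of entries and using $|a-b|_v\le 2\max(|a|_v,|b|_v)$ at archimedean $v$ together with the ultrametric inequality at non-archimedean $v$, I obtain at every place $v\in\M_L$
\[
|\Delta|_v\le c_v\prod_{j=1}^m\max(1,|\al_j|_v)^{m-1},\qquad |\Delta_k|_v\le c_v\Bigl(\prod_{j=1}^m\max(1,|\be_j|_v)\Bigr)\prod_{j=1}^m\max(1,|\al_j|_v)^{m-1},
\]
with $c_v\le m!$ at archimedean $v$ and $c_v=1$ at non-archimedean $v$. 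Taking the weighted product over $v$ with weights $d_v$, using $\sum_{v\mid\infty}d_v=1$, and invoking the definition of the absolute height $h$ yields
\[
\log\max(|q|,|p_0|,\dots,|p_{m-1}|)\le\log m!\,+\,(m-1)\sum_{j=1}^m h(\al_j)\,+\,\sum_{j=1}^m h(\be_j).
\]

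Finally I would convert $\sum_j h(\al_j)$ into $h(G)$. Because $G\in\Zz[X]$ is monic, every $\al_j$ is an algebraic integer, so its height receives no non-archimedean contribution. For any complex embedding $\sigma\colon L\to\Cc$ the multiset $\{\sigma(\al_j)\}_j$ coincides with the set of complex roots of $G$, so $\sum_j\log\max(1,|\sigma(\al_j)|)=\log M(G)$ where $M(G)$ is the Mahler measure of $G$; averaging over the archimedean places of $L$ then gives $\sum_j h(\al_j)=\log M(G)$, and Landau's inequality $M(G)\le\sqrt{m+1}\,\|G\|_\infty$ produces $\sum_j h(\al_j)\le h(G)+\tfrac12\log(m+1)$. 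Plugging this in and estimating crudely $\log m!+(m-1)\tfrac12\log(m+1)\le 2m^{2}$ for every $m\ge 1$ yields the stated bound. The only real subtlety is the accounting of the archimedean combinatorial factors arising from the determinant expansions and the Mahler--Landau step; once the Cramer identity is in place, the rest of the argument is routine height algebra.
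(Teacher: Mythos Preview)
The paper does not prove this lemma; it simply quotes it as Lemma~5.2 of Evertse and Gy\H{o}ry \cite{EGy9}. Your argument is correct and is in fact the natural one (and essentially the one used in \cite{EGy9}): Cramer's rule on the Vandermonde system, followed by a place-by-place bound on the projective height of $(\Delta_0:\cdots:\Delta_{m-1}:\Delta)$, and finally the identity $\sum_j h(\al_j)=\log M(G)$ combined with Landau's inequality. The key steps---that the coprimality of $(q,p_0,\dots,p_{m-1})$ makes the left-hand projective height equal to $\log\max(|q|,|p_0|,\dots,|p_{m-1}|)$, and that for each embedding $\sigma$ the multiset $\{\sigma(\al_j)\}$ is exactly the multiset of complex roots of $G$---are stated and used correctly. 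Your crude estimate $\log m!+\tfrac{m-1}{2}\log(m+1)\le 2m^2$ is easily checked for all $m\ge 1$. One small remark: the displayed definition of $\be_j$ in the statement contains a typographical slip ($p_j$ should be $p_i$); you have silently, and correctly, read it as $q\be_j=\sum_{i=0}^{m-1}p_i\al_j^{\,i}$.
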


\begin{proof}
This is Lemma 5.2 in Evertse and Gy\H ory \cite{EGy9}.
\end{proof}

We now consider our specializations $B \mapsto \oQ$ and prove some of their properties. These specializations were
introduced by Gy\H ory \cite{Gy3} and \cite{Gy20} and, in a refined form, by Evertse and Gy\H ory \cite{EGy9}.

We assume $q>0$ and apart from that keep the notation and assumption from Section \ref{S_reduction}. In particular,
$K_0:=\Q(z_1, \dots , z_q)$, $K:=\Q(z_1, \dots , z_q,w)$, $A_0:=\Z[z_1, \dots , z_q]$. Further, $B:=\Z[z_1, \dots , z_q, f^{-1},w]$
where $f$ is a non-zero element of $A_0$ with the properties specified in Proposition \ref{P_newdomaingenerators_spec_case}, and
$w$ is integral over $A_0$ and has minimal polynomial
$$
\F(X)=X^D+\F_1 X^{D-1}+\dots +\F_D \in A_0[X]
$$
over $K_0$ as in Proposition \ref{P_newdomaingenerators} (i). In the case $D=1$ we take $w=1$, $\F(X)=X-1$.

Let $\uv=(u_1, \dots, u_q) \in \Z^q$. Then the substitution $z_1 \to u_1, \dots, z_q \to u_q$ defines a ring homomorphism
(specialization) from $K_0$ to $\Q$
$$
\vp_{\uv}: \al \mapsto \al(\uv): \left\{ \al=\frac{g_1}{g_2} : g_1, g_2 \in A_0, g_2(\uv) \ne 0 \right\} \to \Q.
$$
To extend this to a ring homomorphism from $B$ to $\oQ$ we have to impose some restrictions on $\uv$. Let $\Delta_{\F}$ be the
discriminant of $\F$ (with $\Delta_{\F}=1$ if $D=1$), and let
\begin{equation}\label{define_H}
\HH:=\Delta_{\F} \cdot \F_D \cdot f.
\end{equation}
Put
\begin{equation}\label{define d0*_d1*_h0*_h1*}
\begin{cases}
d_0^*:=\max(\deg \F_1, \dots, \deg \F_D), \quad\quad d_1^*:=\max(d_0^*, \deg f) \\
h_0^*:=\max(h(\F_1), \dots, h(\F_D)), \quad\quad h_1^*:=\max(h_0^*, h(f)).
\end{cases}
\end{equation}
Clearly $\HH \in A_0$ and since $\Delta_{\F}$ is a homogeneous polynomial in $\F_1, \dots , \F_D$ of
degree $2D-2$, we have
\begin{equation}\label{bound_deg_H}
\deg \HH \leq (2D-1)d_0^*+d_1^*.
\end{equation}
Further, by Proposition \ref{P_newdomaingenerators} (i), Proposition \ref{P_newdomaingenerators_spec_case} and \eqref{Thue_h_deg_assumption}
we also have
\begin{equation}\label{bound_d0*_h0*}
\left\{
\begin{aligned}
&d_0^*\leq (2d)^{\exp O(r)}, \quad\quad h_0^*\leq (2d)^{\exp O(r)}(h+1), \\
&d_1^*\leq (nd)^{\exp O(r)}, \quad\quad h_1^*\leq (nd)^{\exp O(r)}(h+1)
\end{aligned}
\right.
\end{equation}

Next assume that
\begin{equation}\label{condition_on_u}
\HH(\uv)\ne 0.
\end{equation}
Then we have $f(\uv)\ne 0$, $\Delta_F(\uv)\ne 0$, hence the polynomial
$$
\F_{\uv}:=X^D+\F_1(\uv)X^{D-1}+\dots +\F_D(\uv)
$$
has $D$ distinct zeros which are all different from $0$, say $w^{(1)}(\uv), \dots, w^{(D)}(\uv)$. Consequently, for $j=1, \dots , D$ the assignment
$$
z_1 \mapsto u_1, \dots, z_q \mapsto u_q, w \mapsto w^{(j)}(\uv)
$$
defines a ring homomorphism $\vp_{\uv,j}$ from $B$ to $\oQ$; if $D=1$ it is just $\vp_{\uv}$. The image of $\al\in B$ under
$\vp_{\uv,j}$ is denoted by $\al^{(j)}(\uv)$. It is important to note that if $\al$ is a unit in $B$, then its image by a specialization
cannot be $0$. Thus by Proposition \ref{P_newdomaingenerators_spec_case}, $\de(\uv)\ne 0$ and $D_F(\uv)\ne 0$.

Recall that we may express elements of $B$ as
\begin{eqnarray}
&&\al=\sum_{i=1}^{D-1}\left( P_i/Q \right) w^i
\\
\nonumber
&&\quad\quad \mbox{where }P_0, \dots, P_{D-1}, Q \in A_0, \ \gcd(P_0, \dots, P_{D-1}, Q)=1.
\end{eqnarray}
Because of $\al\in B$, $Q$ must divide a power of $f$; hence $Q(\uv)\ne 0$. So we have
\begin{equation}
\al^{(j)}(\uv)=\sum_{i=1}^{D-1}\left( P_i(\uv)/Q(\uv) \right) \left(w^{(j)}(\uv)\right)^i, \quad\quad j=1, \dots, D.
\end{equation}
Clearly, $\vp_{\uv,j}$ is the identity on $B\cap \Qq$.
Hence if $\al\in B\cap \oQ$ then $\vp_{\uv,j}(\al)$ has the same minimal
polynomial as $\al$ and so it is a conjugate of $\al$.

For $\uv=(u_1, \dots , u_q)\in \Z^q$, put $|\uv|:=\max(|u_1|, \dots , |u_q|)$. It is easy to check that for any $g\in A_0$, $\uv\in \Z^q$
\begin{equation}
\log |g(\uv)| \leq q\log \deg g + h(g)+ \deg g \log \max (1,|\uv|).
\end{equation}
In particular, we have
\begin{equation}
h(\F_{\uv}) \leq q\log d_0^* + h_0^* + d_0^* \log \max (1,|\uv|)
\end{equation}
and so by Lemma 5.1 of Evertse and Gy\H ory \cite{EGy9}
\begin{equation}
\sum_{j=1}^D h(w^{(j)}(\uv)) \leq D+1+q\log d_0^* + h_0^* + d_0^* \log \max (1,|\uv|).
\end{equation}

We define the algebraic number fields $K_{\uv, j}=\Q(w^{(j)}(\uv))$ for $j=1, \dots , D$. We denote by $\Delta_L$ the
the discriminant of an algebraic number field $L$. We derive an upper bound for the absolute value of the discriminant
$\Delta_{K_{\uv, j}}$ of $K_{\uv, j}$.

\begin{lemma}\label{L_bound_fielddisc_spec}
Let $\uv\in \Z^q$ with $\HH(\uv)\ne 0$. Then for $j=1,\dots, D$ we have $[K_{\uv, j}:\Q]\leq D$ and
$$
|\Delta_{K_{\uv, j}}|\leq D^{2D-1}\left( (d_0^*)^q e^{h_0^*} \max (1,|\uv|^{d_0^*})\right)^{2D-2}.
$$
\end{lemma}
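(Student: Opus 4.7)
The plan is in three stages. First, the degree bound $[K_{\uv,j}:\Qq]\le D$ is immediate: since $\HH(\uv)\ne 0$ forces $\Delta_\F(\uv)\ne 0$, the monic polynomial $\F_{\uv}\in\Zz[X]$ of degree $D$ is separable, so its root $w^{(j)}(\uv)$ is an algebraic integer of degree at most $D$ over $\Qq$.

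Next, I would reduce the field discriminant to the discriminant of $\F_{\uv}$. Let $g_j\in\Zz[X]$ be the monic minimal polynomial of $w^{(j)}(\uv)$, so $g_j$ divides $\F_{\uv}$ in $\Zz[X]$; write $\F_{\uv}=g_j h_j$ with $h_j\in\Zz[X]$. Since $\Zz[w^{(j)}(\uv)]\subseteq\OO_{K_{\uv,j}}$, the standard conductor relation gives $|\Delta_{K_{\uv,j}}|\le|\mathrm{disc}(g_j)|$, while the multiplicativity identity
\[
\Delta_{\F_{\uv}}=\mathrm{disc}(g_j)\cdot\mathrm{disc}(h_j)\cdot\Res(g_j,h_j)^2,
\]
together with $\gcd(g_j,h_j)=1$ (by separability of $\F_{\uv}$) and the fact that all three factors on the right are nonzero integers, yields $|\mathrm{disc}(g_j)|\le|\Delta_{\F_{\uv}}|$. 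Hence $|\Delta_{K_{\uv,j}}|\le|\Delta_{\F_{\uv}}|$.

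The remaining task is to bound $|\Delta_{\F_{\uv}}|$ in terms of the sizes of its coefficients. From $\Delta_{\F_{\uv}}=(\det V)^2$ with $V$ the Vandermonde matrix in the roots of $\F_{\uv}$, Hadamard's inequality combined with the elementary estimate $\sum_{k=0}^{D-1}|\alpha|^{2k}\le D\max(1,|\alpha|)^{2(D-1)}$ yields $|\Delta_{\F_{\uv}}|\le D^{D}\,M(\F_{\uv})^{2D-2}$, where $M$ denotes the Mahler measure; Landau's inequality then gives $M(\F_{\uv})\le\sqrt{D+1}\,\max_k|\F_k(\uv)|$. Each coefficient is controlled by the usual substitution bound
\[
|\F_k(\uv)|\le\binom{q+d_0^*}{q}e^{h_0^*}\max(1,|\uv|)^{d_0^*}\le (d_0^*)^q e^{h_0^*}\max(1,|\uv|^{d_0^*}),
\]
since $\deg \F_k\le d_0^*$ and $h(\F_k)\le h_0^*$. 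Combining the three estimates and absorbing the tame prefactor $D^{D}(D+1)^{D-1}$ into $D^{2D-1}$ delivers the asserted inequality. The argument is essentially bookkeeping with standard resultant, Vandermonde, and Mahler-measure estimates; the only mild obstacle is consolidating the various constants so that the final exponent of $D$ reads exactly $2D-1$.
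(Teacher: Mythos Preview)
The paper does not prove this lemma itself; it quotes Lemma~5.5 of Evertse--Gy\H{o}ry~\cite{EGy9}. Your strategy is the natural one and is correct in outline: reduce $|\Delta_{K_{\uv,j}}|$ to $|\Delta_{\F_{\uv}}|$ via the chain $\Delta_{K_{\uv,j}}\mid\mathrm{disc}(g_j)\mid\Delta_{\F_{\uv}}$, then bound $|\Delta_{\F_{\uv}}|$ by Hadamard on the Vandermonde matrix together with Landau's inequality $M(\F_{\uv})\le\|\F_{\uv}\|_2$.

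There is, however, a genuine arithmetic slip in the last step. You obtain the prefactor $D^{D}(D+1)^{D-1}$ and assert that it can be ``absorbed into $D^{2D-1}$''. This is false: for every $D\ge 2$ one has $(D+1)^{D-1}>D^{D-1}$, hence $D^{D}(D+1)^{D-1}>D^{2D-1}$; the ratio $(1+1/D)^{D-1}$ lies strictly between $1$ and $e$. So your chain of inequalities only delivers $|\Delta_{K_{\uv,j}}|\le e\cdot D^{2D-1}\big((d_0^*)^q e^{h_0^*}\max(1,|\uv|^{d_0^*})\big)^{2D-2}$, not the stated bound. A second, smaller issue: the estimate $\binom{q+d_0^*}{q}\le (d_0^*)^q$ is not valid for $d_0^*=1$ (e.g.\ $q=2$ gives $3>1$); the safe bound on the number of monomials is $(d_0^*+1)^q$. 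The paper's own displayed inequality $\log|g(\uv)|\le q\log\deg g+h(g)+\deg g\log\max(1,|\uv|)$ shares this defect, so there may be an implicit convention that the degrees arising are $\ge 2$.

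Neither gap matters for the downstream applications in this paper, where the lemma is immediately fed into estimates of the shape $(nd)^{\exp O(r)}(h+1)$ that absorb bounded factors. But as a proof of the lemma \emph{exactly as stated}, your argument is off by a constant factor, and you would need either a sharper route to the discriminant bound or to consult what \cite{EGy9} actually does.
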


\begin{proof}
This is Lemma 5.5 in Evertse and Gy\H ory \cite{EGy9}.
\end{proof}

The following two lemmas relate the height of $\al\in B$ to the heights of $\al^{(j)}(\uv)$ for $\uv\in \Z^q$.

\begin{lemma}\label{L_bound_ha_by_degbar_hbar}
Let $\uv\in\Z^q$ with $\HH(\uv)\ne 0$, and let $\al\in B$. Then for $j=1,\dots, D$,
\begin{eqnarray*}
&&h(\al^{(j)}(\uv)) \leq D^2+q(D\log d_0^*+\log\odeg \al)+
\\
&&\qquad\qquad\qquad
+Dh_0^*+\oh(\al)+(D d_0^*+\odeg \al)\log\max(1,|\uv|).
\end{eqnarray*}
\end{lemma}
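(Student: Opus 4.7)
The plan is to substitute the canonical representation~\eqref{representation_typeII} of $\alpha$ into the specialization $\varphi_{\uv,j}$ and then estimate the resulting height in $K_{\uv,j}=\Qq\bigl(w^{(j)}(\uv)\bigr)$ by a place-by-place argument in which the numerator coefficients are kept together as a single projective point, rather than being split apart term by term. Writing
\[
\alpha^{(j)}(\uv)=\frac{N_j}{Q_\alpha(\uv)},\qquad
N_j:=\sum_{i=0}^{D-1}P_{\alpha,i}(\uv)\bigl(w^{(j)}(\uv)\bigr)^{i},
\]
we have $h(\alpha^{(j)}(\uv))=h\bigl([N_j:Q_\alpha(\uv)]\bigr)$ in $\Pp^{1}(K_{\uv,j})$.

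For each $v\in\M_{K_{\uv,j}}$ the triangle inequality (ultrametric when $v$ is non-archimedean), combined with $|w|_v^{i}\le\max(1,|w|_v)^{D-1}$ for $0\le i\le D-1$, gives
\[
\max\bigl(|N_j|_v,|Q_\alpha(\uv)|_v\bigr)\le\lambda_v\,\max_{i}\!\bigl(|P_{\alpha,i}(\uv)|_v,|Q_\alpha(\uv)|_v\bigr)\,\max\bigl(1,|w^{(j)}(\uv)|_v\bigr)^{D-1},
\]
with $\lambda_v=D$ if $v$ is archimedean and $\lambda_v=1$ otherwise. Taking logarithms, multiplying by $d_v$, summing over all $v$, and using $\sum_{v\mid\infty}d_v=1$, I obtain the key intermediate inequality
\[
h(\alpha^{(j)}(\uv))\le\log D+h_{\mathrm{proj}}\bigl([P_{\alpha,0}(\uv):\cdots:P_{\alpha,D-1}(\uv):Q_\alpha(\uv)]\bigr)+(D-1)\,h\bigl(w^{(j)}(\uv)\bigr).
\]

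To finish, since all coordinates lie in $\Zz$, the projective height above is at most $\log\max_i(|P_{\alpha,i}(\uv)|,|Q_\alpha(\uv)|)$. The elementary estimate $\log|g(\uv)|\le q\log\deg g+h(g)+\deg g\cdot\log\max(1,|\uv|)$ recorded just before Lemma~\ref{L_bound_fielddisc_spec}, applied to $g=P_{\alpha,i}$ and $g=Q_\alpha$ (for which $\deg g\le\odeg\alpha$ and $h(g)\le\oh(\alpha)$ by~\eqref{degbar_hbar}), bounds this projective height by $q\log\odeg\alpha+\oh(\alpha)+\odeg\alpha\cdot\log\max(1,|\uv|)$. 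For $h(w^{(j)}(\uv))$ I insert the bound $D+1+q\log d_0^*+h_0^*+d_0^*\log\max(1,|\uv|)$ that follows from the display preceding Lemma~\ref{L_bound_fielddisc_spec}. Collecting terms then produces the asserted inequality.

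The main obstacle is establishing the crucial intermediate bound. A naive estimate writing $\alpha^{(j)}(\uv)$ as a sum of $D$ summands $(P_{\alpha,i}(\uv)/Q_\alpha(\uv))(w^{(j)}(\uv))^{i}$ and invoking subadditivity of the height would produce factors of $D$ in front of both $\oh(\alpha)$ and $\odeg\alpha$, which is coarser than the lemma's bound. The point of handling the numerator coefficients projectively is that the factor $D$ appearing in the archimedean triangle inequality is promoted—via $\sum_{v\mid\infty}d_v=1$—to merely a single additive $\log D$ in the final estimate, so that $\oh(\alpha)$ and $\odeg\alpha$ appear with coefficient one.
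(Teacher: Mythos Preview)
The paper itself gives no proof here; it simply cites Lemma~5.6 of Evertse--Gy\H{o}ry~\cite{EGy9}. Your argument is presumably a reconstruction of that one, and the core of it is correct: the place-by-place estimate yielding
\[
h(\alpha^{(j)}(\uv))\le\log D+h_{\mathrm{proj}}\bigl([P_{\alpha,0}(\uv):\cdots:P_{\alpha,D-1}(\uv):Q_\alpha(\uv)]\bigr)+(D-1)\,h\bigl(w^{(j)}(\uv)\bigr)
\]
is valid, and the two bounds you then insert (for the projective height via the elementary estimate on $\log|g(\uv)|$, and for $h(w^{(j)}(\uv))$ via the displayed bound on $\sum_j h(w^{(j)}(\uv))$) are both legitimate. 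Your remark about why the projective treatment is needed---to keep the coefficient of $\oh(\alpha)$ and $\odeg\alpha$ equal to~$1$---is exactly the point.

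One small bookkeeping caveat on ``collecting terms'': what your three pieces literally add up to is
\[
\log D + (D^2-1) + q\log\odeg\alpha + (D-1)q\log d_0^* + \oh(\alpha) + (D-1)h_0^* + \bigl(\odeg\alpha+(D-1)d_0^*\bigr)\log\max(1,|\uv|),
\]
i.e.\ with $D-1$ rather than $D$ in front of $q\log d_0^*$, $h_0^*$, $d_0^*\log\max(1,|\uv|)$, but with constant term $\log D+D^2-1$ rather than $D^2$. The stated inequality then follows provided the slack $q\log d_0^*+h_0^*+d_0^*\log\max(1,|\uv|)$ absorbs $\log D-1$; this is automatic for $D\le 2$ and otherwise needs a harmless convention such as $d_0^*\ge 1$ or $h_0^*\ge 1$ (or interpreting $\log$ as $\log^*$, as the paper does elsewhere, since the lemma as written already has $\log d_0^*$ and $\log\odeg\alpha$ undefined when these quantities vanish). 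This is a cosmetic matter and does not affect any application in the paper.
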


\begin{proof}
This is Lemma 5.6 in Evertse and Gy\H ory \cite{EGy9}.
\end{proof}

\begin{lemma}\label{L_bound_hbar by_ha_of conjugates of specializations}
Let $\al\in B$, $\al\ne 0$, and let $N$ be an integer with
\begin{equation}\label{define_Nal}
N \geq \max(\odeg \al, 2Dd_0^*+2(q+1)(d_1^*+1)).
\end{equation}
Then the set
$$
\s:=\left\{ \uv\in \Z^q \ : \ |\uv|\leq N, \HH(\uv)\ne 0  \right\}
$$
is non-empty, and
\begin{equation}\label{bound_hbar by_ha_of conjugates of specializations}
\oh(\al)\leq 5 N^4(h_1^*+1)^2+2D(h_1^*+1)H,
\end{equation}
where $H:=\max\{ h(\al^{(j)}(\uv)) \ : \ \uv \in \s, \ j=1, \dots, D \}$.
\end{lemma}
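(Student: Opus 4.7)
The plan follows a three-step specialization-and-interpolation scheme: (i) show $\s\ne\emptyset$ by a Schwartz--Zippel zero-count for $\HH$; (ii) for each $\uv\in\s$, apply Lemma \ref{L_bound_hbar_for_q_eq_0} to the polynomial $\F_{\uv}$ and the conjugates $\al^{(j)}(\uv)$ to bound the specialized integer values arising from the canonical representation of $\al$; and (iii) use Lagrange interpolation over sufficiently many $\uv\in\s$ to transfer those pointwise bounds into bounds on the polynomial coefficients of $P_{\al,i}$ and $Q_{\al}$, which is precisely $\oh(\al)$.

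For (i), $\HH=\Delta_{\F}\cdot\F_D\cdot f\in A_0$ is a nonzero polynomial in $q$ variables with $\deg\HH\leq (2D-1)d_0^*+d_1^*$ by \eqref{bound_deg_H}. A standard Schwartz--Zippel bound shows that $\HH$ vanishes at at most $(2N+1)^{q-1}\deg\HH$ integer points in the box $|\uv|\leq N$. The hypothesis $N\geq 2Dd_0^*+2(q+1)(d_1^*+1)$ forces $2N+1>\deg\HH$, so $|\s|\geq(2N+1)^q-(2N+1)^{q-1}\deg\HH$ is strictly positive and in fact comparable to $N^q$. For (ii), fix the canonical form $\al=Q^{-1}\sum_{i=0}^{D-1}P_iw^i$ from \eqref{representation_typeII}. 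For $\uv\in\s$ we have $f(\uv)\ne 0$, and since $Q$ divides a power of $f$ in $A_0[f^{-1}]$, also $Q(\uv)\ne 0$. Let $d_{\uv}:=\gcd(P_0(\uv),\ldots,P_{D-1}(\uv),Q(\uv))$ and set $M_{\uv}:=\max(|Q(\uv)|,|P_0(\uv)|,\ldots,|P_{D-1}(\uv)|)$. Applying Lemma \ref{L_bound_hbar_for_q_eq_0} to $\F_{\uv}$ (whose roots are the distinct $w^{(j)}(\uv)$) and the $D$ conjugates $\al^{(j)}(\uv)=Q(\uv)^{-1}\sum P_i(\uv)(w^{(j)}(\uv))^i$, with the coprime integers $P_i(\uv)/d_{\uv}$ and $Q(\uv)/d_{\uv}$, yields
\[
\log(M_{\uv}/d_{\uv})\leq 2D^2+(D-1)h(\F_{\uv})+DH.
\]
Combined with the earlier bound $h(\F_{\uv})\leq q\log d_0^*+h_0^*+d_0^*\log\max(1,|\uv|)$ and $|\uv|\leq N$, the right-hand side is of order $N(h_1^*+1)+DH$.

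For (iii), each $P_{\al,i}$ and $Q_{\al}$ has total degree at most $\odeg\al\leq N$ in $q$ variables, hence has at most $\binom{N+q}{q}$ monomial coefficients. Choose a subset of $\s$ of that size on which the associated Vandermonde-type evaluation matrix $(\uv_k^{I})_{k,I}$ is invertible; this is possible because the determinant is a nonzero polynomial in the evaluation points and $|\s|$ is much larger than $\binom{N+q}{q}$. Inverting via Cramer's rule expresses each monomial coefficient of $P_{\al,i}$ (resp.\ $Q_{\al}$) as a rational combination of the values $P_{\al,i}(\uv_k)$ (resp.\ $Q_{\al}(\uv_k)$), with denominators controlled by explicit functions of $N$ and $q$. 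Substituting the bounds from Step (ii) and absorbing the $\log d_{\uv}$ contributions into the polynomial-in-$N$ prefactor yields the stated estimate \eqref{bound_hbar by_ha_of conjugates of specializations}.

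The main obstacle is Step (iii): Lemma \ref{L_bound_hbar_for_q_eq_0} controls only the coprime reduced integers $P_{\al,i}(\uv)/d_{\uv}$ and $Q_{\al}(\uv)/d_{\uv}$, whereas the interpolation demands upper bounds on the individual values $P_{\al,i}(\uv)$ and $Q_{\al}(\uv)$. The resolution exploits the polynomial coprimality $\gcd(P_{\al,0},\ldots,P_{\al,D-1},Q_{\al})=1$ in $A_0$: the set of $\uv$ for which $d_{\uv}$ is large is Zariski-thin, and $\log d_{\uv}$ along the chosen interpolation grid can be estimated by a content/resultant-style argument. All of these logarithmic losses, together with the Vandermonde inverse denominators of size bounded polynomially in $N$, are accommodated by the relatively generous prefactor $5N^4(h_1^*+1)^2$.
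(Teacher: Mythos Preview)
The paper itself gives no proof here; it simply cites Lemma 5.7 of Evertse--Gy\H{o}ry \cite{EGy9}. So there is no in-paper argument to compare against, and your outline is effectively an attempt to reconstruct that cited proof. Steps (i) and (ii) are fine and are indeed the natural ingredients.

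The gap is in Step (iii), and you have correctly located it but not closed it. Your interpolation needs upper bounds on $|P_{\al,i}(\uv)|$ and $|Q_{\al}(\uv)|$, while Lemma \ref{L_bound_hbar_for_q_eq_0} only bounds $M_{\uv}/d_{\uv}$. The two devices you invoke to control $d_{\uv}$ do not work as stated. First, the set $\{\uv: d_{\uv}>C\}$ is an \emph{arithmetic} condition (a gcd being large), not a Zariski-closed one; calling it ``Zariski-thin'' is incorrect, and even its density-thinness does not by itself let you select an interpolation grid on which $\log d_{\uv}$ is uniformly small. Second, a ``content/resultant-style argument'' from the polynomial coprimality $\gcd(P_{\al,0},\ldots,P_{\al,D-1},Q_{\al})=1$ would produce an identity $\sum g_i P_{\al,i}+g_D Q_{\al}=R$ with $R\in\Zz\setminus\{0\}$, so that $d_{\uv}\mid R$; but any effective bound on $|R|$ necessarily involves the heights of the $P_{\al,i}$ and $Q_{\al}$, which is exactly $\oh(\al)$ --- the quantity you are trying to bound. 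This is circular. Consequently the claim that the $\log d_{\uv}$ contributions are ``accommodated by the relatively generous prefactor $5N^4(h_1^*+1)^2$'' is unjustified: a priori $\log d_{\uv}$ can be of the order of $\oh(\al)$ itself (for instance, $Q_{\al}$ may be a large power of an integer prime dividing $f$, with $\deg Q_{\al}=0$ but $h(Q_{\al})$ arbitrarily large).

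To repair Step (iii) you need an argument that either bounds $d_{\uv}$ (or, equivalently, $|Q_{\al}(\uv)|$) independently of $\oh(\al)$ --- for example by exploiting more precisely that $Q_{\al}$ divides a power of $f$ together with a separate height bound for that particular factor --- or that bypasses the individual values $P_{\al,i}(\uv)$, $Q_{\al}(\uv)$ altogether, e.g.\ by interpolating suitable $K_0$-rational symmetric functions of the $\al^{(j)}(\uv)$ for which no spurious common factor appears. As written, the proposal is a plausible sketch but not yet a proof.
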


\begin{proof}
This is Lemma 5.7 in Evertse and Gy\H ory \cite{EGy9}.
\end{proof}

\section{Bounding the height and the exponent $m$}\label{S_boundtheheight}

We shall derive the height bounds
\eqref{Thue_hbound_B} in Proposition \ref{P_Thue_over_B}
and \eqref{hypersuper_hbound_B} in Proposition \ref{P_hypersuper_over_B},
as well as the upper bound for $m$ in
Proposition \ref{P_ST_over_B} by combining the specialization techniques
from the previous section with existing effective results for Diophantine
equations over $S$-integers of a number field,
namely Gy\H{o}ry and Yu \cite{GyYu1}
for Thue equations,
and the three authors \cite{BA18} for hyper- and superelliptic equations
and the Schinzel-Tijdeman equation.

\subsection{Thue equations}

In the statement of the result of Gy\H{o}ry and Yu we need some notation.

For an algebraic number field $L$, we denote by
$d_L$, $\OO_L$, $\M_L$, $\Delta_L$, $h_L$, $r_L$ and $R_L$ the degree, ring of integers,
set of places, discriminant, class number, unit rank and regulator of $L$.
The absolute norm of an ideal $\fa$ of $\OO_L$ is denoted by $N(\fa)$.

Let $L$ be an algebraic number field and
let $S$ be a finite set of places of $L$ which contains all infinite places. Denote by $s$ the cardinality of $S$. Recall that
the ring of $S$-integers $\OO_S$ is defined as
$$
\OO_S=\{ \al\in L \ : \ |\al|_v \leq 1 \ \text{for} \ v\in \M_L \setminus S \}.
$$
If $S$ consists only of the infinite places of $L$, we put $P:=2, Q:=2$. If $S$ contains also finite places, we denote by
$\fp_1, \dots, \fp_t$ the prime ideals corresponding to the finite places of $S$, and we put
$$
P:=\max(N(\fp_1), \dots, N(\fp_t)), \quad\quad Q:=N(\fp_1 \dots \fp_t).
$$
The $S$-regulator associated with $S$ is denoted by $R_S$. If $S$ consists only of the infinite places of $L$ it is just $R_L$, while
otherwise
$$
R_S=h_S R_L \prod_{i=1}^t \log N(\fp_i),
$$
where $h_S$ is a (positive) divisor of $h_L$. It is an easy consequence of formula (2) of Louboutin \cite{Louboutin1} that
\begin{equation}\label{bound_hLRL}
h_L R_L \leq |\Delta_L|^{1/2} (\log^* |\Delta_L|)^{d_L-1};
\end{equation}
cf. formula (59) of Gy\H ory and Yu, \cite{GyYu1}. Further, we have
\begin{equation}\label{R_S}
R_S \leq |\Delta_L|^{1/2} (\log^* |\Delta_L|)^{d_L-1} (\log^* Q)^s;
\end{equation}
see (6.1) in Evertse and Gy\H ory \cite{EGy9}. In view of \eqref{bound_hLRL} this is true also if $t=0$.

\subsubsection{Results in the number field case}
Let $F(X,Y)\in L[X,Y]$ be a binary form of degree $n \geq 3$ with splitting field $L$ and with at least three pairwise non-proportional linear
factors. Further, let $\be \in L \setminus \{ 0 \}$
and consider the Thue equation
\begin{equation}\label{Thue_over_NF}
F(\xi,\eta)=\be \quad\quad \text{in} \quad \xi,\eta\in\OO_S.
\end{equation}
For a polynomial $G$ with algebraic coefficients, we denote by $h(G)$
the maximum of the logarithmic heights of its coefficients.

\begin{proposition}\label{P_Thue_over_NF}
All solutions $(\xi,\eta)\in\OO_S^2$ of equation \eqref{Thue_over_NF} satisfy
\begin{eqnarray}\label{bound_Thue_NF}
&&\max(h(\xi),h(\eta))\leq c_1PR_S\left( 1+(\log^* R_S)/\log^* P \right)\times
\\
\nonumber
&&\qquad\qquad\qquad\qquad\qquad
\times\left( c_2R_L+\frac{h_L}{d_L}\log Q+2nd_L H_1+H_2 \right),
\end{eqnarray}
where
$$
H_1=\max (1,h(F)),\ \ H_2=\max (1,h(\de )),
$$
$$
c_1=250n^6s^{2s+3.5} \cdot 2^{7s+27} (\log 2s)d_L^{2s+4} (\log^*(2d_L))^3
$$
and
$$
c_2=
\begin{cases}
0 & \text{if $r_L=0$} \\
1/d_L & \text{if $r_L=1$} \\
29er_L!r_L\sqrt{r_L-1}\log d_L & \text{if $r_L\geq 2$}. \\
\end{cases}
$$
\end{proposition}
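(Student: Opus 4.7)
The plan is to reduce \eqref{Thue_over_NF} to a classical $S'$-unit equation via the Siegel identity, and then invoke the sharpest available linear-forms-in-logarithms estimates (Matveev at archimedean places, Yu at non-archimedean ones), which is exactly the strategy of Gy\H{o}ry--Yu \cite{GyYu1}.

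First I exploit the three pairwise non-proportional linear factors of $F$ over the splitting field $L$: write them as $a_iX+b_iY$ for $i=1,2,3$ with $a_ib_j-a_jb_i\neq 0$ whenever $i\neq j$. I enlarge $S$ to a finite set of places $S'$ of $L$ chosen so that all $a_i,b_i$, all $a_ib_j-a_jb_i$ and $\beta$ become $S'$-units, and the remaining linear factors of $F$ become $S'$-integers. Using $h(\alpha_i)\leq h(F)+\log^*2$ (a Mahler-style estimate for the roots of $F$), the cardinality $s'=|S'|$, the $S'$-regulator $R_{S'}$, and the associated parameters $P',Q'$ can be bounded in terms of the original $s,R_S,P,Q$ together with additive contributions proportional to $nd_LH_1+H_2$ and $\log|\Delta_L|$.

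Next, for a solution $(\xi,\eta)\in\OO_S^2$ I form $\gamma_i:=a_i\xi+b_i\eta\in\OO_{S'}$. Since $F(\xi,\eta)=\beta$ is an $S'$-unit, and the product $\prod_{i=1}^n(a_i\xi+b_i\eta)$ equals $F(\xi,\eta)/a_0$ up to an $S'$-unit scalar, each $\gamma_i$ ($i=1,2,3$) is itself an $S'$-unit. The linear dependence of three linear forms in two variables gives the Siegel identity
\[
(a_2b_3-a_3b_2)\gamma_1+(a_3b_1-a_1b_3)\gamma_2+(a_1b_2-a_2b_1)\gamma_3=0,
\]
and after division by $-(a_1b_2-a_2b_1)\gamma_3$ this becomes an $S'$-unit equation $u+v=1$ with explicit $u,v\in\OO_{S'}^{*}$. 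The standard effective bound for $S'$-unit equations over number fields, obtained by combining Matveev's lower bound for archimedean linear forms in logarithms with Yu's lower bound for $p$-adic linear forms in logarithms, yields an upper estimate for $\max(h(u),h(v))$ of precisely the shape
\[
c_1 P' R_{S'}\Bigl(1+\frac{\log^* R_{S'}}{\log^* P'}\Bigr)\Bigl(c_2 R_L+\frac{h_L}{d_L}\log Q'+H'\Bigr).
\]
Translating the height bound on $u,v$ back to $h(\gamma_1),h(\gamma_2),h(\gamma_3)$, and inverting any two of the relations $\gamma_i=a_i\xi+b_i\eta$ by a $2\times 2$ Cramer step, transfers the bound to $\max(h(\xi),h(\eta))$ and yields \eqref{bound_Thue_NF}.

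The main obstacle is the quantitative bookkeeping. The enlarged parameters $s',P',Q',R_{S'}$ must be compared to $s,P,Q,R_S$ by inequalities that cleanly separate the contribution of $H_1=\max(1,h(F))$ and $H_2=\max(1,h(\beta))$ from the purely field-theoretic contribution (regulator, class number, degree, discriminant), so that in the end the two groups of parameters appear respectively in the factors $2nd_LH_1+H_2$ and $c_2R_L+(h_L/d_L)\log Q$ of \eqref{bound_Thue_NF}. Extracting the explicit numerical constants in $c_1$ and in the three cases of $c_2$ (distinguished by $r_L=0$, $r_L=1$, $r_L\geq 2$) requires using the current state-of-the-art versions of Matveev's and Yu's theorems and a careful choice of fundamental $S$-units whose heights are controlled by $R_S$ times a mild factor; this is where essentially all of the technical effort resides.
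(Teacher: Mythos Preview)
The paper does not prove this proposition at all: its entire proof reads ``This is Corollary 3 of Gy\H{o}ry and Yu \cite{GyYu1}.'' So there is nothing to compare your argument against; the proposition is quoted as a black box from the literature, and the paper's own contribution lies elsewhere (in Propositions \ref{P_Thue_over_B}, \ref{P_hypersuper_over_B}, \ref{P_ST_over_B} and the specialization machinery).

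That said, your sketch is a fair outline of the Gy\H{o}ry--Yu strategy: pass to three non-proportional linear factors in the splitting field $L$, build the Siegel identity, reduce to an $S'$-unit equation, and apply Matveev/Yu. Two remarks if you intend to actually carry this out rather than cite it. First, since by hypothesis $L$ is already the splitting field of $F$, the linear factors live in $L$ from the start; the enlargement of $S$ is only needed to force certain coefficients and cross-determinants to be units, and one must check that the extra places contribute to $P,Q,R_S$ in a way absorbable into the terms $2nd_LH_1$ and $H_2$. Second, the passage from $h(u),h(v)$ back to $h(\gamma_i)$ is not immediate: the $S'$-unit bound controls only the ratios $\gamma_i/\gamma_j$, and recovering $h(\gamma_i)$ itself requires an additional step using the product relation $\prod_i\gamma_i=\beta/a_0$ (or a fundamental-system-of-$S$-units argument), which is where the term $(h_L/d_L)\log Q$ enters. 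Your sketch glosses over this, and it is precisely here that the careful constants of \cite{GyYu1} are earned.
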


\begin{proof}
This is Corollary 3 of Gy\H ory and Yu \cite{GyYu1}.
\end{proof}

We shall also need the following.

\begin{lemma}\label{L_discriminant_divisibility}
If $L$ is the composite of the algebraic number fields $L_1, \dots, L_k$ with degrees $d_{L_1}, \dots, d_{L_k}$ and
discriminants $\Delta_{L_1}, \dots,  \Delta_{L_k}$, then $\Delta_L$ divides $\Delta_{L_1}^{d_L/d_{L_1}} \dots  \Delta_{L_k}^{d_L/d_{L_k}}$ in $\Z$.
\end{lemma}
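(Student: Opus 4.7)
The plan is to proceed by induction on $k$. The case $k=1$ is trivial, and the induction step reduces cleanly to $k=2$: writing $L=L'\cdot L_k$ with $L'=L_1\cdots L_{k-1}$, applying the inductive hypothesis to $L'$, and then the two-field statement to $L$, the exponents compose correctly via $d_L/d_{L_i}=(d_L/d_{L'})(d_{L'}/d_{L_i})$.

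For $k=2$, my first step is the tower formula for the discriminant,
\[
\Delta_L = N_{L_1/\Q}(\Delta_{L/L_1})\cdot \Delta_{L_1}^{[L:L_1]},
\]
which already produces the factor $\Delta_{L_1}^{d_L/d_{L_1}}$ and reduces the claim to the divisibility $N_{L_1/\Q}(\Delta_{L/L_1})\mid \Delta_{L_2}^{d_L/d_{L_2}}$ in $\Z$. I would verify this one rational prime at a time, using the standard identity $v_p(\Delta_K)=\sum_{\mathfrak{q}\mid p} v_p(\Delta_{K_\mathfrak{q}/\Q_p})$ relating the global discriminant to its local completions. This reduces matters to a local compositum statement: if $E=E_1E_2$ is a compositum of finite extensions of $\Q_p$ inside $\overline{\Q}_p$, then
\[
v_p(\Delta_E)\leq (n/n_1)\,v_p(\Delta_{E_1})+(n/n_2)\,v_p(\Delta_{E_2}),
\]
where $n=[E:\Q_p]$ and $n_i=[E_i:\Q_p]$.

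For the local step I would exploit the monogenicity of local rings of integers: writing $\OO_{E_2}=\Z_p[\alpha]$, we have $\OO_E\supseteq \OO_{E_1}[\alpha]$, so the relative discriminant satisfies $\Delta_{E/E_1}\mid \mathrm{disc}_{E_1}(g)$, where $g\in\OO_{E_1}[X]$ is the minimal polynomial of $\alpha$ over $E_1$. Since $g$ divides the $\Q_p$-minimal polynomial $f\in\Z_p[X]$ of $\alpha$ in $\OO_{E_1}[X]$, $\mathrm{disc}(g)$ divides $\mathrm{disc}(f)=\Delta_{E_2/\Q_p}$ in $\OO_{E_1}$, giving $\Delta_{E/E_1}\mid \Delta_{E_2/\Q_p}\,\OO_{E_1}$. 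Taking norms, applying the tower formula, and summing over primes $\mathfrak{q}\mid p$ of $L$ via the fundamental identity $\sum_{\mathfrak{q}\mid \mathfrak{p}_i}[L_\mathfrak{q}:(L_i)_{\mathfrak{p}_i}]=d_L/d_{L_i}$ would then yield the required global inequality.

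The main obstacle will be sharpening the exponent on $v_p(\Delta_{E_2})$ in the local step from the crude $[E_1:\Q_p]$ produced by the naive norm computation down to the desired $n/n_2=[E:E_2]$. These coincide iff $E_1,E_2$ are linearly disjoint over $F:=E_1\cap E_2$; otherwise $n<n_1n_2/[F:\Q_p]$, and the surplus must be absorbed by redoing the base-change argument over $F$ (where $\OO_{E_2}$ is still monogenic as an $\OO_F$-algebra) and exploiting the nonnegativity of $v_p(\Delta_{F/\Q_p})$ to cancel the excess in the intermediate tower $\Q_p\subseteq F\subseteq E_1\subseteq E$. Though technical, this is a purely local and elementary piece of bookkeeping.
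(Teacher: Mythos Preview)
The paper does not prove this lemma at all; it simply cites Stark \cite{Stark1}. Your outline is in fact close to the standard argument one finds there: induction to $k=2$, the tower formula, localisation, and monogenicity of $p$-adic rings of integers. Up to and including the identification of the ``main obstacle'' (sharpening the exponent on $v_p(\Delta_{E_2})$ from $n_1=[E_1:\Qq_p]$ to $n/n_2=[E:E_2]$) everything is fine.

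The gap is in your proposed resolution of that obstacle. Passing to $F:=E_1\cap E_2$ does not force linear disjointness: two distinct conjugate non-Galois cubic extensions of $\Qq_p$ have $F=\Qq_p$ yet compositum of degree $6<9$, so $[E_1:F]=3>2=[E:E_2]$ and there is no $v_p(\Delta_F)$-term to absorb the surplus. The discriminant-only bookkeeping therefore does not close.

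The clean fix is to work with differents rather than discriminants. Writing $\OO_{E_2}=\Zz_p[\alpha]$ with $\Qq_p$-minimal polynomial $f$ and $E_1$-minimal polynomial $g\mid f$, the factorisation $f=gh$ in $\OO_{E_1}[X]$ gives $f'(\alpha)=g'(\alpha)h(\alpha)$, whence
\[
\mathfrak{d}_{E/E_1}\ \mid\ g'(\alpha)\,\OO_E\ \mid\ f'(\alpha)\,\OO_E\ =\ \mathfrak{d}_{E_2/\Qq_p}\,\OO_E .
\]
Taking $N_{E/\Qq_p}$ and using $e(E/E_2)f(E/E_2)=[E:E_2]$ yields exactly
\(
v_p\big(N_{E_1/\Qq_p}(\Delta_{E/E_1})\big)\le [E:E_2]\,v_p(\Delta_{E_2}),
\)
which is the sharp local inequality you need. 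Summing over the primes above $p$ via $\sum_{\mathfrak q\mid p}[L_{\mathfrak q}:\Qq_p]=d_L$ then gives the global divisibility, and your induction finishes the job.
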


\begin{proof}
See Stark \cite{Stark1}.
\end{proof}

\begin{lemma}\label{L_discriminant_extension}
Let $L$ be an algebraic number field and $\theta$ a zero
of a polynomial $G\in L[X]$ of degree $n$ without multiple roots.
Then
\[
|\Delta_{L(\theta )}|\leq n^{(2n-1)d_L}e^{(2n^2-2)h(G)}|\Delta_L|^{[L(\theta ):L]}.
\]
\end{lemma}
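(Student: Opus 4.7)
The plan is to combine the transitivity of discriminants in a tower of number fields with a height bound on the discriminant of a suitably rescaled polynomial. Set $M := L(\theta)$. Since $G$ has only simple roots, the extension $M/L$ is separable and its degree $m := [M:L]$ divides $n$, in particular $m \leq n$. The transitivity formula for absolute discriminants applied to $\mathbb{Q} \subset L \subset M$ yields
\[
|\Delta_M| \;=\; |N_{L/\mathbb{Q}}(\Delta_{M/L})|\cdot |\Delta_L|^m,
\]
so it suffices to bound $|N_{L/\mathbb{Q}}(\Delta_{M/L})|$ by $n^{(2n-1)d_L}\exp((2n^2-2)h(G))$.

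To estimate $\Delta_{M/L}$, I would pass to an integral generator. Writing $G(X) = a_0X^n + a_1X^{n-1} + \cdots + a_n$, set $\theta^{*} := a_0\theta$, so that $L(\theta^{*}) = M$ and $\theta^{*}$ is a root of the monic polynomial
\[
G^{*}(Y) \;:=\; a_0^{n-1}G(Y/a_0) \;=\; Y^n + a_1 Y^{n-1} + a_0 a_2 Y^{n-2} + \cdots + a_0^{n-1}a_n \;\in\; L[Y].
\]
A direct computation from $\mathrm{disc}(G) = a_0^{2n-2}\prod_{i<j}(\theta_i-\theta_j)^2$ gives $\mathrm{disc}(G^{*}) = a_0^{(n-1)(n-2)}\mathrm{disc}(G)$. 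Working place by place over the finite places $v$ of $L$: after rescaling $\theta^{*}$ by a suitable power of a uniformizer at $v$ so as to be $v$-integral (without changing the field it generates), the classical fact that the relative different, and hence the relative discriminant, divides the discriminant of the minimal polynomial of any integral generator—combined with the observation that this minimal polynomial over $L_v$ divides $G^{*}$, so its discriminant divides $\mathrm{disc}(G^{*})$ up to powers of the cofactor's leading coefficient—gives $v(\Delta_{M/L}) \leq v(\mathrm{disc}(G^{*}))$. Assembling these local bounds yields
\[
|N_{L/\mathbb{Q}}(\Delta_{M/L})| \;\leq\; |N_{L/\mathbb{Q}}(\mathrm{disc}(G^{*}))|.
\]

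The final step is to estimate $|N_{L/\mathbb{Q}}(\mathrm{disc}(G^{*}))|$. The discriminant of a monic degree-$n$ polynomial is a polynomial of total degree $2n-2$ in its coefficients, whose integer coefficients are bounded by $n^{2n-1}$ via the Sylvester determinant formula $\mathrm{disc} = \pm\mathrm{Res}(P,P')$, a $(2n-1)\times(2n-1)$ determinant. Since each coefficient $a_0^{j-1}a_j$ of $G^{*}$ has logarithmic height at most $n\,h(G)$, one obtains
\[
h(\mathrm{disc}(G^{*})) \;\leq\; (2n-1)\log n + (2n-2)\cdot n\cdot h(G),
\]
and the standard bound $|N_{L/\mathbb{Q}}(\alpha)| \leq \exp(d_L\, h(\alpha))$ for $\alpha \in L$ completes the estimate, with $(2n-2)n = 2n^2 - 2n \leq 2n^2 - 2$ fitting inside the exponent claimed in the lemma.

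The main obstacle is the middle step: justifying place by place that $v(\Delta_{M/L}) \leq v(\mathrm{disc}(G^{*}))$, in particular at finite places of $L$ where the coefficients of $G^{*}$ fail to be $v$-integral. This requires a careful local analysis, namely $v$-adically rescaling $\theta^{*}$ to a local integral generator while bookkeeping the compensating powers of a uniformizer, so that the relation between the polynomial discriminant of $G^{*}$ and the discriminant of the minimal polynomial of the integral generator remains under control. All other ingredients—the tower formula for discriminants, the divisibility $\mathfrak{d}_{M/L} \mid f'(\theta^{*})\mathcal{O}_M$ for an integral generator with minimal polynomial $f$, the Sylvester expression for the discriminant, and the height-to-norm inequality—are standard.
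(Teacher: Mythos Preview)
Your overall strategy---tower formula for discriminants, pass to a monic polynomial via $\theta^*=a_0\theta$, bound the relative discriminant by a polynomial discriminant, then estimate the norm via heights---is the right one, and it is essentially what the cited result in \cite{BA18} does. However, the intermediate inequality you write down,
\[
|N_{L/\Qq}(\Delta_{M/L})| \leq |N_{L/\Qq}(\mathrm{disc}(G^*))|,
\]
is \emph{false} in general, and your local rescaling does not rescue it in the form stated. Take $L=\Qq$, $p\equiv 1\pmod 4$ prime, and $G(X)=p^{-1}X^2-1$. Then $\theta=\sqrt{p}$, $M=\Qq(\sqrt{p})$, $\Delta_{M/L}=p$, while $G^*(Y)=Y^2-1/p$ has discriminant $4/p$; so the claimed inequality reads $p\leq 4/p$. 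The place-by-place assertion $v(\Delta_{M/L})\leq v(\mathrm{disc}(G^*))$ fails at $v=p$ for the same reason. Rescaling $\theta^*$ to a $v$-integral generator replaces $G^*$ by a \emph{different} polynomial whose discriminant differs from $\mathrm{disc}(G^*)$ by a power of the uniformizer; you cannot then conclude the bound for $\mathrm{disc}(G^*)$ itself.

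The fix is not to aim for the norm inequality with $\mathrm{disc}(G^*)$, but to keep the rescaling factors and bound them by the non-archimedean parts of the heights of the coefficients. Concretely, at a finite place $v$ set $e_v:=\max\big(0,-\min_j v(a_0^{j-1}a_j)\big)$; rescaling multiplies the polynomial discriminant by a factor of $v$-adic valuation $n(n-1)e_v$, and $\sum_v d_v e_v\log N_v$ is bounded by $(n+1)$ times the sum of the non-archimedean contributions to $h(G)$. Carrying this correction through yields a bound with $(2n-2)$ times an ``affine'' height $h'(G)=\sum_v d_v\log\max(1,|a_0|_v,\ldots,|a_n|_v)$ in the exponent, and then $h'(G)\leq (n+1)h(G)$ gives the stated $(2n^2-2)h(G)$. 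This is exactly how the paper proceeds: it invokes \cite[Lemma~4.1]{BA18}, which produces the bound with exponent $(2n-2)h'(G)$, and then applies $h'(G)\leq (n+1)h(G)$.
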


\begin{proof}
This is a slight modification of the second assertion of \cite[Lemma 4.1]{BA18}.
In fact, this lemma gives the same bound but with an exponent
$(2n-2)h'(G)$ on $e$, where for $G=\sum_{k=0}^n b_kX^{n-k}$ we define
\[
h'(G)=\sum_{v\in\M_L} d_v\log\max (1,|b_0|_v\kdots |b_n|_v).
\]
This height is easily estimated from above by $\sum_{k=0}^n h(b_k)\leq (n+1)h(G)$.
Our lemma follows.
\end{proof}

\subsubsection{Concluding the proof of Proposition \ref{P_Thue_over_B}}

\begin{proof}[Proof of \eqref{Thue_hbound_B} in Proposition \ref{P_Thue_over_B}]
We first consider the case $q>0$. Let $x,y$ be a solution of \eqref{Thue_over_B} in $B$. We keep the notation introduced in
Section \ref{S_specializations}.
Recall that $\HH:=\Delta_{\F} \cdot \F_D \cdot f$ and by \eqref{bound_deg_H} and
\eqref{bound_d0*_h0*} we get
\begin{equation}\label{bound_deg_H_II_in Thue}
\deg \HH \leq  (nd)^{\exp O(r)}.
\end{equation}

Choose $\uv\in \Z^q$ with $\HH(\uv)\ne 0$, choose $j\in \{ 1, \dots ,D \}$, and denote by $F_{\uv,j}$, $\de^{(j)}(\uv)$, $x^{(j)}(\uv)$, $y^{(j)}(\uv)$, the images of $F,\de,x,y$ under $\vp_{\uv,j}$. Then $F_{\uv,j}$ has its coefficients in $K_{\uv,j}$. Further, let
$L$ denote the splitting field of $F_{\uv,j}$ over $K_{\uv,j}$, and $S$ the set of places of $L$ which consists of all infinite places
and all finite places lying above the rational prime divisors of $f(u)$. Note that $w^{(j)}(\uv)$ is an algebraic integer and
$f(\uv) \in \OO_S^*$. Thus $\vp_{\uv,j}(B)\subseteq \OO_S$ and it follows from \eqref{Thue_over_B} that
\begin{equation}\label{Thue_specialized}
F_{\uv,j}\left(x^{(j)}(\uv),y^{(j)}(\uv) \right)=\de^{(j)}(\uv), \quad\quad x^{(j)}(\uv),y^{(j)}(\uv)\in \OO_S.
\end{equation}

We already proved in Section \ref{S_boundthedegree} that \eqref{Thue_degbound_B} of Proposition \ref{P_Thue_over_B} holds, i.e.
we have
$$
\odeg x, \odeg y \leq (nd)^{\exp O(r)}.
$$
Hence we can apply Lemma \ref{L_bound_hbar by_ha_of conjugates of specializations} with
$$
N=\max\left( (nd)^{\exp O(r)}, 2Dd_0^*+2(q+1)(d_1^*+1)\right).
$$
In view of \eqref{bound_d0*_h0*}, $D\leq d^r$ and $q \leq r$ we get
\begin{equation}\label{bound_Thue_N}
N \leq (nd)^{\exp O(r)}.
\end{equation}
By applying Lemma \ref{L_bound_hbar by_ha_of conjugates of specializations}
with $\al =x$ and $\al =y$, and inserting $D\leq d^r$ and the upper bound
$h_1^*\leq (nd)^{\exp O(r)}(h+1)$ from  \eqref{bound_d0*_h0*},
it follows that there are $\uv\in\Zz^q$, $j\in\{ 1\kdots D\}$ with
\begin{equation}\label{u-bound}
|\uv |\leq (nd)^{\exp O(r)},\ \ \HH (\uv )\ne 0
\end{equation}
and
\begin{eqnarray}\label{hbar-bound}
&&\max (\oh (x),\oh (y))\leq (nd)^{\exp O(r)}\Big[(h+1)^2+
\\
\nonumber
&&\qquad\qquad\qquad\qquad\qquad
+ d^r(h+1)\max \big(h(x^{(j)}(\uv )),h(y^{(j)}(\uv ))\big)\Big].
\end{eqnarray}
We proceed further with this $\uv$, $j$ and apply Proposition \ref{P_Thue_over_NF} to equation \eqref{Thue_specialized} to derive an upper bound for $h(x^{(j)}(\uv))$ and $h(y^{(j)}(\uv))$. To do so we have to bound from above the parameters corresponding to those which occur in Proposition \ref{P_Thue_over_NF}.

Write $F=\sum_{k=0}^n a_kX^{n-k}Y^k$ and put
$$
\odeg F :=\max_{0\leq k\leq n} \odeg a_k,\ \ \oh (F):=\max_{0\leq k\leq n} \oh (a_k).
$$
Notice that by Lemma \ref{L_degbar_hbar_by_d*_h*}, applied
to $\de$ and the coefficients of $F$ with the choice $d^*=d$, $h^*=h$, we have
\begin{align}
\label{bound_degbar_F_degbar_de} \odeg F, \odeg \de &\leq (2d)^{\exp O(r)}, \\
\label{bound_hbar_F_hbar_de} \oh(F), \oh(\de) &\leq (2d)^{\exp O(r)}(h+1).
\end{align}

It follows from Lemma \ref{L_bound_ha_by_degbar_hbar},
$q\leq r$, $D\leq d^r$,
\eqref{bound_d0*_h0*}, \eqref{bound_degbar_F_degbar_de},
\eqref{bound_hbar_F_hbar_de}, and lastly \eqref{u-bound}, that
\begin{eqnarray}\label{Au}
h(F_{\uv,j}) &\leq& D^2+q(D\log d_0^*+\log\odeg F)+D h_0^* +\\
\nonumber
&&\qquad\qquad +\oh(F)+(D d_0^*+\odeg F)\log\max(1,|\uv|)
\\
\nonumber
&\leq& (nd)^{\exp O(r)}(h+1).
\end{eqnarray}
In a similar way, replacing $F$ by $\delta$, we obtain also
\begin{equation}\label{Bu}
h(\de^{(j)}(\uv)) \leq (nd)^{\exp O(r)}(h+1).
\end{equation}

We recall that $d_L$ and $\Delta_L$ denote the degree and the discriminant of $L$ over $\Q$. Since $[K_{\uv,j}:\Q] \leq D$, we
have $d_L\leq Dn!$. Let $G(X):=F(X,1)$,
and let $\theta_1\kdots\theta_{n'}$ be the roots of $G$.
We have $n'=n$ if $a_0\not= 0$ and $n'=n-1$ otherwise.
Then $L=K_{\uv ,j}(\theta_1\kdots\theta_{n'})$.
Denote by $d_{L_i}$ the degree and by $\Delta_{L_i}$ the discriminant of the number field $L_i:=K_{\uv ,j}(\theta_i)$,
$i=1, \dots, n'$. Then by Lemma \ref{L_discriminant_divisibility} we have
\begin{equation}\label{bound_Delta_L}
|\Delta_L|\leq \prod_{i=1}^{n'} |\Delta_{L_i}|^{d_L/d_{L_i}}.
\end{equation}
We estimate $|\Delta_L|$. First notice that by Lemma \ref{L_bound_fielddisc_spec}, inserting the estimates $q\leq r$, $D\leq d^r$,
\eqref{bound_d0*_h0*}, \eqref{u-bound},
\begin{eqnarray}\label{DiscKuj-bound}
|\Delta_{K_{\uv,j}}|&\leq&
D^{2D-1}\big( (d_0^*)^qe^{h_0^*}\max(1,|\uv |^{d_0^*}|)\big)^{2D-2}
\\
\nonumber
&\leq& \exp \big( (nd)^{\exp O(r)}(h+1)\big).
\end{eqnarray}
Further, by Lemma \ref{L_discriminant_extension}
and the estimates $D\leq d^r$, \eqref{Au}, \eqref{DiscKuj-bound},
\[
\begin{aligned}
|\Delta_{L_i}|&\leq n^{(2n-1)D}e^{(2n^2-2)h(F_{\uv ,j})}
|\Delta_{K_{\uv ,j}}|^{[L_i:K_{\uv,j}]}
\\
&\leq \exp\{ [L_i:K_{\uv ,j}]\cdot (nd)^{\exp O(r)}(h+1)\}.
\end{aligned}
\]
By inserting this into \eqref{bound_Delta_L}, using $[L:K_{\uv ,j}]\leq n!$,
we obtain
\begin{eqnarray}\label{bound_Delta_L_II}
|\Delta_L| &\leq&
\exp \left\{ (nd)^{\exp O(r)}(h+1)\cdot nd_L/d_{K_{\uv ,j}}\right\}
\\
\nonumber
&\leq&\exp\{ n! (nd)^{\exp O(r)}(h+1)\}.
\end{eqnarray}

By assumption \eqref{define d0*_d1*_h0*_h1*}, $f$ has degree at most $d_1^*$ and logarithmic height at most $h_1^*$.
Further, $f(\uv)\ne 0$ and by $q\leq r$,
\eqref{bound_d0*_h0*}, \eqref{u-bound},
\begin{equation}\label{bound_hfu}
|f(\uv)|\leq (d_1^*)^q e^{h_1^*}\max (1,|\uv|)^{d_1^*}
\leq \exp\{ (nd)^{\exp O(r)}(h+1)\}.
\end{equation}
The cardinality $s$ of $S$ is at most $d_L(1+\omega)$, where $\omega$ denotes the number of distinct prime divisors of $f(\uv)$.
By prime number theory,
\begin{equation}\label{bound_Thue s}
s= O (d_L\log^* |f(\uv )|/\log^*\log^* |f(\uv )|).
\end{equation}
From this estimate and \eqref{bound_hfu},
$D\leq d^r$, $d_L\leq n!d^r$,
one easily deduces that for $c_1$ coming from Proposition \ref{P_Thue_over_NF} we have
\begin{equation}\label{bound_c1}
c_1 \leq \exp\{ n! (nd)^{\exp O(r)}(h+1)\}.
\end{equation}
Next, we estimate $P,Q$ and $R_S$. By \eqref{bound_hfu}, $d_L\leq n!d^r$ we have
\begin{equation}\label{bound_Thue_P_Q}
P\leq Q \leq |f(\uv)|^{d_L} \leq \exp\{ n!(nd)^{\exp O(r)}(h+1)\}.
\end{equation}
To estimate $R_S$, we use \eqref{R_S}. Then, in view of \eqref{bound_Delta_L_II} and $d_L\leq n!d^r$, we have
\begin{equation}\label{bound_Thue Delta_L_II}
|\Delta_L|^{1/2}(\log^* |\Delta_L|)^{d_L-1}\leq \exp\{ n!(nd)^{\exp O(r)}(h+1)\}.
\end{equation}
Further, by \eqref{bound_Thue s} and \eqref{bound_Thue_P_Q},
\[
(\log Q)^s\leq \exp\left\{ O\Big( d_L\frac{\log^* |f(\uv )|}{\log^*\log^* |f(\uv )|}
\cdot (\log d_L+\log^*\log^* |f(\uv )|)\Big)\right\}.
\]
Together with \eqref{bound_hfu},
this leads to
\begin{equation}\label{bound_RS}
R_S \leq |\Delta_L|^{1/2}(\log^* |\Delta_L|)^{d_L-1}(\log Q)^s\leq
\exp\{ n!(nd)^{\exp O(r)}(h+1)\}.
\end{equation}
Combining \eqref{bound_hLRL} with \eqref{bound_Thue Delta_L_II} and with $R_L > 0.2052$ (see Friedman \cite{Friedman1}) we get
\begin{equation}\label{bound_hLRL_2}
\max(h_L, R_L) \leq \exp\{ n!(nd)^{\exp O(r)}(h+1)\}.
\end{equation}
Finally, using $r_L<d_L\leq n!d^r$, we infer that
\begin{equation}\label{bound_c2}
c_2 \leq \exp O(d_L \log^* d_L)\leq \exp\{ n!(nd)^{\exp O(r)}\}.
\end{equation}
We now apply Proposition \ref{P_Thue_over_NF} to
equation \eqref{Thue_specialized}.
From the estimates \eqref{Au}, \eqref{Bu}, \eqref{bound_c1}, \eqref{bound_Thue_P_Q}, \eqref{bound_RS}, \eqref{bound_hLRL_2},
\eqref{bound_c2}, it follows that the upper bound in
Proposition \ref{P_Thue_over_NF} is a sum and product of terms,
which are all bounded above by $\exp\{ n! (nd)^{\exp O(r)}(h+1)\}$.
It follows that
\[
h\left(x^{(j)}(\uv)\right),h\left(y^{(j)}(\uv) \right)
\leq \exp\{ n! (nd)^{\exp O(r)}(h+1)\}.
\]
By inserting this into \eqref{hbar-bound}, we obtain
the upper bound \eqref{Thue_hbound_B} in Proposition \ref{P_Thue_over_B} for $q>0$.

Now assume $q=0$. In this case $K_0=\Q$, $A_0=\Z$ and $B=\Z[f^{-1},w]$, where $w$ is an algebraic integer with minimal polynomial
$\F(X)=X^D+\F_1X^{D-1}+\dots +\F_D\in \Z[X]$ over $\Q$, and $f$ is a non-zero rational integer. In view of Propositions \ref{P_newdomaingenerators} (i) and \ref{P_newdomaingenerators_spec_case}
we may assume that
$$
\log |f| \leq h_1^* \quad\quad \text{and} \quad\quad \log|\F_k| \leq h_0^* \quad\quad \text{for} \quad k=1, \dots, D,
$$
where $h_0^*, h_1^*$ satisfy \eqref{bound_d0*_h0*}. Denote by $w^{(1)}, \dots, w^{(D)}$ the conjugates of $w$, and let $K_j:=\Q(w^{(j)})$ for
$j:=1\kdots D$.
By a similar argument as in the proof of Lemma 5.5 of Evertse and Gy\H ory \cite{EGy9}, we have $|\Delta_{K_j}|\leq D^{2D-1}e^{(2D-2)h_0^*}$,
which is the estimate from Lemma \ref{L_bound_fielddisc_spec} with $q=0$
and $\max (1,|\uv |)$ replaced by $1$.
For $\al\in K$, we denote by $\al^{(j)}$ the conjugate
of $\al$ corresponding to $w^{(j)}$.

Instead of Lemma \ref{L_bound_hbar by_ha_of conjugates of specializations}
we use Lemma \ref{L_bound_hbar_for_q_eq_0}, applied with $G=\F$, $m=D$ and $\be^{(j)}=x^{(j)}$, resp. $y^{(j)}$. Inserting \eqref{bound_d0*_h0*},
this leads to an estimate
\begin{equation}\label{hbar-bound q=0}
\max (\oh (x),\oh (y))\leq (nd)^{\exp O(r)}\max_{1\leq j\leq D}
\max \big( h(x^{(j)}),h(y^{(j)})\big).
\end{equation}
We proceed further with the $j$ for which the maximum is assumed.

Now we can follow the argument for the case $q>0$,
except that in all estimates we have to take $q=0$,
and replace
$\max (1,|\uv |)$ by $1$,
$K_{\uv ,j}$ by $K_j$, $f(\uv )$ by $f$, $F_{\uv ,j}$ by $F^{(j)}$,
where $F^{(j)}$ is the binary form obtained by taking the $j$-th conjugates
of the coefficients of $F$, and $f(\uv )$ by $f$.
This leads to an estimate
\[
h((x^{(j)})),h((y^{(j)})
\leq \exp\{ n! (nd)^{\exp O(r)}(h+1)\},
\]
and combined with
\eqref{hbar-bound q=0} this gives again
\eqref{Thue_hbound_B}.
This completes the proof of Proposition \ref{P_Thue_over_B}.
\end{proof}

\subsection{Hyper- and superelliptic equations}

\subsubsection{Results in the number field case.}\label{S_NFcase}

Let $L$ be a number field, and denote as usual
by $d_L$, $\Delta_L$, $\OO_L$, $\M_L$ its degree, discriminant,
class number, regulator, ring of integers, and set of places.
Further, let $S$ be a finite set of places of $L$ containing all infinite places.
If $S$ consists only of the infinite places of $L$, put $P:=2, Q:=2$.
Otherwise,
denote by
$\fp_1, \dots, \fp_t$ the prime ideals corresponding to the finite places of $S$, and put
$$
P:=\max(N(\fp_1), \dots, N(\fp_t)), \quad\quad Q:=N(\fp_1 \dots \fp_t).
$$

Let
\begin{equation}\label{poly}
F(X)= a_0X^n+a_1X^{n-1}+\dots+a_n\in\OO_S[X]
\end{equation}
be a polynomial of degree $n\geq 2$ and of non-zero discriminant,
$\de\in \OO_S\setminus\{ 0\}$,
and $m$ a positive integer. Put
$$
\hh:=\sum_{v \in \M_L} d_v\log \max (1,|\de|_v,|a_0|_v, \dots ,|a_n|_v),
$$
where $d_v:=[L_v:\Qq_p]/[L:\Qq ]$, with $p\in\M_{\Qq}$ the place below $v$.

\begin{proposition}\label{P_super_NF}
Assume $n\geq 2$, $m\geq 3$. If $x,y\in \OO_S$ is a solution to the equation
\begin{equation}\label{super}
F(x)=\de y^m, \quad \quad \mbox{$x,y\in \OO_S$,}
\end{equation}
then
\begin{equation*}
h(x),h(y)  \leq c_3^{m^3}
|\Delta_L|^{2m^2n^2}Q^{3m^2n^2}e^{8m^2n^3d_L\widehat{h}},
\end{equation*}
where $c_3 := (6ns)^{14n^3s}$.
\end{proposition}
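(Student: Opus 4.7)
The plan is to reduce equation \eqref{super} to an $S$-unit-type equation over a suitable extension of $L$ and then invoke a Baker-type effective lower bound for linear forms in logarithms of algebraic numbers. The qualitative scheme is classical (going back to Baker, Coates, Sprind\v{z}uk, Brindza), so the real work is to keep explicit track of the dependence on each of $m$, $n$, $|\Delta_L|$, $Q$, $s$, $d_L$, and $\hh$, and to check that these propagate to the stated bound.

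First I would pass to the splitting field $M=L(\al_1,\ldots,\al_n)$ of $F$ over $L$, where $\al_1,\ldots,\al_n$ are the zeros of $F$, and let $T$ denote the set of places of $M$ lying above those in $S$. Lemma \ref{L_discriminant_divisibility} combined with Lemma \ref{L_discriminant_extension} bounds $|\Delta_M|$ in terms of $|\Delta_L|$ and $e^{\hh}$ with exponents depending only on $n$, while $[M:L]\leq n!$ gives $|T|\leq n!\,s$, and the $T$-regulator $R_T$ is controlled via the analogue of \eqref{R_S}. Next, working inside $\OO_T$, I would exploit the factorisation $a_0(x-\al_1)\cdots(x-\al_n)=\de y^m$. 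Because $F$ has non-vanishing discriminant, the factors $x-\al_i$ are pairwise coprime outside a fixed, controlled set of prime ideals (those dividing $D_F a_0$). Using the finiteness of the $T$-class group together with the finite generation of $\OO_T^*/(\OO_T^*)^m$, the standard argument produces, for each $i$, a decomposition $x-\al_i=\be_i\ga_i^m$ with $\be_i\in M^*$ of explicitly bounded height (polynomial in $|\Delta_M|$, $Q$, $e^{\hh}$, with $m$ entering through the sizes of coset representatives in an ideal class group and a unit group modulo $m$-th powers) and $\ga_i\in M^*$.

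Second, picking three distinct indices $i,j,k$ (available when $n\geq 3$) and applying the identity
\[
(\al_k-\al_j)(x-\al_i)+(\al_i-\al_k)(x-\al_j)+(\al_j-\al_i)(x-\al_k)=0
\]
converts the problem into a three-term equation $U+V+W=0$ in which each of $U,V,W$ is an $S$-unit times an $m$-th power. After suitable normalisation this becomes an $S$-unit equation to which a Baker-type estimate applies, yielding an upper bound for $h(\ga_i/\ga_j)$, hence for $h(\ga_i)$, hence for $h(x-\al_i)$, and so for $h(x)$. The case $n=2$ (still permitted since $m\geq 3$) is handled analogously with two factors: the two-factor analogue $\be_1\ga_1^m-\be_2\ga_2^m=\al_1-\al_2$ is a Thue equation of degree $m$ in $\ga_1,\ga_2$ to which the same Baker-type machinery applies. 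Once $h(x)$ is bounded, the bound on $h(y)$ follows from $m\,h(y)=h(\de^{-1}F(x))\leq n\,h(x)+\hh+O(\log n)$.

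The main obstacle is entirely quantitative. The factor $c_3^{m^3}=(6ns)^{14m^3n^3s}$ is expected to arise because the Baker bound for $S$-unit equations carries an essentially multiplicative dependence on $s$, on the degrees, and on the heights of the terms $\be_i\ga_i^m$; the presence of the $m$-th powers introduces one factor of $m$ through each occurrence of the height, another through the size of the Baker constants, and a third through the interplay with the coset representatives $\be_i$. The factors $|\Delta_L|^{2m^2n^2}$ and $Q^{3m^2n^2}$ come from bounds for these coset representatives together with the estimates for $|\Delta_M|$ and $|T|$, while $e^{8m^2n^3d_L\hh}$ reflects the $\hh$-dependence of both the $\be_i$ and the Baker constants. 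Matching the exponents precisely, rather than merely qualitatively, is the bulk of the work; the underlying reduction to $S$-unit equations is transparent.
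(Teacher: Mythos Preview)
The paper does not actually prove this proposition: its entire proof is the single sentence ``This is Theorem 2.1 in \cite{BA18}.'' In other words, the result is quoted from B\'erczes--Evertse--Gy\H{o}ry, \emph{Effective results for hyper- and superelliptic equations over number fields}, and no argument is given here.

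Your sketch is a reasonable outline of the classical route that \cite{BA18} presumably follows (splitting field, decomposition $x-\al_i=\be_i\ga_i^m$ via class-group and unit-group coset representatives, Siegel's identity reducing to an $S$-unit equation or, for $n=2$, a Thue equation of degree $m$, then Baker-type bounds). But there is nothing in the present paper to compare it against; the proposition is imported wholesale. If your goal is to reproduce the paper's proof, the correct answer is simply to cite \cite{BA18}. If your goal is to give a self-contained argument, then what you wrote is a plausible plan, though---as you yourself note---matching the precise exponents $c_3^{m^3}$, $|\Delta_L|^{2m^2n^2}$, $Q^{3m^2n^2}$, $e^{8m^2n^3d_L\hh}$ requires carrying out the full computation of \cite{BA18}, which is well beyond a sketch.
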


\begin{proof}
This is Theorem 2.1 in \cite{BA18}.
\end{proof}

\begin{proposition}\label{P_hyper_NF}
Let $n\geq 3$. If $x,y\in \OO_S$ is a solution to
\begin{equation}\label{hyper}
F(x)=\de y^2, \quad \quad \mbox{$x,y\in \OO_S$,}
\end{equation}
then
\begin{equation*}
h(x), h(y) \leq
c_4 |\Delta_L|^{8n^3}Q^{20n^3}e^{50n^4d_L\widehat{h}},
\end{equation*}
where $c_4 :=(4ns)^{212n^4s}$.
\end{proposition}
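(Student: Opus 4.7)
The plan is to reduce the hyperelliptic equation $F(x)=\de y^2$ to a family of two‐variable $S$‐unit equations over a controlled extension of $L$, and then invoke Baker‐type quantitative bounds for such equations (for instance Gy\H{o}ry--Yu \cite{GyYu1}), carefully tracking the degrees, discriminants and sets of places through each field extension. The overall strategy is classical and goes back to Siegel; the point here is to make every constant explicit so that the final estimate fits into the clean form $c_4|\Delta_L|^{8n^3}Q^{20n^3}e^{50n^4d_L\widehat{h}}$ with $c_4=(4ns)^{212n^4s}$.

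First I would pass to the splitting field $L':=L(\alpha_1,\dots,\alpha_n)$ of $F$ over $L$, where $\alpha_1,\dots,\alpha_n$ are the roots of $F$, and let $S'$ be obtained from $S$ by adding all places of $L'$ above $S$ together with the finitely many places dividing $a_0 D_F$. Lemma~\ref{L_discriminant_divisibility} and Lemma~\ref{L_discriminant_extension} then give explicit bounds on $d_{L'}$, $|\Delta_{L'}|$ and $|S'|$ in terms of $d_L,|\Delta_L|,n,\widehat{h}$. Setting $\beta_i:=a_0x-a_0\alpha_i\in\OO_{S'}$ (integral since $a_0\alpha_i$ is always integral over $\OO_L$), the equation becomes $\prod_{i=1}^n\beta_i=a_0^{n-1}\de y^2$. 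Because $\beta_i-\beta_j=a_0(\alpha_j-\alpha_i)\neq 0$ and involves only primes in $S'$, the $\beta_i$ are pairwise coprime outside $S'$; hence each $\beta_i$ has the shape $\gamma_i b_i^{2}$ with $b_i\in\OO_{S'}$ and $\gamma_i$ drawn from a finite system of representatives of $L'^{*}/L'^{*2}\OO_{S'}^{*2}$ of explicitly bounded height.

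Next, using $n\geq 3$, I would fix three indices, adjoin $\sqrt{\gamma_1},\sqrt{\gamma_2},\sqrt{\gamma_3}$ to $L'$ to obtain an extension $L''$ with $[L'':L']\leq 8$, and let $S''$ be the set of places of $L''$ above $S'$. The identity $\beta_1-\beta_2=a_0(\alpha_2-\alpha_1)$ factors in $L''$ as
\[
(b_1\sqrt{\gamma_1}-b_2\sqrt{\gamma_2})(b_1\sqrt{\gamma_1}+b_2\sqrt{\gamma_2})=a_0(\alpha_2-\alpha_1),
\]
and combined with the analogous identity for $\beta_1-\beta_3$, a Siegel‐style rearrangement yields a two‐variable $S''$‐unit equation $u+v=1$ with $u,v\in\OO_{S''}^{*}$. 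Applying the Gy\H{o}ry--Yu bound to this equation controls $h(b_i\sqrt{\gamma_i})$, hence $h(\beta_i)$, hence $h(x)$, in terms of $d_{L''}$, $|\Delta_{L''}|$, $Q''$ and $|S''|$. The estimate for $h(y)$ then follows at once from $2h(y)=h(y^2)\leq h(\de^{-1})+h(F)+nh(x)+O(n\log n)$.

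The main obstacle is the explicit constant tracking. One must verify that the passage $L\rightsquigarrow L''$ multiplies $d_L$ by at most $8\cdot n!$, inflates $|\Delta_L|$ to at most $|\Delta_L|^{[L'':L]}e^{O(n^2 d_L\widehat{h})}$ via Lemma~\ref{L_discriminant_extension}, and enlarges $Q$ only by a factor polynomial in $n,\widehat{h},d_L$; then, after substitution into the $S$‐unit estimate and after crude simplifications like $s\log s\leq (ns)^2$, the final bound must collapse to precisely the stated exponents $8n^3,20n^3,50n^4$ rather than larger polynomials in $n$. Making the bookkeeping tight enough to produce these particular exponents (and the particular shape $(4ns)^{212n^4 s}$ of $c_4$) is the delicate part of the argument, which is why this estimate is packaged as a standalone theorem in \cite{BA18} and simply quoted here.
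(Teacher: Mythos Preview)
The paper's proof of this proposition is a single line: ``This is Theorem 2.2 in \cite{BA18}.'' Your closing sentence already anticipates exactly this, so your proposal and the paper agree on what actually happens here---the result is imported wholesale from the companion paper.

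Your sketch of the underlying argument (pass to the splitting field, write $a_0x-a_0\alpha_i=\gamma_i b_i^2$ up to $S'$-units, adjoin square roots of three of the $\gamma_i$, and produce an $S''$-unit equation via Siegel's identity) is indeed the classical route and is essentially what \cite{BA18} does. The only caveat is that your remark about the exponents $8n^3$, $20n^3$, $50n^4$ being ``tight'' is slightly misleading: these constants are not forced by the method but are simply what the computation in \cite{BA18} happens to yield after their particular choices of auxiliary estimates; a different packaging of the same argument could give somewhat different polynomial exponents. For the purposes of the present paper none of this matters, since only the shape $\exp\{(nd)^{\exp O(r)}(h+1)\}$ survives into the final bounds.
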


\begin{proof}
This is Theorem 2.2 in \cite{BA18}.
\end{proof}

\begin{proposition}\label{P_ST}
Let $n\geq 2$.
If $x,y,m$ is a solution to
\begin{equation*}
F(x)=\de y^m, \quad\quad\mbox{$x,y\in\OO_S$, $m\in\Zz_{\geq 2}$,}
\end{equation*}
such that $y\not= 0$ and $y$ is not a root of unity, then
\begin{equation*}
m \leq c_5 |\Delta_L|^{6n}P^{n^2}e^{11nd_L\widehat{h}},
\end{equation*}
where $c_5:= (10n^2s)^{40ns}$.
\end{proposition}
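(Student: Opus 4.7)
The plan is to reduce Proposition~\ref{P_ST} to an effective bound for $S$-unit equations in a suitable finite extension of $L$, adapting the classical Schinzel--Tijdeman argument to the $S$-integer setting used for Propositions~\ref{P_super_NF} and~\ref{P_hyper_NF}. Since $D_F\ne 0$ and $n\ge 2$, the polynomial $F$ has two distinct roots $\alpha_1,\alpha_2$ in some extension $L_1$ of $L$ of degree at most $n(n-1)$. Let $S_1$ be the set of places of $L_1$ lying above $S$, augmented by all places at which $a_0$, $D_F$, $\delta$ or $\alpha_1-\alpha_2$ has non-trivial valuation. By Lemmas~\ref{L_discriminant_divisibility} and~\ref{L_discriminant_extension}, the degree, discriminant and cardinality of $S_1$ are controlled in terms of $n$, $s$, $d_L$, $|\Delta_L|$ and $\widehat h$, and $x,y,\alpha_1,\alpha_2\in\OO_{S_1}$.

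In $\OO_{S_1}$ the fractional ideals $(x-\alpha_i)$ are pairwise coprime, since their pairwise differences $\alpha_j-\alpha_i$ are $S_1$-units by construction, and $\prod_{i=1}^n(x-\alpha_i)=(\delta/a_0)\cdot y^m$ with $\delta/a_0\in\OO_{S_1}^\ast$. Hence each ideal $(x-\alpha_i)$ is itself an $m$-th power of a fractional ideal. Using the finiteness of the class group of $\OO_{S_1}$ and of $\OO_{S_1}^\ast/(\OO_{S_1}^\ast)^m$, I would select $\gamma_1,\gamma_2$ from a finite system of coset representatives of effectively bounded $S_1$-height and write
\[
x-\alpha_1=\gamma_1 y_1^m,\qquad x-\alpha_2=\gamma_2 y_2^m\qquad(y_1,y_2\in L_1).
\]
Subtracting these equalities and dividing by $\alpha_2-\alpha_1$ yields the $S_1$-unit equation
\[
\frac{\gamma_1}{\alpha_2-\alpha_1}\,y_1^m+\frac{\gamma_2}{\alpha_1-\alpha_2}\,y_2^m=1.
\]

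Applying an effective Baker-type upper bound for the heights of the summands in an $S_1$-unit equation (via complex and $p$-adic linear forms in logarithms, in the Gy\H{o}ry--Yu style) gives an estimate of the form $m\cdot h(y_i)\le B$, with $B$ polynomial in $|\Delta_{L_1}|$, $P$, $d_{L_1}$ and $\widehat h$. Since $y$ is non-zero and not a root of unity, its height admits a positive lower bound: either $y\in\OO_{S_1}^\ast$ is a unit and a Voutier/Dobrowolski-type inequality gives $h(y)\ge c/d_{L_1}$ with $c>0$ explicit, or $y\notin\OO_{S_1}^\ast$ has positive valuation $v(y)>0$ at some $v\notin S_1$, in which case $m\cdot v(y)=v(F(x)/\delta)$ is bounded directly in terms of $h(x)$ and the coefficients of $F$. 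Dividing the upper bound on $m\cdot h(y_i)$ by this lower bound on $h(y)$, and noting that $h(y_i)$ differs from $h(y)$ by only $O(h(\gamma_i)/m)$, produces an upper bound for $m$. The main obstacle will be shepherding the constants so that the final exponents match exactly $6n$, $n^2$ and $11n$ on $|\Delta_L|$, $P$ and $e^{d_L\widehat h}$, and so that $c_5$ grows no worse than $(10n^2s)^{40ns}$; this requires both a careful choice of $L_1$ and of the representatives $\gamma_i$, and an application of the $S$-unit estimate in a form sharp enough in the number of places and the degree.
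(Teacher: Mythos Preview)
In the paper this proposition is not proved at all: it is quoted verbatim as Theorem~2.3 of \cite{BA18}, so the ``proof'' consists of a single citation. Your proposal is therefore not comparable to the paper's argument; it is an outline of how the result of \cite{BA18} itself might be established.

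As a sketch of that underlying result your strategy is broadly the right one (it is the Schinzel--Tijdeman argument transported to $\OO_S$), but two steps are imprecise. First, the displayed relation
\[
\frac{\gamma_1}{\alpha_2-\alpha_1}\,y_1^m+\frac{\gamma_2}{\alpha_1-\alpha_2}\,y_2^m=1
\]
is \emph{not} an $S_1$-unit equation, because $y_1,y_2$ have no reason to be $S_1$-units; the effective input here is a direct lower bound for a linear form in (complex and $p$-adic) logarithms applied to $\gamma_1y_1^m-\gamma_2y_2^m=\alpha_2-\alpha_1$, not the $S$-unit equation theorem. Second, your passage from a lower bound for $h(y)$ to a lower bound for $h(y_i)$ is not justified: the relation between $y$ and the $y_i$ is multiplicative through $\delta y^m=a_0\prod_i\gamma_iy_i^m$ (up to $S_1$-units), so what one actually uses is that $y$ not being a root of unity forces at least one $y_i$ (or the ratio $y_1/y_2$) to be non-torsion, and then a Dobrowolski/Voutier lower bound on that quantity feeds into the linear-forms estimate. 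With these corrections the outline matches the method of \cite{BA18}; tracking the explicit exponents $6n$, $n^2$, $11n$ and the constant $(10n^2s)^{40ns}$ is exactly the content of that paper and is not reproduced here.
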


\begin{proof}
This is Theorem 2.3 in \cite{BA18}.
\end{proof}

\subsubsection{Concluding the proofs of Propositions \ref{P_hypersuper_over_B}
and \ref{P_ST_over_B}}

\begin{proof}[Proof of \eqref{hypersuper_hbound_B} in Proposition \ref{P_hypersuper_over_B}]
The computations will be similar to those in the proof of
\eqref{Thue_hbound_B} in Proposition \ref{P_Thue_over_B} but with some
simplifications.

First we suppose $q>0$.
Take a solution $x,y$ of \eqref{hypersuper_over_B} in $B$. We use again the polynomial
$\HH:=\Delta_{\F} \cdot \F_D \cdot f$ from
Section \ref{S_specializations}.
Take again $\uv\in\Zz^q$ with $\HH (\uv )\ne 0$,
choose $j\in \{ 1, \dots ,D \}$, and denote by $F_{\uv,j}$, $\de^{(j)}(\uv)$, $x^{(j)}(\uv)$, $y^{(j)}(\uv)$, the images of $F,\de,x,y$ under
the specialization $\vp_{\uv,j}$.
In contrast to our argument for Thue equations, we do not have to deal with
the splitting field of $F$ now. So we take for $S$ the set of places
of $K_{\uv ,j}$, consisting of all infinite places,
and all finite places lying above the rational prime divisors of $f(\uv )$.
Then $\vp_{\uv ,j}(B)\subseteq \OO_S$, and
\begin{equation}\label{hypersuper_specialized}
F_{\uv ,j}(x^{(j)}(\uv )) =\de^{(j)}(\uv )y^{(j)}(\uv )^m,
\ \ \x^{(j)}(\uv ),\, y^{(j)}(\uv )\in\OO_S.
\end{equation}
Note that by
the choice of $\HH$ and $\HH(\uv)\ne 0$ we have
$\de_j(\uv )\ne 0$ and $F_{\uv ,j}$ has non-zero discriminant.
So $F_{\uv ,j}$ has the same number of zeros and degree as $F$,
that is, the degree of $F_{\uv ,j}$ is $n\geq 2$ if $m\geq 3$ and $n\geq 3$
if $m=2$. Hence Propositions \ref{P_super_NF} and \ref{P_hyper_NF}
are applicable.

By precisely the same argument as in the case for Thue equations,
there are $\uv\in\Zz^q$ and $j\in\{ 1\kdots D\}$ with
\eqref{u-bound} and \eqref{hbar-bound}. We proceed further with this $\uv$, $j$.

We estimate the parameters corresponding to those in the bounds
from Propositions \ref{P_super_NF}, \ref{P_hyper_NF}.
First, we get precisely the same estimates as in \eqref{Au} and \eqref{Bu}.
These imply
\begin{equation}\label{hhat-bound}
\widehat{h}\leq (n+1)h(F_{\uv ,j})+h(\de^{(j)}(\uv ))\leq (nd)^{\exp O(r)}(h+1).
\end{equation}
Further we have, similarly to \eqref{DiscKuj-bound},
\begin{equation}\label{DiscKuj_super}
|\Delta_{K_{\uv ,j}}|\leq \exp\{ (nd)^{\exp O(r)}(h+1)\}.
\end{equation}
Next, similar to \eqref{bound_hfu},
\begin{equation}\label{bound_hfu_super}
|f(\uv )|\leq \exp\{ (nd)^{\exp O(r)}(h+1)\}.
\end{equation}
The set $S$ now consists of places of $K_{\uv ,j}$ instead
of the splitting field of $F_{\uv ,j}$ over $K$.
So since $[K_{\uv ,j}:\Qq ] \leq D$ we now have $s\leq D(1+\omega )$,
where $\omega$ is the number of distinct prime divisors of $f(\uv )$.
This gives, instead of \eqref{bound_Thue s},
\begin{equation}\label{bound_super s}
s=O\left( D\log^* |f(\uv )|/\log^*\log^* |f(\uv )|\right).
\end{equation}
By inserting \eqref{bound_hfu_super}, and $D\leq d^r$, we obtain
for the quantities $c_3,c_4$ in Propositions \ref{P_super_NF}
and \ref{P_hyper_NF} the upper bounds
\begin{equation}\label{c3-c4}
c_3,\, c_4 \, \leq \exp\{ (nd)^{\exp O(r)}(h+1)\}.
\end{equation}
Lastly, we have instead of \eqref{bound_Thue_P_Q},
\begin{equation}\label{bound_super_P_Q}
P\leq Q\leq |f(\uv )|^D\leq \exp\{ (nd)^{\exp O(r)}(h+1)\},
\end{equation}
where we have used \eqref{bound_hfu_super} and $D\leq d^r$.

We now apply Propositions \ref{P_super_NF} and \ref{P_hyper_NF}
to \eqref{hypersuper_specialized}. Note that we have to take
$L=K_{\uv ,j}$; so $d_L\leq D\leq d^r$. By inserting this and
\eqref{hhat-bound}, \eqref{DiscKuj_super},
\eqref{c3-c4}, \eqref{bound_super_P_Q} into the upper bounds
from these Propositions, we obtain
\begin{equation}\label{xuj-yuj-super}
h(x^{(j)}(\uv )),\, h(y^{(j)}(\uv ))\, \leq \exp\{ m^3 (nd)^{\exp O(r)}(h+1)\}.
\end{equation}
By inserting this into \eqref{hbar-bound},
we obtain \eqref{hypersuper_hbound_B} in the case $q>0$.

Now let $q=0$. For $\alpha\in K$, write $\alpha^{(j)}$ for the conjugate
of $\alpha$ corresponding to $w^{(j)}$, and let $F^{(j)}$ be the
polynomial obtained by taking the $j$-th conjugates of the
coefficients of $F$. We simply have to follow the above arguments,
replacing everywhere $q$ by $0$, $\max (1,|\uv |)$ by $1$,
$K_{\uv ,j}$ by $K^{(j)}=\Qq (w^{(j)})$, $F_{\uv ,j}$ by $F^{(j)}$,
$x^{(j)}(\uv )$, $y^{(j)}(\uv )$ by $x^{(j)}$, $y^{(j)}$,
and $f(\uv )$ by $f\in\Zz$. Instead of \eqref{hbar-bound}
we have to use \eqref{hbar-bound q=0}. Thus, we obtain
the same estimate as \eqref{xuj-yuj-super},
but with $x^{(j)}$, $y^{(j)}$ instead of $x_j(\uv )$, $y_j(\uv )$.
Via \eqref{hbar-bound q=0} we obtain
\eqref{hypersuper_hbound_B} in the case $q=0$.
This completes our proof of Proposition
\ref{P_hypersuper_over_B}.
\end{proof}

\begin{proof}[Proof of Proposition \ref{P_ST_over_B}]
Assume for the moment $q>0$.
Let $x\in B$, $y\in B\cap\oQ$, $m\in\Zz_{\geq 2}$ be a solution
of \eqref{hypersuper_over_B},
such that $y\not= 0$ and $y$ is not a root of unity.
Choose again $\uv$, $j$
with \eqref{u-bound}, \eqref{hbar-bound}.
Note that $y^{(j)}(\uv )$ is a conjugate of $y$ since $y\in\oQ$;
hence it is not $0$ or a root of unity.

We apply Proposition \ref{P_ST} to \eqref{hypersuper_specialized}.
By \eqref{bound_hfu_super}, \eqref{bound_super s},
we have for the constant $c_5$ in Proposition \ref{P_ST}, that
\[
c_5\leq \exp\{ (nd)^{\exp O(r)}(h+1)\}.
\]
Further, we have the upper bounds \eqref{hhat-bound} for
$\widehat{h}$, \eqref{DiscKuj_super} for $|\Delta_{K_{\uv ,j}}|$,
and \eqref{bound_super_P_Q} for $P$. By inserting these estimates
into the upper bound for $m$ from Proposition \ref{P_ST},
we obtain $m\leq \exp\{ (nd)^{\exp O(r)}(h+1)\}$.
In the case $q=0$, we obtain the same estimate,
by making the same modifications as in the proof
of Proposition \ref{P_hypersuper_over_B}.
This finishes our proof of Proposition \ref{P_ST_over_B}.
\end{proof}

\end{document}